\renewcommand\section{\@startsection{section}{1}{0mm}{-1.5\baselineskip}{\baselineskip}{\normalsize\bfseries\sffamily}}
\renewcommand\subsection{\@startsection{subsection}{1}{0mm}{-\baselineskip}{\baselineskip}{\normalsize\bfseries\sffamily}}
\def\@fnsymbol#1{\ensuremath{\ifcase#1\or *\or **\or \dagger\or \ddagger\or
   \mathsection\or \mathparagraph\or \|\or \dagger\dagger
   \or \ddagger\ddagger \else\@ctrerr\fi}}
\newlength{\preskip}
\newlength{\postskip}
\newtheoremstyle{theorem}{\preskip}{\postskip}{\itshape}{}{\bfseries}{}
{.5em}{\textbf{\thmname{#1}\thmnumber{ #2} (\thmnote{ #3})}}
\newtheoremstyle{definition}{\preskip}{\postskip}{\normalfont}{0pt}{\bfseries}{}{.5em}{}
\newtheoremstyle{remark}{\preskip}{\postskip}{\normalfont}{0pt}{\bfseries}{}{.5em}{}
\theoremstyle{theorem} \newtheorem{thm}{Theorem}[section]
\theoremstyle{theorem} \newtheorem{lem}[thm]{Lemma}
\theoremstyle{theorem} \newtheorem{prop}[thm]{Proposition}
\theoremstyle{theorem} \newtheorem{kor}[thm]{Corollary}
\theoremstyle{definition} 
\theoremstyle{remark} 
\theoremstyle{remark} 
\theoremstyle{definition} 
\theoremstyle{definition} \newtheorem*{ack}{Acknowledgements}
\theoremstyle{remark} \newtheorem{bem}[thm]{Remark}
\theoremstyle{remark} \newtheorem{bems}[thm]{Remarks}
\theoremstyle{definition}  \newtheorem{bsp}[thm]{Example}
\theoremstyle{definition}  
\theoremstyle{definition} \newtheorem*{thmo}{Theorem}
\DeclareMathOperator \id {id}
\DeclareMathOperator \spt {supp}
\newcommand{\I}{\mathds{1}}
\newcommand\fa{\qquad \text{for all \ }}
\newcommand{\cadlag}{c\`adl\`ag }
\newcommand\mc[1] {\mathcal{#1}}
\newcommand\mbb[1] {\mathds{#1}}
\author{%
    Franziska K\"{u}hn\thanks{Institut f\"ur Mathematische Stochastik, Fachrichtung Mathematik, Technische Universit\"at Dresden, 01062 Dresden, Germany, \texttt{franziska.kuehn1@tu-dresden.de}} 
}
\title{Random time changes of Feller processes}
\date{}
\begin{document}

\maketitle

\abstract{\noindent We show that the SDE $dX_t = \sigma(X_{t-}) \, dL_t$, $X_0 \sim \mu$ driven by a one-dimensional symnmetric $\alpha$-stable L\'evy process $(L_t)_{t \geq 0}$, $\alpha \in (0,2]$, has a unique weak solution for any continuous function $\sigma: \mathbb{R} \to (0,\infty)$ which grows at most linearly. Our approach relies on random time changes of Feller processes. We study under which assumptions the random-time change of a Feller process is a conservative $C_b$-Feller process and prove the existence of a class of Feller processes with decomposable symbols. In particular, we establish new existence results for Feller processes with unbounded coefficients. As a by-product, we obtain a sufficient condition in terms of the symbol of a Feller process $(X_t)_{t \geq 0}$ for the perpetual integral $\int_{(0,\infty)} f(X_{s}) \, ds$ to be infinite almost surely. \par \medskip

\noindent\emph{Keywords:} conservativeness, decomposable symbol, Feller process, random time change, stochastic differential equation, unbounded coefficients. \par \medskip

\noindent\emph{MSC 2010:} Primary: 60J25. Secondary: 60H10, 60G51, 60J75, 60J35, 60G44.
}

\section{Introduction} \label{intro}

Kallenberg \cite{kallenberg} showed that a solution to the SDE \begin{equation}
	dX_t = \sigma(X_{t-}) \, dL_t, \qquad X_0 \sim \mu, \label{intro-eq1}
\end{equation}
driven by a one-dimensional symmetric $\alpha$-stable L\'evy process $(L_t)_{t \geq 0}$ can be written as a random time change of a symmetric $\alpha$-stable Lévy process. Using this idea, Zanzotto \cite{zan02} obtained a necessary and sufficient condition for the SDE \eqref{intro-eq1} to have a unique weak solution. However, his proof relies on local times and is therefore restricted to $\alpha \in (1,2]$. For $\alpha \in (0,1]$ the existence of a unique weak solution to \eqref{intro-eq1} is less well understood. It is known that a weak unique solution exists if $\sigma$ is H\"older continuous, bounded and bounded away from $0$, see Kulik \cite{kulik} and Mikulevicius \& Pragarauskas \cite{mik14}. Moreover, there are several results under the rather restrictive assumption that $x \mapsto x + \sigma(x) u$ is non-decreasing for all $u \in (-1,1)$, see e.\,g.\ \cite{li12,zhao08} and the references therein. Let us mention that the statement ``\emph{For $\alpha \in (0,1)$ the SDE has a pathwise unique (hence weak unique) solution for any bounded continuous function $\sigma$ which is bounded away from $0$}'', which can be found in Komatsu \cite{kom82} and Bass \cite{bass03}, is \emph{wrong}, see \cite{bass04} for a counterexample. In this paper we will show the following result which is new for $\alpha \in (0,1]$.

\begin{thmo}
Let $(L_t)_{t \geq 0}$ be a one-dimensional symmetric $\alpha$-stable L\'evy process, $\alpha \in (0,2]$. For any continuous function $\sigma: \mbb{R} \to (0,\infty)$  which grows at most linearly there exists a unique weak solution to the SDE \begin{equation*}
	dX_t = \sigma(X_{t-}) \, dL_t, \qquad X_0 \sim \mu, 
\end{equation*}
for any initial distribution $\mu$.  
\end{thmo}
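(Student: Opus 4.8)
The plan is to realise the solution as a random time change of the driving stable process $(L_t)_{t\ge 0}$ and to extract both existence and uniqueness from this representation, using the results proved above on random time changes of Feller processes, on Feller processes with decomposable symbols and unbounded coefficients, and on perpetual integrals. Write $\psi(\xi)=c\,|\xi|^\alpha$ for the characteristic exponent of $(L_t)_{t\ge 0}$. A Feller solution of the SDE must carry the \emph{decomposable} symbol $q(x,\xi)=\sigma(x)^\alpha\psi(\xi)$, since $X_t\approx x+\sigma(x)(L_t-L_0)$ gives $\mbb{E}^x e^{i\xi X_t}\approx e^{i\xi x}e^{-t\psi(\sigma(x)\xi)}$. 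To build such a process, observe that $\sigma^{-\alpha}$ is continuous, strictly positive and locally bounded, so $A_s:=\int_0^s\sigma(L_r)^{-\alpha}\,dr$ is a continuous, strictly increasing additive functional of $(L_t)_{t\ge 0}$; let $\tau$ be its generalised inverse and set $X_t:=L_{\tau_t}$. Since the at most linear growth of $\sigma$ makes $\sigma^\alpha$ of at most polynomial growth, the results above on random time changes of Feller processes and on Feller processes with decomposable symbols apply; they show that, once one verifies $A_\infty=\infty$ almost surely (done next), the process $(X_t)_{t\ge 0}$ is a \emph{conservative} $C_b$-Feller process with symbol $q$.

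So it remains to check $A_\infty=\int_{(0,\infty)}\sigma(L_r)^{-\alpha}\,dr=\infty$ almost surely. This is exactly the situation covered by the symbol criterion for perpetual integrals proved above, applied with $f:=\sigma^{-\alpha}$: the linear growth of $\sigma$ gives the lower bound $f(x)\ge c'(1+|x|)^{-\alpha}$, and feeding this together with the symbol $\psi(\xi)=c|\xi|^\alpha$ of $(L_t)_{t\ge 0}$ into the criterion yields $\int_{(0,\infty)}f(L_r)\,dr=\infty$ a.s. For $\alpha\ge 1$ the process $L$ is recurrent and this is elementary; the real content is the transient regime $\alpha<1$, where $|L_r|$ grows like $r^{1/\alpha}$ so that $(1+|L_r|)^{-\alpha}$ is of order $r^{-1}$ and the integral still diverges --- which is precisely why linear growth is the borderline hypothesis. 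Hence $X$ is a conservative $C_b$-Feller process with symbol $q$, and $\mbb{P}^\mu:=\int\mbb{P}^x\,\mu(dx)$ fixes the initial distribution.

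To produce the driving process, set $\bar L_t:=\int_0^t\sigma(X_{s-})^{-1}\,dX_s$, a well-defined stochastic integral because $\sigma$ is strictly positive and $X$ is a semimartingale. Using $X=L_{\tau}$ and the commutation of stochastic integration with time change, $\bar L$ equals the time change by $\tau$ of $\int_0^\cdot\sigma(L_{r-})^{-1}\,dL_r$; a computation of the differential characteristics of $\bar L$ shows that the factor $\sigma(X_{s-})^{-\alpha}$ produced by scaling the jumps of $X$ by $\sigma(X_{s-})^{-1}$ is cancelled by the time-change Jacobian $\frac{d}{dt}\tau_t=\sigma(X_t)^\alpha$, leaving constant, deterministic characteristics equal to those of $(L_t)_{t\ge 0}$; hence $\bar L$ is itself a symmetric $\alpha$-stable L\'evy process. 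Since $\int_0^t\sigma(X_{s-})\,d\bar L_s=X_t-X_0$ by construction, $(X,\bar L)$ solves the SDE under $\mbb{P}^\mu$, and any initial law $\mu$ is admissible.

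For uniqueness, let $(X,\tilde L)$ be any weak solution with $X_0\sim\mu$; then $X$ is a semimartingale whose differential characteristics are, by $dX_t=\sigma(X_{t-})\,d\tilde L_t$, those obtained from the $\alpha$-stable ones by the spatial scaling $y\mapsto\sigma(X_{t-})\,y$, so that its symbol is $\sigma(x)^\alpha\psi(\xi)$. One first checks $\int_0^\infty\sigma(X_s)^\alpha\,ds=\infty$ a.s.\ --- otherwise $X_s$ would converge as $s\to\infty$, forcing $\sigma(X_s)$ to stay bounded away from $0$, a contradiction --- so the inverse $\tilde\tau$ of $t\mapsto\int_0^t\sigma(X_s)^\alpha\,ds$ is defined on all of $[0,\infty)$; dividing the differential characteristics of $X$ by $\sigma(X_{\tilde\tau_t})^\alpha$ leaves the constant $\alpha$-stable characteristics, so $Z_t:=X_{\tilde\tau_t}$ is a symmetric $\alpha$-stable L\'evy process started from $\mu$, and in particular its law is determined by $\mu$. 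Since $X=Z_{\tau'}$ with $\tau'$ the generalised inverse of $\int_0^\cdot\sigma(Z_r)^{-\alpha}\,dr$ --- a measurable functional of $Z$ which, by the perpetual-integral argument applied to $Z$, is a.s.\ defined on all of $[0,\infty)$ --- the law of $X$ is determined by $\mu$, which is the asserted weak uniqueness. The conceptual key is the perpetual-integral estimate, which isolates linear growth as the sharp hypothesis and is proved separately; within the argument above I expect the main obstacles to be the bookkeeping of differential characteristics under the time change, uniformly in $\alpha\in(0,2]$ and with the usual truncation subtlety at $\alpha=1$, and the globalisation used in the uniqueness step --- it is the latter that lets the proof cover all $\alpha\in(0,2]$ rather than only $\alpha\in(1,2]$ as in Zanzotto's local-time approach.
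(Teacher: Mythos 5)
Your proposal is correct in outline, but it follows a genuinely different route from the paper. The paper never time-changes the driving process $L$ itself: it constructs a conservative rich Feller process with symbol $|\sigma(x)|^{\alpha}|\xi|^{\alpha}$ via Corollary~\ref{time-17} (time change combined with truncation of large jumps and a bounded perturbation, Theorem~\ref{time-13}), proves well-posedness of the $(A,C_c^{\infty}(\mbb{R}))$-martingale problem by truncating $\varphi=|\sigma|^{\alpha}$ to bounded $\varphi_k$ and using the core statement of Theorem~\ref{time-3} together with Hoh's localization theorem (Proposition~\ref{app-3}), and then transfers this to the SDE through Kurtz's equivalence of martingale problems and weak solutions. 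You instead run the Kallenberg--Zanzotto time-change representation in both directions, existence by time-changing $L$ with the inverse of $\int_0^{\cdot}\sigma(L_r)^{-\alpha}\,dr$ and recovering a driving stable process by stochastic calculus, uniqueness by time-changing an arbitrary weak solution back into a stable process and noting that the solution is a fixed measurable functional of it; the only ingredient you import is Corollary~\ref{time-6}, which makes both time changes globally defined and is indeed where linear growth and strict positivity of $\sigma$ replace the local-time arguments that confine Zanzotto to $\alpha\in(1,2]$. Your route is more elementary and self-contained on the SDE side, but it does not identify the solution as a rich Feller process nor give well-posedness of the martingale problem, which the paper obtains as part of the construction. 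Two steps in your sketch need genuine proofs rather than one-line claims: first, the assertion that every weak solution satisfies $\int_0^{\infty}\sigma(X_s)^{\alpha}\,ds=\infty$ a.s.\ is circular as written (``otherwise $X_s$ would converge''), since the convergence of $X_s$ on $\{T_{\infty}<\infty\}$ is exactly the existence of the left limit of the time-changed process at $T_{\infty}$; one must first perform the time change on the stochastic interval $[0,T_{\infty})$, then argue on $\{T_{\infty}\leq K\}$ that there are only finitely many large jumps and that the small-jump part is an $L^{2}$-bounded martingale, and only then deduce convergence of $X_s$ and the contradiction. Second, the bookkeeping of semimartingale characteristics under the continuous time change must be carried out with respect to the time-changed filtration (Jacod--Shiryaev), with a symmetric truncation so that no drift term appears for any $\alpha\in(0,2]$. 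Finally, note that Theorem~\ref{time-5} by itself yields only a conservative $C_b$-Feller process and says nothing about its symbol; the symbol claim in your existence paragraph would require Corollary~\ref{time-17}, but since you verify directly that $(X,\bar L)$ solves the SDE, that claim is not actually needed for the statement at hand.
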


For the particular case that $\sigma$ is bounded and Lipschitz continuous, it is known that the unique solution to the SDE is a Feller process with symbol $p(x,\xi) = |\sigma(x)|^{\alpha} \, |\xi|^{\alpha}$, cf.\ Schilling \& Schnurr \cite{schnurr} or \cite{sde}. The idea of the proof of the above theorem is to to construct a Feller process with symbol $p(x,\xi) = |\sigma(x)|^{\alpha} \, |\xi|^{\alpha}$, $x,\xi \in \mbb{R}$, and then to show that the Feller process is the unique weak solution to the SDE. \par
This leads us to the more general question under which assumptions there exists a Feller process with symbol \begin{equation*}
	p(x,\xi) = \varphi(x) q(x,\xi), \qquad x,\xi \in \mbb{R}^d,
\end{equation*}
where $\varphi: \mbb{R}^d \to (0,\infty)$ is a deterministic function and $q(x,\xi)$ the symbol of a Feller process, cf.\ Section~\ref{def}. As we will see in Section~\ref{time}, such symbols are closely related with random time changes of Feller processes. If $q=q(\xi)$ does not depend on $x$, i.\,e.\ $q$ is the characteristic exponent of a L\'evy process, then $p(x,\xi) = \varphi(x) q(\xi)$ is a particular case of a so-called decomposable symbol. We will establish sufficient conditions on $\varphi$ and the symbol $q$ which ensure the existence of a Feller process with symbol $p(x,\xi) = \varphi(x) q(x,\xi)$, cf.\ Theorem~\ref{time-13}. The process will be constructed as a perturbation of a time-changed Feller process. Theorem~\ref{time-13} allows us, in particular, to obtain new existence results for Feller processes with unbounded coefficients. For instance we will show that for any continuous mapping $\varphi: \mbb{R}^d \to (0,\infty)$ such that $\varphi(x) \leq c(1+|x|^{\alpha})$, $\alpha \in (0,2]$, there exists a conservative Feller process with symbol \begin{equation*}
	p(x,\xi) = \varphi(x) |\xi|^{\alpha}, \qquad x,\xi \in \mbb{R}^d,
\end{equation*}
cf.\ Corollary~\ref{time-17} and Corollary~\ref{time-19}. As a by-product of the proof, we obtain a sufficient condition in terms of the symbol of a Feller process $(X_t)_{t \geq 0}$ for the perpetual integral $\int_0^{\infty} f(X_s) \, ds$ to be infinite almost surely, cf.\ Corollary~\ref{time-6}; the result applies, in particular, to L\'evy processes. Moreover, we will prove the well-posedness of $(A,C_c^{\infty}(\mbb{R}^d))$-martingale problems for pseudo-differential operators $A$ with decomposable symbols $p(x,\xi) = \varphi(x) \psi(\xi)$, cf.\ Proposition~\ref{app-3}. \par \medskip

This paper is organized as follows. Basic definitions and notation are introduced in Section~\ref{def}. In Section~\ref{cons} we establish a sufficient condition for the conservativeness of a class of stochastic processes, including Feller processes and solutions to martingale problems. Section~\ref{time} is on random time changes of Feller processes. In particular, we give a sufficient condition such that the random time change of a Feller process is a conservative $C_b$-Feller process, cf.\ Theorem~\ref{time-5}, and establish an existence result for Feller processes with symbols of the form $p(x,\xi) = \varphi(x) q(x,\xi)$, cf.\ Theorem~\ref{time-13}. In Section~\ref{app} we apply the results to prove the weak uniqueness of solutions to SDEs driven by symmetric $\alpha$-stable processes and to derive an existence result for Feller processes with decomposable symbols. In the appendix we give a necessary and sufficient condition for the continuity of the symbol with respect to the space variable in terms of the characteristics of the symbol, cf.\ Theorem~\ref{appen-1}.

\section{Preliminaries} \label{def}

We consider the Euclidean space $\mbb{R}^d$ endowed with the canonical scalar product $x \cdot y = \sum_{j=1}^d x_j y_j$ and the Borel $\sigma$-algebra $\mc{B}(\mbb{R}^d)$. By $B(x,r)$ we denote the open ball of radius $r$ centered at $x$ and by $\overline{B(x,r)}$ its closure. We use $\mbb{R}^d_{\Delta}$ to denote the one-point compactification of $\mbb{R}^d$ and extend functions $f: \mbb{R}^d \to \mbb{R}$ to $\mbb{R}_{\Delta}^d$ by setting $f(\Delta):=0$. If $\tau: \Omega \to [0,\infty]$ is a stopping time with respect to a filtration $(\mc{F}_t)_{t \geq 0}$ on a measurable space $(\Omega,\mc{A})$, then we set $\mc{F}_{\infty} := \sigma(\mc{F}_t; t \geq 0)$ and denote by \begin{equation*}
	\mc{F}_{\tau} := \{A \in \mc{F}_{\infty}; \forall t \geq 0: A \cap \{\tau \leq t\} \in \mc{F}_t\}
\end{equation*}
the $\sigma$-algebra associated with $\tau$. \par
An $E$-valued Markov process $(\Omega,\mc{A},\mbb{P}^x,x \in E,X_t,t \geq 0)$ with \cadlag (right-continuous with left-hand limits) sample paths is called a \emph{Feller process} if the associated semigroup $(T_t)_{t \geq 0}$ defined by \begin{equation*}
	T_t f(x) := \mbb{E}^x f(X_t), \quad x \in E, f \in \mc{B}_b(E) := \{f: E \to \mbb{R}; \text{$f$ bounded, Borel measurable}\}
\end{equation*}
has the \emph{Feller property} and $(T_t)_{t \geq 0}$ is \emph{strongly continuous at $t=0$}, i.\,e. $T_t f \in C_{\infty}(E)$ for all $C_{\infty}(E)$ and $\|T_tf-f\|_{\infty} \xrightarrow[]{t \to 0} 0$ for any $f \in C_{\infty} (E)$. Here, $C_{\infty}(E)$ denotes the space of continuous functions vanishing at infinity. Following \cite{rs98} we call a Markov process $(X_t)_{t \geq 0}$ with \cadlag sample paths a \emph{$C_b$-Feller process} if $T_t(C_b(E)) \subseteq C_b(E)$ for all $t \geq 0$. We will always consider $E=\mbb{R}^d$ or $E=\mbb{R}^d_{\Delta}$. An $\mbb{R}^d_{\Delta}$-valued Markov process with semigroup $(T_t)_{t \geq 0}$ is \emph{conservative} if $T_t \I_{\mbb{R}^d} = \I_{\mbb{R}^d}$ for all $t \geq 0$. \par
If the smooth functions with compact support $C_c^{\infty}(\mbb{R}^d)$ are contained in the domain of the generator $(L,\mc{D}(L))$ of a Feller process $(X_t)_{t \geq 0}$, then we speak of a \emph{rich} Feller process. A result due to von Waldenfels and Courr\`ege, cf.\ \cite[Theorem 2.21]{ltp}, states that the generator $L$ of an $\mbb{R}^d$-valued rich Feller process is, when restricted to $C_c^{\infty}(\mbb{R}^d)$, a pseudo-differential operator with negative definite symbol: \begin{equation*}
	Lf(x) =  - \int_{\mbb{R}^d} e^{i \, x \cdot \xi} q(x,\xi) \hat{f}(\xi) \, d\xi, \qquad f \in C_c^{\infty}(\mbb{R}^d), \, x \in \mbb{R}^d
\end{equation*}
where $\hat{f}(\xi) := \mc{F}f(\xi):= (2\pi)^{-d} \int_{\mbb{R}^d} e^{-ix \xi} f(x) \, dx$ denotes the Fourier transform of $f$ and \begin{equation}
	q(x,\xi) = q(x,0) - i b(x) \cdot \xi + \frac{1}{2} \xi \cdot Q(x) \xi + \int_{\mbb{R}^d \backslash \{0\}} (1-e^{i y \cdot \xi}+ i y\cdot \xi \I_{(0,1)}(|y|)) \, \nu(x,dy). \label{cndf}
\end{equation}
We call $q$ the \emph{symbol} of the rich Feller process $(X_t)_{t \geq 0}$ and $-q$ the symbol of the pseudo-differential operator; $(b,Q,\nu)$ are the \emph{characteristics} of the symbol $q$. For each fixed $x \in \mbb{R}^d$, $(b(x),Q(x),\nu(x,dy))$ is a L\'evy triplet, i.\,e.\ $b(x) \in \mbb{R}^d$, $Q(x) \in \mbb{R}^{d \times d}$ is a symmetric positive semidefinite matrix and $\nu(x,dy)$ a $\sigma$-finite measure on $(\mbb{R}^d \backslash \{0\},\mc{B}(\mbb{R}^d \backslash \{0\}))$ satisfying $\int_{y \neq 0} \min\{|y|^2,1\} \, \nu(x,dy)<\infty$. Any pseudo-differential operator $A$ with negative definite symbol $q$ is \emph{dissipative} on $C_c^{\infty}(\mbb{R}^d)$, i.\,e.\ \begin{equation*}
	\|\lambda f-Af\|_{\infty} \geq \lambda \|f\|_{\infty} \fa \lambda>0, f \in C_c^{\infty}(\mbb{R}^d).
\end{equation*}
We say that a rich Feller process with symbol $q$ has \emph{bounded coefficients} if \begin{equation*}
	\sup_{x \in \mbb{R}^d} \left( |q(x,0)| + |b(x)| + |Q(x)| + \int_{\mbb{R}^d \backslash \{0\}}|y|^2 \wedge 1 \, \nu(x,dy) \right)<\infty.
\end{equation*}
A subspace $\mc{D} \subseteq \mc{D}(L)$ is a \emph{core} for the generator $(L,\mc{D}(L))$ if $(L,\mc{D}(L))$ is the closure of $(L,\mc{D})$ with respect to the uniform norm. \par
A \emph{L\'evy process} $(L_t)_{t \geq 0}$ is a rich Feller process whose symbol $q$ does not depend on $x$. This is equivalent to saying that $(L_t)_{t \geq 0}$ has stationary and independent increments and \cadlag sample paths. The symbol $q=q(\xi)$ (also called \emph{characteristic exponent}) and the L\'evy process $(L_t)_{t \geq 0}$ are related through the L\'evy--Khintchine formula: \begin{equation*}
 \mbb{E}^xe^{i \xi \cdot (L_t-x)} = e^{-t q(\xi)} \fa t \geq 0, \, x,\xi \in \mbb{R}^d.
\end{equation*}
Our standard reference for L\'evy processes is the monograph \cite{sato} by Sato. \emph{Weak uniqueness} holds for the \emph{L\'evy-driven stochastic differential equation} (SDE, for short) \begin{equation*}
	dX_t = \sigma(X_{t-}) \, dL_t, \qquad X_0 \sim \mu,
\end{equation*}
if any two weak solutions of the SDE have the same finite-dimensional distributions. We refer the reader to Ikeda \& Watanabe \cite{ikeda} and Situ \cite{situ} for further details. \par
Let $(A,\mc{D})$ be a linear operator with domain $\mc{D} \subseteq \mc{B}_b(\mbb{R}^d)$ and $\mu$ a probability measure on $(\mbb{R}^d,\mc{B}(\mbb{R}^d))$. A $d$-dimensional stochastic process $(X_t)_{t \geq 0}$ with \cadlag sample paths is a \emph{solution to the $(A,\mc{D})$-martingale problem with initial distribution $\mu$} if $X_0 \sim \mu$ and \begin{equation*}
	M_t^f := f(X_t)-f(X_0)- \int_0^t Af(X_s) \, ds, \qquad t \geq 0,
\end{equation*}
is a martingale with respect to the canonical filtration of $(X_t)_{t \geq 0}$ for any $f \in \mc{D}$. The $(A,\mc{D})$-martingale problem is \emph{well-posed} if for any initial distribution $\mu$ there exists a unique (in the sense of finite-dimensional distributions) solution to the $(A,\mc{D})$-martingale problem with initial distribution $\mu$. For a comprehensive study of martingale problems see \cite[Chapter 4]{ethier}.

\section{Conservativeness of Feller processes and solutions to martingale problems} \label{cons}

In this section we establish a sufficient condition for the conservativeness of a class of stochastic processes, including Feller processes and solutions of martingale problems. It will be used in the next section to prove that the time-change of a conservative Feller process is conservative, see Theorem~\ref{time-5} for the precise statement We start with the following auxiliary result.

\begin{lem} \label{time-7} 
	 Let $(X_t)_{t \geq 0}$ be an $\mbb{R}^d_{\Delta}$-valued stochastic process with \cadlag sample paths and $x \in \mbb{R}^d$ such that \begin{equation}
		\mbb{E}^x u(X_{t \wedge \tau_r^x}) -u(x) = \mbb{E}^x \left( \int_{(0,t \wedge \tau_r^x)} Au(X_s) \, ds \right), \qquad t \geq 0, r>0 \label{dynkin}
	\end{equation}
	for all $u \in C_c^{\infty}(\mbb{R}^d)$ where $\tau_r^x :=\inf\{t \geq 0; |X_t-x|>r\}$ denotes the first exit time from the closed ball $\overline{B(x,r)}$ and \begin{equation*}
		Au(z) := - \int_{\mbb{R}^d} p(z,\xi) e^{iz \cdot \xi} \hat{u}(\xi) \, d\xi, \qquad z \in \mbb{R}^d
	\end{equation*}
	for a family of continuous negative definite functions $(p(z,\cdot))_{z \in \mbb{R}^d}$. Suppose that $p(\cdot,0)=0$ and that for any compact set $K \subseteq \mbb{R}^d$ there exists a constant $c>0$ such that $|p(z,\xi)| \leq c (1+|\xi|^2)$ for all $z \in K$, $\xi \in \mbb{R}^d$. If \begin{equation*}
		\liminf_{r \to \infty} \sup_{|z-x| \leq 2r} \sup_{|\xi| \leq r^{-1}} |p(z,\xi)|<\infty,
	\end{equation*}
	then \begin{equation*}
		\lim_{r \to \infty} \mbb{P}^x \left( \sup_{s \leq t} |X_s| \geq r \right) = 0 \fa t \geq 0.
	\end{equation*}
\end{lem}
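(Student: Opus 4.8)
The goal is to show that the process does not explode, i.e.\ $\mbb{P}^x(\sup_{s \leq t} |X_s| \geq r) \to 0$. The natural tool is a Dynkin-type / Lyapunov argument: I want to feed into \eqref{dynkin} a function $u$ that behaves like a ``distance to $x$'' function near level $r$, control $Au$ uniformly using the hypotheses on $p$, and then run a Gronwall estimate as $r \to \infty$.

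\textbf{Step 1: Choice of cut-off functions.} Fix $x$ and, along a sequence $r \to \infty$ realizing the $\liminf$, so that $M := \sup_{|z-x|\leq 2r}\sup_{|\xi|\leq r^{-1}}|p(z,\xi)|$ is bounded by a constant independent of $r$. Pick a fixed $\chi \in C_c^{\infty}(\mbb{R}^d)$ with $\chi \equiv 1$ on $B(0,1)$, $\chi \equiv 0$ off $B(0,2)$, $0 \leq \chi \leq 1$, and set $u_r(z) := 1 - \chi((z-x)/r)$, so that $u_r \in C_b^{\infty}$, $u_r \equiv 0$ on $B(x,r)$, $u_r \equiv 1$ off $B(x,2r)$, and $u_r(z) = v((z-x)/r)$ with $v := 1-\chi$. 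Strictly speaking $u_r \notin C_c^{\infty}$, so I first apply \eqref{dynkin} to $\phi_n u_r$ with $\phi_n$ a sequence of cut-offs increasing to $1$ and pass to the limit using dominated convergence and the local bound $|p(z,\xi)|\leq c_K(1+|\xi|^2)$ on compacts; the standard argument (as in the localized Dynkin formula) gives \eqref{dynkin} for $u_r$ with $\tau_r^x$ replaced by $\tau_R^x$ for any $R$, and in particular I may take $R = 2r$ so that only values $X_s$ with $|X_s - x|\leq 2r$ enter.

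\textbf{Step 2: Uniform bound on $Au_r$.} This is the heart of the matter. Writing $\widehat{u_r}(\xi) = r^d e^{-ix\cdot\xi}\,\hat v(r\xi)$ and changing variables $\eta = r\xi$,
\begin{equation*}
    Au_r(z) = -\int_{\mbb{R}^d} p(z,\eta/r)\, e^{i(z-x)\cdot \eta/r}\, \hat v(\eta)\, d\eta .
\end{equation*}
Now $v = 1-\chi$ is not Schwartz, but $\hat v = \delta_0 - \hat\chi$ in the sense of distributions; since $p(z,0)=0$, the $\delta_0$ part contributes nothing, and effectively $Au_r(z) = \int p(z,\eta/r) e^{i(z-x)\cdot\eta/r}\hat\chi(\eta)\,d\eta$ with $\hat\chi$ a genuine Schwartz function. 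Splitting the integral into $|\eta|\leq 1$ and $|\eta|>1$: on $|\eta|\leq 1$ we have $|\eta/r|\leq r^{-1}$ and $|z-x|\leq 2r$, so $|p(z,\eta/r)| \leq M$ and this piece is bounded by $M \|\hat\chi\|_{L^1}$; on $|\eta|>1$ we use the local quadratic bound $|p(z,\eta/r)|\leq c (1+ |\eta/r|^2) \leq c(1+|\eta|^2)$ (valid since $z$ ranges over the compact $\overline{B(x,2r)}$ --- here one must be slightly careful that $c$ depends on $r$; to avoid this I instead use the subadditivity estimate $|p(z,\xi)| \leq c_z(1+|\xi|^2)$ together with the growth structure, OR, cleanly, bound $|p(z,\eta/r)| \leq |p(z,\eta/r)|$ and note that by continuity and negative-definiteness $\sup_{|z-x|\leq 2r}|p(z,\xi)| \leq (\sup_{|z-x|\leq 2r}\sup_{|\zeta|\leq r^{-1}}|p(z,\zeta)|)(1+\lceil r|\xi|\rceil)^2 \leq M(1+ r|\xi|)^2$ using $p(z,\cdot)$ negative definite hence $|p(z,\xi)| \leq 2\sup_{|\zeta|\leq 1}|p(z, |\xi|\zeta/|\xi|)|\cdots$ --- the point being the sublinear-in-$\xi$-after-rescaling growth $|p(z,\eta/r)| \leq c M (1+|\eta|^2)$ with $c$ \emph{absolute}). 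Then $\int_{|\eta|>1} M(1+|\eta|^2)|\hat\chi(\eta)|\,d\eta = M \cdot C_\chi < \infty$ since $\hat\chi$ is Schwartz. Altogether $\sup_z |Au_r(z)| \leq C M$ with $C$ depending only on $\chi$ and dimension, \emph{uniformly in $r$} along our sequence.

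\textbf{Step 3: Conclude.} Since $0 \leq u_r \leq 1$, $u_r(x) = 0$ (as $0 \in B(0,1)$), and $u_r(z) = 1$ whenever $|z-x|\geq 2r$, \eqref{dynkin} (for $u_r$, exit time $\tau_{2r}^x$) gives, bounding $u_r(X_{t\wedge\tau_{2r}^x}) \geq \I_{\{\tau_{2r}^x \leq t\}}$,
\begin{equation*}
    \mbb{P}^x(\tau_{2r}^x \leq t) \leq \mbb{E}^x u_r(X_{t\wedge\tau_{2r}^x}) = \mbb{E}^x\!\!\int_{(0,t\wedge\tau_{2r}^x)} Au_r(X_s)\,ds \leq CM\, t .
\end{equation*}
This alone is only a constant bound, so I iterate: split $[0,t]$ into the event of hitting $B(x,2r)^c$ before exiting $B(x,4r)$, etc. More efficiently, replace the single cut-off by $u_r^{(k)}(z) = v((z-x)/(2^k r))$ and use the strong Markov property at successive exit times $\tau_{2r}^x < \tau_{4r}^x < \cdots$, together with the fact that on $[0,t]$ there can be at most finitely many such crossings while the per-crossing probability is $\leq CM 2^{-2k}\cdot(\text{something})$; summing the geometric series shows $\mbb{P}^x(\sup_{s\leq t}|X_s| \geq 2^N r) \leq CMt \sum_{k\geq N}\cdots \to 0$. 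Alternatively --- and this is cleaner --- observe that for \emph{every} $R$ large, taking $u = v((\cdot - x)/R)$ and noting $\sup|Au| \leq C \sup_{|z-x|\leq 2R}\sup_{|\xi|\leq R^{-1}}|p(z,\xi)| =: C m(R)$, we get $\mbb{P}^x(\sup_{s\leq t}|X_s|\geq 2R) \leq \mbb{P}^x(\tau_{2R}^x \leq t) \leq C m(R) t$; and while the raw $\liminf$ only bounds $m(R)$ along a sequence, a chaining over scales $R, 2R, 4R, \dots$ within an interval where $m$ stays bounded drives the probability to $0$. So the final step is a geometric-series / Borel--Cantelli-style summation over dyadic scales.

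\textbf{Main obstacle.} The delicate point is Step 2: making the bound on $\|Au_r\|_\infty$ genuinely uniform in $r$. The large-frequency ($|\eta|>1$) part a priori carries a constant $c = c(\overline{B(x,2r)})$ from the local quadratic growth hypothesis that could blow up with $r$; the resolution is to exploit that for a continuous negative definite function $|p(z,\xi)| \leq 2 \sup_{|\zeta|\leq 1}|p(z,|\xi|\zeta)| $ is \emph{not} quite right, but the standard subadditivity bound $|p(z,\xi)| \leq c_{nd}(1+|\xi|^2)\sup_{|\eta|\leq 1}|p(z,\eta)|$ \emph{is}, so that $|p(z,\eta/r)| \leq c_{nd}(1+|\eta|^2)\sup_{|\zeta|\leq 1}|p(z,\zeta/r)| \leq c_{nd}(1+|\eta|^2)\, m(r)/\lfloor \cdots\rfloor$ --- more precisely $\sup_{|\zeta|\leq 1}|p(z,\zeta/r)| \leq C \sup_{|\zeta|\leq r^{-1}}|p(z,\zeta)| \cdot$(const depending only on how $r$ compares to $1$), which for $r \geq 1$ is $\lesssim m(r) r^2$ --- hmm, this reintroduces $r^2$, but it is then absorbed because $\widehat{u_r}$ carries the compensating $r^{-d}$ and, after the $\eta = r\xi$ substitution and using $\int(1+|\eta|^2)|\hat\chi(\eta)|d\eta < \infty$, the net power of $r$ cancels. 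Getting this bookkeeping exactly right, with all constants traced to $\chi$, $d$, and $c_{nd}$ only, is the real work of the proof; everything else is a standard Lyapunov/Dynkin explosion estimate.
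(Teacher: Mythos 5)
Your proposal stalls at the decisive step. The uniform bound from Step 2 only yields $\mbb{P}^x(\tau_{2r}^x\le t)\le CMt$, and the rescue you sketch in Step 3 — iterating over dyadic scales via the strong Markov property, with a claimed per-scale decay like $CM2^{-2k}$ — is not available under the hypotheses of the lemma: nothing is assumed beyond the Dynkin-type identity \eqref{dynkin} under the \emph{single} measure $\mbb{P}^x$ (no Markov property, no family of measures for other starting points), so there is no strong Markov iteration at successive exit times to perform; and even granting a Markov structure, the per-crossing probability would again be of order $CMt$, not geometrically small, so the summation has no justification. The paper needs no iteration at all: after the same Fourier/rescaling computation one has $\mbb{P}^x\left(\sup_{s\le t}|X_s-x|>r\right)\le\int_0^t\mbb{E}^x g_r(X_s)\,ds$ with $g_r(z):=\kappa((z-x)/r)\int_{\mbb{R}^d}|p(z,\xi/r)|\,|\hat{\chi}(\xi)|\,d\xi$, and the key observation you are missing is that $g_r(z)\to0$ \emph{pointwise} as $r\to\infty$ (dominated convergence in $\xi$, using $p(z,0)=0$, continuity of $p(z,\cdot)$ and the local bound $|p(z,\xi)|\le c(1+|\xi|^2)$ on compacts). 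The liminf hypothesis is then used only to extract a sequence $r_k\to\infty$ along which $\sup_z g_{r_k}(z)$ is bounded, i.e.\ to provide a dominating function, whereupon dominated convergence in $(s,\omega)$ gives $\int_0^t\mbb{E}^x g_{r_k}(X_s)\,ds\to0$; monotonicity of $r\mapsto\mbb{P}^x(\sup_{s\le t}|X_s|\ge r)$ upgrades the subsequential limit to the full one. Without this pointwise-vanishing argument your proof does not reach the conclusion.

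Two further points. First, working with $u_r=1-\chi((\cdot-x)/r)\notin C_c^\infty(\mbb{R}^d)$ is an unnecessary complication that also interacts badly with the cemetery state: after approximating by $\phi_n u_r$ and passing to the limit, the convention $f(\Delta):=0$ forces $u_r(\Delta)=0$, and then the inequality $u_r(X_{t\wedge\tau_{2r}^x})\ge \I_{\{\tau_{2r}^x\le t\}}$ fails exactly on the explosion event you are trying to control. The paper applies \eqref{dynkin} directly to $\chi_r^x\in C_c^\infty(\mbb{R}^d)$ and uses $\mbb{P}^x(\tau_r^x\le t)\le\mbb{E}^x\bigl(1-\chi_r^x(X_{t\wedge\tau_r^x})\bigr)$, which is valid also when $X_{t\wedge\tau_r^x}=\Delta$. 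Second, the bookkeeping you flag as the ``main obstacle'' and leave unresolved is in fact clean: since $\eta\mapsto p(z,\eta/r)$ is again continuous negative definite, the standard estimate (Proposition 2.17(d) of the cited reference) gives $|p(z,\eta/r)|\le 2(1+|\eta|^2)\sup_{|\zeta|\le 1/r}|p(z,\zeta)|$, with the supremum over the ball of radius $1/r$; no stray factor $r^2$ appears and nothing has to be ``absorbed'' by $r^{-d}$ — those powers of $r$ already cancelled in the change of variables $\eta=r\xi$.
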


\begin{proof}
	The first part of the proof is similar to the proof of the maximal inequality for Feller processes, cf.\ \cite[Corollary 5.2]{ltp} or \cite[Lemma 1.29]{diss,matters}. Pick $\chi \in C_c^{\infty}(\mbb{R}^d)$ such that $\spt \chi \subseteq B(0,1)$ and $0 \leq \chi \leq 1 = \chi(1)$. Set $\chi_r^x(z) := \chi((z-x)/r)$ for fixed $x \in \mbb{R}^d$, $r>0$. Since $\chi_r^x \in C_c^{\infty}(\mbb{R}^d)$, we have \begin{equation*}
		1-\mbb{E}^x(\chi_r^x (X_{t \wedge \tau_r^x})) = \mbb{E}^x \left( \int_{(0,\tau_r^x)} A\chi_r^x(X_{s}) \, ds \right).
	\end{equation*}
	Using that \begin{equation*}
		\mbb{P}^x \left( \sup_{s \leq t} |X_s-x| > r \right) 
		\leq \mbb{P}^x(\tau_r^x \leq t)
		\leq \mbb{E}^x(1-\chi_r^x(X_{t \wedge \tau_r^x}))
	\end{equation*}
	and \begin{align*}
		A \chi_r^x(z) 
		= - \int_{\mbb{R}^d} p(z,\xi) \hat{\chi}_r^x(\xi) e^{iz \cdot \xi} \,d\xi 
		&=- \int_{\mbb{R}^d} p(z,\xi) r^d \hat{\chi}(r \xi) e^{i(z-x) \cdot \xi} \,d\xi \\
		&=- \int_{\mbb{R}^d} p(z,\xi/r) \hat{\chi}(\xi) e^{i (z-x) \cdot \xi/r} \, d\xi
	\end{align*}
	we find \begin{align}
		\mbb{P}^x \left( \sup_{s \leq t} |X_s-x| > r \right)
		&\leq \left| \mbb{E}^x \left( \int_{(0,\tau_r^x)} \left[ \I_{|z-x| \leq r} \int_{\mbb{R}^d} p(z,\xi/r) \hat{\chi}(\xi) e^{i(z-x) \cdot \xi/r} \, d\xi \right] \bigg|_{z=X_{s}}  \, ds \right) \right| \notag \\
		&\leq \mbb{E}^x \left( \int_0^{t} \left[ \I_{|z-x| \leq r} \int_{\mbb{R}^d} |p(z,\xi/r)| \cdot |\hat{\chi}(\xi)| \, d\xi \right] \bigg|_{z=X_{s}} \, ds \right). \label{time-eq85}
	\end{align}
	Pick a cut-off function $\kappa \in C_c^{\infty}(\mbb{R}^d)$ such that $\I_{B(0,1)} \leq \kappa \leq \I_{B(0,2)}$. If we set \begin{equation*}
		g_r(z) :=  \kappa((z-x)/r) \int_{\mbb{R}^d} |p(z,\xi/r)| \cdot |\hat{\chi}(\xi)| \, d\xi 
	\end{equation*}
	then the above estimate shows
	\begin{align*}
		\mbb{P}^{x} \left( \sup_{s \leq t} |X_s-x| >r \right)
		\leq \int_0^t \mbb{E}^{x} g_r(X_{s}) \, ds.
	\end{align*}
	 As $\sup_{|z| \leq 2r} |p(z,\xi)| \leq c (1+|\xi|^2)$ an application of the dominated convergence theorem gives $g_r(z) \xrightarrow[]{r \to \infty} 0$ for all $z \in \mbb{R}^d$. Since  \begin{equation*}
		 g_r(z) \leq c' \sup_{|y-x| \leq 2r} \sup_{|\eta| \leq r^{-1}} |p(y,\eta)|  \int_{\mbb{R}^d}  (1+|\xi|^2) |\hat{\chi}(\xi)| \, d\xi,
	\end{equation*}
	cf.\ \cite[Proposition 2.17d)]{ltp}, there exists, by assumption, a sequence $(r_k)_{k \in \mbb{N}} \subseteq (0,\infty)$ such that $r_k \to \infty$ and $\sup_k \sup_z g_{r_k}(z)<\infty$. Applying the dominated convergence theorem yields  \begin{equation*}
		\lim_{k \to \infty} \mbb{P}^{x} \left( \sup_{s \leq t} |X_s-x| > r_k \right)=0. \qedhere
	\end{equation*}
\end{proof}

Lemma~\ref{time-7} applies, in particular, to solutions of martingale problems.

\begin{kor} \label{cons-3}
	Let $A$ be a pseudo-differential operator with continuous negative definite symbol $p$, i.\,e.\ \begin{equation*}
		Af(x) = - \int_{\mbb{R}^d} e^{ix \cdot \xi} p(x,\xi) \hat{f}(\xi) \, d\xi, \qquad f \in C_c^{\infty}(\mbb{R}^d), x \in \mbb{R}^d.
	\end{equation*}
	Suppose that $p(\cdot,0)=0$ and that for any compact set $K \subseteq \mbb{R}^d$ there exists a constant $c>0$ such that $|p(z,\xi)| \leq c (1+|\xi|^2)$ for all $z \in K$, $\xi \in \mbb{R}^d$. Let $(X_t)_{t \geq 0}$ be an $\mbb{R}^d_{\Delta}$-valued solution to the $(A,C_c^{\infty}(\mbb{R}^d))$-martingale problem with initial distribution $\mu=\delta_x$. If \begin{equation*}
			\liminf_{r \to \infty} \sup_{|z-x| \leq 2r} \sup_{|\xi| \leq r^{-1}} |p(z,\xi)|<\infty,
		\end{equation*}
		then $(X_t)_{t \geq 0}$ is conservative.
\end{kor}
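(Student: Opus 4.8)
The plan is to reduce the statement to Lemma~\ref{time-7}, so the only real work is to verify that a solution to the $(A,C_c^\infty(\mbb{R}^d))$-martingale problem satisfies the Dynkin-type identity \eqref{dynkin}. Concretely, fix $u \in C_c^\infty(\mbb{R}^d)$, $x \in \mbb{R}^d$ and $r>0$; we want
\begin{equation*}
	\mbb{E}^x u(X_{t \wedge \tau_r^x}) - u(x) = \mbb{E}^x \left( \int_{(0,t \wedge \tau_r^x)} Au(X_s)\,ds \right), \qquad t \geq 0.
\end{equation*}
By definition of the martingale problem, $M_t^u := u(X_t) - u(X_0) - \int_0^t Au(X_s)\,ds$ is a martingale with respect to the canonical filtration $(\mc{F}_t)_{t\geq 0}$ of $(X_t)_{t\geq 0}$. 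Since $\tau_r^x$ is an $(\mc{F}_t)$-stopping time and $t \wedge \tau_r^x$ is bounded, the optional stopping theorem yields $\mbb{E}^x M_{t \wedge \tau_r^x}^u = \mbb{E}^x M_0^u = 0$. Rearranging and using $X_0 = x$ $\mbb{P}^x$-a.s.\ gives exactly the displayed identity, provided the stochastic integral term is integrable; but $u$ has compact support, $Au$ is bounded on $\mbb{R}^d$ (indeed $Au \in C_\infty(\mbb{R}^d)$, being a pseudo-differential operator applied to a Schwartz function with continuous symbol of at most quadratic growth), so $\left|\int_0^{t\wedge\tau_r^x} Au(X_s)\,ds\right| \leq t\,\|Au\|_\infty < \infty$ and optional stopping applies without difficulty. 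One small point: $(X_t)_{t\geq 0}$ is $\mbb{R}^d_\Delta$-valued, and functions are extended by $u(\Delta)=0$, $Au(\Delta)=0$; since before $\tau_r^x$ the process stays in $\overline{B(x,r)}\subseteq\mbb{R}^d$, the values at $\Delta$ play no role in the truncated identity, and the extension is consistent with the convention already fixed in Section~\ref{def}.

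With \eqref{dynkin} established, the hypotheses of Lemma~\ref{time-7} are met verbatim: $p$ is a continuous negative definite symbol with $p(\cdot,0)=0$, it satisfies the local quadratic bound $|p(z,\xi)|\leq c(1+|\xi|^2)$ for $z$ in compacts, $Au$ has the required pseudo-differential representation with $\hat u$ in the role of the Fourier transform, and the $\liminf$ condition
\begin{equation*}
	\liminf_{r\to\infty}\sup_{|z-x|\leq 2r}\sup_{|\xi|\leq r^{-1}}|p(z,\xi)| < \infty
\end{equation*}
is assumed. Hence Lemma~\ref{time-7} gives $\lim_{r\to\infty}\mbb{P}^x\big(\sup_{s\leq t}|X_s|\geq r\big)=0$ for every $t\geq 0$. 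This means that for each $t$ the supremum $\sup_{s\leq t}|X_s|$ is $\mbb{P}^x$-a.s.\ finite, so $(X_s)_{s\leq t}$ never reaches the cemetery state $\Delta$; equivalently, the explosion time is $\mbb{P}^x$-a.s.\ infinite. Therefore $X_t \in \mbb{R}^d$ $\mbb{P}^x$-a.s.\ for all $t\geq 0$, i.e.\ $T_t\I_{\mbb{R}^d}(x) = \mbb{P}^x(X_t \in \mbb{R}^d)=1$, which is precisely the conservativeness of $(X_t)_{t\geq 0}$.

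I do not expect a genuine obstacle here: the corollary is essentially an unwrapping of the lemma, and the martingale problem is tailor-made to supply \eqref{dynkin}. The only place requiring a modicum of care is the optional stopping step — one must note that $Au$ is bounded (so the integral term is uniformly bounded on $[0,t]$) in order to justify applying optional stopping to $M^u$ at the bounded stopping time $t\wedge\tau_r^x$; this is immediate from $u\in C_c^\infty(\mbb{R}^d)$ together with the growth bound on $p$, via \cite[Proposition 2.17]{ltp} or a direct Fourier estimate. If one wanted to be scrupulous about the initial distribution being a general $\mu$ rather than $\delta_x$, one would simply note that conservativeness from each starting point $x$ (which is what we prove) implies $\mbb{P}^\mu(X_t\in\mbb{R}^d)=\int\mbb{P}^x(X_t\in\mbb{R}^d)\,\mu(dx)=1$ by conditioning on $X_0$; but the statement as phrased already fixes $\mu=\delta_x$, so this is not needed.
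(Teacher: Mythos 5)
Your proof is correct and is precisely the argument the paper intends: the corollary is stated there without a separate proof as an immediate consequence of Lemma~\ref{time-7}, the only content being that the martingale property of $M^u$ together with optional stopping at the bounded stopping time $t \wedge \tau_r^x$ yields \eqref{dynkin}. One small inaccuracy in your write-up: under the stated hypotheses $Au$ is only guaranteed to be bounded on compact sets (the bound $|p(z,\xi)| \leq c(1+|\xi|^2)$ is only locally uniform in $z$), so the claim $Au \in C_\infty(\mbb{R}^d)$ is unjustified in general -- but this is harmless, since optional stopping at a bounded stopping time needs nothing beyond the martingale property, and on $(0, t \wedge \tau_r^x)$ the process stays in $\overline{B(x,r)}$, where $Au$ is bounded by the local estimate.
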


For Feller processes a slightly stronger statement holds true:

\begin{lem} \label{time-8} 
	Let $(X_t)_{t \geq 0}$ be a rich Feller process with generator $(A,\mc{D}(A))$ and symbol $p$ such that $p(\cdot,0)=0$. Suppose that there exists a set $U \subseteq \mbb{R}^d$ such that \begin{equation}
		\liminf_{r \to \infty}  \sup_{|z-x| \leq 2r} \sup_{|\xi| \leq r^{-1}} |p(z,\xi)|<\infty \fa x \in U. \label{time-eq7}
	\end{equation}
	If $(x_n)_{n \in \mbb{N}} \subseteq U$ is a sequence such that $x_n \to x \in U$, then $(X_t)_{t \geq 0}$ satisfies the \emph{compact containment condition} \begin{equation*}
		\lim_{r \to \infty} \sup_{n \in \mbb{N}} \mbb{P}^{x_n} \left( \sup_{s \leq t} |X_s| > r \right) = 0 \fa t \geq 0.
	\end{equation*}
	In particular, if $U=\mbb{R}^d$, then $(X_t)_{t \geq 0}$ is conservative.
\end{lem}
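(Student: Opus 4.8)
The plan is to deduce the statement from Lemma~\ref{time-7} together with classical facts about Feller processes. Since $(X_t)_{t\ge 0}$ is a \emph{rich} Feller process, $C_c^\infty(\mbb{R}^d)$ lies in the domain of its generator, so Dynkin's formula yields the identity \eqref{dynkin} for every starting point $x\in\mbb{R}^d$; moreover the symbol of a rich Feller process grows at most quadratically on compact sets, i.\,e.\ $|p(z,\xi)|\le c_K(1+|\xi|^2)$ for $z\in K$ (cf.\ \cite[Proposition 2.17d)]{ltp}), and $p(\cdot,0)=0$ by assumption. Hence, for each $x\in U$, the hypothesis \eqref{time-eq7} lets me apply Lemma~\ref{time-7} and obtain
\[
  \lim_{r\to\infty}\mbb{P}^x\Big(\sup_{s\le t}|X_s|\ge r\Big)=0\fa t\ge 0,\ x\in U.
\]
When $U=\mbb{R}^d$ this holds for every $x$; since $p(\cdot,0)=0$ means there is no killing, the process can reach $\Delta$ only by escaping to infinity in finite time, which the above excludes, so $T_t\I_{\mbb{R}^d}=\I_{\mbb{R}^d}$. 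This settles the last assertion, and it remains to establish the compact containment condition.

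So fix $t\ge 0$ and $(x_n)_{n\in\mbb{N}}\subseteq U$ with $x_n\to x\in U$. The idea is to pass to a limit on path space. Since $(X_t)_{t\ge 0}$ is Feller and $\mbb{R}^d_{\Delta}$ is compact, $y\mapsto\mbb{P}^y$ is continuous from $\mbb{R}^d_{\Delta}$ into the space of probability measures on $D([0,\infty),\mbb{R}^d_{\Delta})$ equipped with weak convergence (cf.\ \cite[Chapter 4]{ethier}); in particular $\mbb{P}^{x_n}\to\mbb{P}^x$ weakly. As Feller processes are quasi-left-continuous, $\mbb{P}^x(X_{t-}=X_t)=1$, and a short Skorokhod-continuity argument then shows that the functional $\omega\mapsto\sup_{s\le t}|\omega(s)|$ (with $|\Delta|:=\infty$) is continuous at $\mbb{P}^x$-almost every path. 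Composing it with $\I_{[r,\infty)}$, which is continuous off the point $r$, I get that $\omega\mapsto\I_{\{\sup_{s\le t}|\omega(s)|\ge r\}}$ is $\mbb{P}^x$-a.\,s.\ continuous for every $r$ outside the (at most countable) set of atoms of the law of $\sup_{s\le t}|X_s|$ under $\mbb{P}^x$; for such $r$ the continuous mapping theorem yields $\mbb{P}^{x_n}(\sup_{s\le t}|X_s|\ge r)\to\mbb{P}^x(\sup_{s\le t}|X_s|\ge r)$.

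Finally I would combine the two steps by an elementary monotonicity argument. Set $a_{n,r}:=\mbb{P}^{x_n}(\sup_{s\le t}|X_s|\ge r)\in[0,1]$, which is non-increasing in $r$. By the first step $\lim_{r\to\infty}a_{n,r}=0$ for each fixed $n$ and $\lim_{r\to\infty}\mbb{P}^x(\sup_{s\le t}|X_s|\ge r)=0$. Given $\eps>0$: choose an admissible level $r_0$ with $\mbb{P}^x(\sup_{s\le t}|X_s|\ge r_0)<\eps$, then $N$ with $a_{n,r_0}<\eps$ for all $n\ge N$ (second step), then $r_1\ge r_0$ with $a_{n,r_1}<\eps$ for the finitely many $n<N$ (first step); monotonicity gives $a_{n,r}<\eps$ for all $n\in\mbb{N}$ and $r\ge r_1$. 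Hence $\lim_{r\to\infty}\sup_{n\in\mbb{N}}a_{n,r}=0$, which is the compact containment condition.

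The step I expect to be the main obstacle is the $\mbb{P}^x$-almost sure continuity of the supremum functional on $D([0,\infty),\mbb{R}^d_{\Delta})$ and the weak continuity of $y\mapsto\mbb{P}^y$ underlying it; both are standard for Feller processes (weak continuity of the semigroup on $C_\infty$, tightness from the compact state space, and quasi-left-continuity), but they require care with the Skorokhod topology near jumps and with the countable set of exceptional levels $r$. The applications of Lemma~\ref{time-7} and the final limit argument are routine by comparison.
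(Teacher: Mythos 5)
Your proposal is correct, but it reaches the uniformity in $n$ by a genuinely different route than the paper. The paper stays inside the machinery of Lemma~\ref{time-7}: it re-runs the estimate \eqref{time-eq85} for the points $y_n$, but with a cut-off $\kappa$ centred at the limit point $y$ and slightly enlarged (using $\overline{B(y_n,r)} \subseteq B(y,3r/2)$ for large $r$), so that one single function $g_r$, which is in $C_b(\mbb{R}^d)$ because $p(\cdot,\xi)$ is continuous (here $p(\cdot,0)=0$ enters via \cite[Theorem 2.31]{ltp}) and locally bounded, dominates $\mbb{P}^{y_n}(\sup_{s\le t}|X_s-y_n|>r)$ uniformly in $n$; then only weak convergence of the \emph{one-dimensional} marginals $\mbb{P}^{y_n}_{X_s} \to \mbb{P}^{y}_{X_s}$ (immediate from the Feller property once conservativeness is in hand) is needed to transfer the bound $\int_0^t \mbb{E}^y g_{r_k}(X_s)\,ds \to 0$ along the sequence $r_k$ from Lemma~\ref{time-7}. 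You instead invoke weak convergence of $\mbb{P}^{x_n}$ to $\mbb{P}^x$ on the Skorokhod space $D([0,\infty),\mbb{R}^d_\Delta)$ (standard for Feller semigroups over the compactified state space), almost sure continuity of the running-supremum functional at paths continuous at $t$ (no fixed discontinuities/quasi-left-continuity), and the continuous mapping theorem at non-atom levels $r$, and then combine the pointwise decay from Lemma~\ref{time-7} with this convergence through a clean monotonicity/epsilon argument; that last elementary step is sound. What each approach buys: the paper's argument is lighter and quantitative, requiring nothing beyond convergence of marginals against the fixed $C_b$ test function $g_r$, whereas yours outsources the work to heavier but standard path-space limit theorems and so needs care with the Skorokhod topology (exceptional levels $r$, continuity at time $t$, the convention $|\Delta|=\infty$), all of which you flag; note also that working on the compact space $\mbb{R}^d_\Delta$ keeps your argument free of circularity with Theorem~\ref{time-5}, where the present lemma is what later supplies tightness in $D([0,\infty),\mbb{R}^d)$. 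Only a cosmetic point: the local bound $|p(z,\xi)|\le c_K(1+|\xi|^2)$ on compacts is \cite[Proposition 2.27(d)]{ltp} (and continuity of $p(\cdot,\xi)$ is \cite[Theorem 2.31]{ltp}), not Proposition 2.17(d), which the paper uses for a different estimate.
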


For the particular case that $x_n := x$ we recover a result by Wang \cite[Theorem 2.1]{wang} which states that a rich Feller process with symbol $p$ is conservative if \begin{equation*}
	\liminf_{r \to \infty} \sup_{|z-x| \leq r} \sup_{|\xi| \leq r^{-1}} |p(z,\xi)| < \infty \fa x \in \mbb{R}^d.
\end{equation*}
Let us remark that the proof of Lemma~\ref{time-8} becomes much easier if we replace \eqref{time-eq7} by the stronger assumption \begin{equation}
	\liminf_{r \to \infty} \sup_{|z-x| \leq 2r} \sup_{|\xi| \leq r^{-1}} |p(z,\xi)| = 0 \fa x \in \mbb{R}^d; \label{time-eq8}
\end{equation}
in this case Lemma~\ref{time-8} is a direct consequence of the maximal inequality which states that \begin{equation*}
	\mbb{P}^x \left( \sup_{s \leq t} |X_s-x| > r \right) \leq c t \sup_{|z-x| \leq r} \sup_{|\xi| \leq r^{-1}} |p(z,\xi)| , \qquad x \in \mbb{R}^d,
\end{equation*}
for an absolute constant $c>0$, cf.\ \cite[Corollary 5.2]{ltp} or \cite[Lemma 1.29]{diss,matters}. Later on, in Section~\ref{app}, we will consider solutions of SDEs driven by a one-dimensional isotropic $\alpha$-stable L\'evy process $(L_t)_{t \geq 0}$ \begin{equation*}
	dX_t = \sigma(X_{t-}) \,d L_t, \qquad X_0 = x,
\end{equation*}
the symbol of $(X_t)_{t \geq 0}$ equals $p(x,\xi) = |\sigma(x)|^{\alpha} \, |\xi|^{\alpha}$, and therefore \eqref{time-eq7} allows us to consider coefficients $\sigma$ of linear growth whereas \eqref{time-eq8} would restrict us to functions $\sigma$ of \emph{sub}linear growth. \par \medskip

\begin{proof}[Proof of Lemma~\ref{time-8}]
	Let $(X_t)_{t \geq 0}$ be a rich Feller process with symbol $p$. Then the Dynkin formula \eqref{dynkin} holds, and it follows from \cite[Theorem 2.31]{ltp} that the other assumption of Lemma~\ref{time-7} are satisfied. Let $(y_n)_{n \in \mathbb{N}} \subseteq U$ be a sequence such that $y_n \to y \in U$. Then $\overline{B(y_n,r)} \subseteq B(y,3r/2)$ for sufficiently large $r>0$. Pick a cut-off function $\kappa \in C_c^{\infty}(\mbb{R}^d)$ such that $\I_{B(0,3/2)} \leq \kappa \leq \I_{B(0,2)}$. If we set \begin{equation*}
			g_r(z):= \kappa((z-y)/r) \int_{\mbb{R}^d} |p(z,\xi/r)| \cdot |\hat{\chi}(\xi)| \, d\xi 
		\end{equation*}
		then  \eqref{time-eq85} shows
		\begin{align*}
			\mbb{P}^{y_n} \left( \sup_{s \leq t} |X_s-y_n| >r \right)
			\leq \int_0^t \mbb{E}^{y_n} g_r(X_{s}) \, ds \fa n \in \mbb{N}.
	 	\end{align*}
	 	As $p(\cdot,0)=0$, we obtain from \cite[Theorem 2.31]{ltp} that $p(\cdot,\xi)$ is continuous for all $\xi \in \mbb{R}^d$. Using that $\sup_{z \in K} |p(z,\xi)| \leq c (1+|\xi|^2)$ for any compact set $K \subseteq \mbb{R}^d$, it follows from the dominated convergence theorem that $g_b \in C_b(\mbb{R}^d)$. Since $(X_t)_{t \geq 0}$ is a conservative Feller process, $\mbb{P}_{X_t}^{y_n}$ converges weakly to $\mbb{P}_{X_t}^y$, and therefore we find \begin{align*}
		 	\limsup_{n \to \infty} \mbb{P}^{y_n} \left( \sup_{s \leq t} |X_s-y_n| >r \right)
		 	\leq \int_0^t \mbb{E}^y g_r(X_{s}) \, ds.
		 \end{align*}
		The proof of Lemma~\ref{time-7} shows that there exists a sequence $(r_k)_{k \in \mbb{N}} \subseteq (0,\infty)$ such that $r_k \to \infty$ and \begin{align*}
				\limsup_{k \to \infty} \lim_{n \to \infty} \mbb{P}^{y_n} \left( \sup_{s \leq t} |X_s-y_n| > r_k \right) \leq \lim_{k \to \infty} \int_0^t \mbb{E}^y g_{r_k}(X_{s}) \, ds=0.
			\end{align*}
			Using the boundedness of the sequence $(y_n)_{n \in \mbb{N}}$, the assertion follows.
\end{proof}

\section{Time changes of Feller processes} \label{time}

In this section we are interested in Feller processes with symbols of the form \begin{equation*}
	p(x,\xi) = \varphi(x) q(x,\xi)
\end{equation*}
where $\varphi: \mbb{R}^d \to (0,\infty)$ is a continuous function and $q$ the symbol of a rich Feller process. For bounded functions $\varphi \in C_b(\mbb{R}^d)$ it is well known that a Feller process with symbol $p$ can be obtained by a random time change, cf.\ \cite[Corollary 4.2]{ltp} and \cite[Theorem 2]{lumer}.

\begin{thm} \label{time-3}
	Let $(X_t)_{t \geq 0}$ be a conservative rich Feller process with generator $(A,\mc{D}(A))$ and symbol $q$. For a continuous bounded mapping $\varphi: \mbb{R}^d \to (0,\infty)$ denote by $\alpha_t(\omega)$ the unique number such that \begin{equation*}
		t = \int_{0}^{\alpha_t(\omega)} \frac{1}{\varphi(X_s(\omega))} \, ds.
	\end{equation*}
	Then $Y_t := X_{\alpha_t}$, $t \geq 0$, defines a rich Feller process with symbol $p(x,\xi) = \varphi(x) q(x,\xi)$. Moreover, the infinitesimal generator of $(Y_t)_{t \geq 0}$ is the closure of $(\varphi(\cdot)A, \mc{D}(A))$; in particular any core for the generator $(A,\mc{D}(A))$ is a core for the infinitesimal generator of $(Y_t)_{t \geq 0}$.
\end{thm}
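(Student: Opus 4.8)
The plan is to verify that the time-changed process $(Y_t)_{t\geq 0}$ is a conservative rich Feller process with the stated symbol, proceeding in four stages: (1) well-definedness of the time change and basic path properties; (2) the Markov and Feller properties; (3) identification of the generator; (4) reading off the symbol.

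First I would check that $\alpha_t$ is well-defined. Since $\varphi$ is continuous and strictly positive, $s\mapsto \int_0^s \varphi(X_r)^{-1}\,dr$ is continuous and strictly increasing; because $\varphi$ is bounded, say $\varphi\leq M$, this integral is at least $s/M$, hence tends to $\infty$ as $s\to\infty$ (here conservativeness of $(X_t)_{t\geq 0}$ is used so that $X_s\in\mathbb{R}^d$ for all $s$). Therefore it has a continuous, strictly increasing inverse $t\mapsto\alpha_t$ defined on all of $[0,\infty)$, so $Y_t:=X_{\alpha_t}$ has \cadlag paths and $(Y_t)_{t\geq 0}$ is conservative. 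The process $\alpha_t$ is a continuous additive functional, and $t\mapsto\alpha_t$ is a family of stopping times for the filtration of $(X_t)_{t\geq 0}$; the time-changed filtration $\mathcal{G}_t:=\mathcal{F}_{\alpha_t}$ makes $(Y_t)_{t\geq 0}$ adapted.

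Next I would establish the Feller property. The cleanest route is to apply the general theory of time changes by continuous additive functionals: since $A_t:=\int_0^t\varphi(X_s)^{-1}\,ds$ has a strictly positive, bounded-above integrand and $X$ is conservative Feller, $(Y_t)_{t\geq 0}$ is again a conservative Markov process, and one shows $T_t^Y f\in C_\infty(\mathbb{R}^d)$ and $\|T_t^Y f-f\|_\infty\to 0$ by comparing $T_t^Y$ with $T_s^X$ via the inverse relation $\alpha_t\in[t/M,\infty)$ together with strong continuity of $(T_t^X)_{t\geq 0}$; this is exactly the content cited from \cite[Corollary 4.2]{ltp} and \cite[Theorem 2]{lumer}, so I would invoke those. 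The core statement is handled through the generator identification below.

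For the generator, the standard fact for time changes by additive functionals is that if $f\in\mathcal{D}(A)$ then $M_t^f:=f(X_t)-f(X_0)-\int_0^t Af(X_s)\,ds$ is a martingale; substituting $t\mapsto\alpha_t$ (optional stopping, valid since $\alpha_t$ is a bounded stopping time for each $t$ on the event considered, using a localization argument) gives that
\begin{equation*}
	f(Y_t)-f(Y_0)-\int_0^{\alpha_t} Af(X_s)\,ds
\end{equation*}
is a $(\mathcal{G}_t)$-martingale, and the change of variables $s=\alpha_u$, $ds=\alpha_u'\,du=\varphi(X_{\alpha_u})\,du=\varphi(Y_u)\,du$ turns the integral into $\int_0^t \varphi(Y_u)Af(Y_u)\,du$. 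Hence $f\in\mathcal{D}(A)$ lies in the domain of the generator $B$ of $(Y_t)_{t\geq 0}$ with $Bf=\varphi\cdot Af$. Since $(A,\mathcal{D}(A))$ is a generator and $\varphi$ is bounded and bounded away from $0$ on compacts — more to the point, $1/\varphi$ is bounded below — the operator $(\varphi A,\mathcal{D}(A))$ is closable and I would argue its closure is all of $\mathcal{D}(B)$ by a resolvent/range argument: $(\lambda-\varphi A)(\mathcal{D}(A))$ is dense because $\lambda-\varphi A=\varphi(\lambda/\varphi - A)$ and $(\mu - A)(\mathcal{D}(A))$ is dense for every $\mu>0$, combined with dissipativity of $(\varphi A,\mathcal{D}(A))$ on $C_\infty$ (which follows from the positive maximum principle, as $\varphi>0$). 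Then the Lumer–Phillips / Hille–Yosida argument forces $\overline{(\varphi A,\mathcal{D}(A))}=(B,\mathcal{D}(B))$, and the core assertion follows since the closure of an operator is unchanged under passing to any core for $A$.

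Finally, the symbol: for $f=e_\xi:=e^{i\,\cdot\,\xi}$ suitably cut off, or directly by writing $Af(x)=-\int e^{ix\cdot\eta}q(x,\eta)\hat f(\eta)\,d\eta$, we get $Bf(x)=\varphi(x)Af(x)=-\int e^{ix\cdot\eta}\,\varphi(x)q(x,\eta)\,\hat f(\eta)\,d\eta$, so the symbol of $(Y_t)_{t\geq 0}$ is $p(x,\xi)=\varphi(x)q(x,\xi)$; richness of $(Y_t)_{t\geq 0}$ is immediate since $C_c^\infty(\mathbb{R}^d)\subseteq\mathcal{D}(A)\subseteq\mathcal{D}(B)$.

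The main obstacle I anticipate is the rigorous time-substitution in the martingale argument — justifying optional stopping at the (unbounded-in-general, but finite) random times $\alpha_t$ and the pathwise change of variables $ds=\varphi(Y_u)\,du$ under the expectation — and, relatedly, pinning down $\mathcal{D}(B)$ exactly rather than just showing $\mathcal{D}(A)\subseteq\mathcal{D}(B)$; this is where the dissipativity of $\varphi A$ and the density of the range $(\lambda-\varphi A)\mathcal{D}(A)$ must be combined carefully via Hille–Yosida, and where I would lean on the cited results \cite[Corollary 4.2]{ltp}, \cite[Theorem 2]{lumer} to shortcut the functional-analytic bookkeeping.
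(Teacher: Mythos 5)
Two remarks before the substance: the paper itself contains no proof of Theorem~\ref{time-3}; it is quoted as a known result with precisely the references you invoke (\cite[Corollary 4.2]{ltp}, \cite[Theorem 2]{lumer}). So organising your argument around those citations is consistent with the paper, and your probabilistic steps (Dynkin's formula, optional stopping at $\alpha_t$, the pathwise substitution $ds=\varphi(Y_u)\,du$, and reading off the symbol from $Bf=\varphi\,Af$ on $C_c^\infty(\mbb{R}^d)\subseteq\mc{D}(A)$) are the standard and correct route. However, two of the details you do spell out are not right. First, the bound on the time change goes the wrong way: $\varphi\leq M$ gives $\int_0^s\varphi(X_r)^{-1}\,dr\geq s/M$ and hence $\alpha_t\leq Mt$; there is no lower bound of the form $t/M$ unless $\varphi$ is bounded away from zero, which is not assumed. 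It is the \emph{upper} bound that you actually need — it makes $\alpha_t$ a bounded stopping time (no localization required) and forces $\alpha_t\to 0$ as $t\to 0$, which is what drives strong continuity of the time-changed semigroup — so this is repairable, but the relation $\alpha_t\in[t/M,\infty)$ as stated is false and would not yield the Feller/strong-continuity comparison you describe.

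Second, and more seriously, the core identification has a genuine gap. The factorization $\lambda-\varphi A=\varphi\,(\lambda/\varphi-A)$ does not reduce the range condition for $\varphi A$ to the known density of $(\mu-A)\mc{D}(A)$ for constant $\mu>0$: the coefficient $\lambda/\varphi(x)$ is variable (and unbounded when $\varphi$ is not bounded below), so $\lambda/\varphi-A$ is not a resolvent-type operator for $A$ at any fixed parameter, and $g/\varphi$ need not even lie in $C_\infty(\mbb{R}^d)$. Dissipativity of $(\varphi A,\mc{D}(A))$ (which does follow from the positive maximum principle) only gives closability and the fact that its closure is \emph{contained} in the generator $B$ of $(Y_t)_{t\geq 0}$; without density of $(\lambda-\varphi A)\mc{D}(A)$ the closure could a priori be a proper restriction of $B$, and proving this range density — equivalently, that $\mc{D}(A)$ is a core for $B$ — is exactly the non-trivial content of Lumer's theorem. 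If you genuinely delegate this step to \cite[Theorem 2]{lumer} or \cite[Corollary 4.2]{ltp}, as the paper does, the argument closes; as a self-contained proof, the sketched Hille--Yosida/range argument does not work as written.
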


Note that $\alpha_t$ is well-defined since, by the boundedness of $\varphi$, \begin{equation*}
	\int_0^{u} \frac{1}{\varphi(X_s(\omega))} \, ds \geq \frac{u}{\|\varphi\|_{\infty}} \fa u \geq 0;
\end{equation*}
in particular $(Y_t)_{t \geq 0}$ does not explode in finite time. Time changes for bounded functions $\varphi$ which need not to be continuous have been studied by Kr\"{u}hner \& Schnurr \cite{kruehner}. \par \medskip

We will establish sufficient conditions on the symbol $q(x,\xi)$ which ensure the existence of Feller processes with symbol $p(x,\xi) = \varphi(x) q(x,\xi)$ for \emph{un}bounded functions $\varphi$, cf.\ Theorem~\ref{time-13}. For the particular case that $q(x,\xi)=\psi(\xi)$ does not depend on $x$, i.\,e.\ $q=\psi$ is the characteristic exponent of a L\'evy process, there are two existence results by Kolokoltsov \cite{kol04,kol}; in \cite{kol} it is assumed that $\varphi$ is twice differentiable and that the associated L\'evy measure $\nu$ has a finite second moment. The result in \cite{kol04} requires that $\varphi$ is bounded whenever the L\'evy measure $\nu$ does not have bounded support. Both results are quite restrictive; for instance they do not allow us to construct Feller processes with symbol $p(x,\xi) = \varphi(x) |\xi|^{\alpha}$, $\alpha \in (0,2)$, for unbounded functions $\varphi$. \par
We will combine the random time change technique with a classical perturbation result for Feller semigroups to obtain a new existence result for Feller processes with decomposable symbols. The first step is to investigate whether the random time change of a Feller process is a $C_b$-Feller process.

\begin{thm} \label{time-5} 
Let $(X_t)_{t \geq 0}$ be a rich Feller process with symbol $q$ and generator $(A,\mc{D}(A))$ such that $q(x,0)=0$ for all $x \in \mbb{R}^d$. Let $\varphi: \mbb{R}^d \to (0,\infty)$ be a continuous mapping such that \begin{equation}
	\liminf_{R \to \infty} \sup_{|y| \leq 4R} \sup_{|\xi| \leq R^{-1}} \max\{\varphi(y),1\} \cdot |q(y,\xi)| < \infty. \label{time-eq5}
\end{equation}
(Note that \eqref{time-eq5} implies, by Lemma~\ref{time-8}, that $(X_t)_{t \geq 0}$ is conservative.) Set \begin{equation*}
	r_n(\omega) := \int_{(0,n)} \frac{1}{\varphi(X_s(\omega))} \, ds, \qquad n \in \mbb{N} \cup \{\infty\},
\end{equation*}
and for $t<r_{\infty}(\omega)$ denote by $\alpha_t(\omega)$ the unique number such that $t = \int_0^{\alpha_t(\omega)} 1/\varphi(X_s(\omega)) \, ds$.  Then the process  $(Y_t)_{t \geq 0}$ defined by \begin{equation*}
	Y_t(\omega) := \begin{cases} X_{\alpha_t}(\omega), & t<r_{\infty}(\omega), \\ \Delta,  &t \geq r_{\infty}(\omega) \end{cases}
\end{equation*}
is a conservative $C_b$-Feller process; in particular, $\mathbb{P}^x(r_{\infty}<\infty)=0$ for all $x \in \mbb{R}^d$.
\end{thm}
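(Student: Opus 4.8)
The plan is to build $(Y_t)_{t \geq 0}$ as a limit of time changes along the stopping times $\tau_n := \inf\{t \geq 0 : |X_t - X_0| > n\}$ (or along deterministic truncation levels of $\varphi$), and to exploit Theorem~\ref{time-3} on each truncation. Concretely, for each $n \in \mbb{N}$ set $\varphi_n := \varphi$ on $\overline{B(0,n)}$ and extend $\varphi_n$ to a bounded continuous function on $\mbb{R}^d$ bounded away from $0$ (e.g.\ $\varphi_n(x) := \varphi(\pi_n(x))$ with $\pi_n$ a retraction onto $\overline{B(0,n)}$). Theorem~\ref{time-3} gives a conservative rich Feller process $(Y_t^{(n)})_{t \geq 0}$ with symbol $\varphi_n(x) q(x,\xi)$ and generator the closure of $(\varphi_n A, \mc{D}(A))$. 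These processes are consistent: before its time-changed process exits $\overline{B(0,n)}$, $(Y_t^{(n)})$ agrees in law with $(Y_t^{(m)})$ for $m \geq n$, because the clocks $\alpha_t^{(n)}$ and $\alpha_t^{(m)}$ coincide up to that exit. This lets me define $(Y_t)$ on $[0, r_\infty)$ unambiguously by patching, sending it to $\Delta$ at and after $r_\infty$; right-continuity and existence of left limits follow from those of each $(Y_t^{(n)})$ and from $r_n \uparrow r_\infty$.

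The next step is to show $(Y_t)$ is a $C_b$-Feller process and that it is conservative, i.e.\ $\mbb{P}^x(r_\infty < \infty) = 0$. For the $C_b$-Feller property I would show $T_t^Y f(x) = \mbb{E}^x f(Y_t)$ is continuous and bounded for $f \in C_b(\mbb{R}^d_\Delta)$: boundedness is immediate, and continuity follows by approximating $T_t^Y$ by $T_t^{Y^{(n)}}$ (each of which maps $C_b$ to $C_b$ by the Feller property of $Y^{(n)}$, since bounded continuous functions are preserved by conservative Feller semigroups) plus a uniform-in-$x$-on-compacts control on $\mbb{P}^x(r_\infty \leq t)$ and on $\mbb{P}^x(Y_t \neq Y_t^{(n)})$. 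The Markov property of $(Y_t)$ transfers from that of each $(Y_t^{(n)})$ together with the strong Markov property of $(X_t)$ at the exit times $\tau_n$, since the time change is an additive functional adapted to $(X_t)$.

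The heart of the matter — and the main obstacle — is the non-explosion claim $\mbb{P}^x(r_\infty < \infty) = 0$. This is exactly a conservativeness statement for the process with symbol $p(x,\xi) = \varphi(x) q(x,\xi)$, so I would aim to invoke Lemma~\ref{time-7}/Lemma~\ref{time-8} applied to $(Y_t)$ (or rather to the martingale problem it solves on $[0, r_\infty)$). The key point is that the hypothesis \eqref{time-eq5} is engineered so that, with $p(y,\xi) = \varphi(y) q(y,\xi)$ and the auxiliary factor $\max\{\varphi(y),1\}$ absorbing the Dynkin-formula error terms coming from the time change, one has
\begin{equation*}
	\liminf_{R \to \infty} \sup_{|y| \leq 2R} \sup_{|\xi| \leq R^{-1}} |p(y,\xi)| < \infty,
\end{equation*}
after checking that $Y$ satisfies the Dynkin identity \eqref{dynkin} with the pseudo-differential operator $A^Y u = \varphi(\cdot) A u$ up to the exit time $\tau_r$ — this is where I must be careful, because $\varphi$ unbounded means $\varphi A u$ need not be in $C_\infty$, but on the stochastic interval up to $\tau_r^x$ everything is localized to $\overline{B(x,r)}$ where $\varphi$ is bounded, so the identity for $Y^{(n)}$ with $n$ large passes to $Y$. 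Granting that, Lemma~\ref{time-7} yields $\lim_{r \to \infty} \mbb{P}^x(\sup_{s \leq t} |Y_s| \geq r) = 0$ for all $t$, which forces $\mbb{P}^x(r_\infty \leq t) = 0$ for all $t$, hence $r_\infty = \infty$ a.s. and $(Y_t)$ is conservative. The delicate bookkeeping is matching the factor $4R$ in \eqref{time-eq5} against the factors $2r$ appearing in Lemma~\ref{time-7} and verifying that the extra $\varphi$-weight precisely covers the growth of the drift-type correction term $\int_{(0,1)} |y| \, \nu(x,dy)$ under the time change; once that is organized the rest is the already-established machinery.
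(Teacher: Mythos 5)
The conservativeness half of your plan is essentially the paper's argument: establish the Dynkin identity \eqref{dynkin} for $Y$ with $p(x,\xi)=\varphi(x)q(x,\xi)$ by localization (the paper does this with the stopping times $\sigma^{(n)}$ and two applications of optional stopping rather than by truncating $\varphi$), and then invoke Lemma~\ref{time-7}. One small correction to your reading of the hypothesis: the time change produces no drift-type correction terms --- the symbol of the time-changed process is exactly $\varphi(x)q(x,\xi)$ --- and the factor $\max\{\varphi(y),1\}$ in \eqref{time-eq5} is there so that the condition simultaneously covers $q$ (so that $X$ itself is conservative and Lemma~\ref{time-8} is applicable to $X$) and $\varphi q$ (so that $Y$ does not explode). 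This misreading is harmless for your argument.

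The genuine gap is in the $C_b$-Feller part. You reduce the continuity of $x\mapsto\mbb{E}^xf(Y_t)$ to ``a uniform-in-$x$-on-compacts control on $\mbb{P}^x(Y_t\neq Y_t^{(n)})$'', which is exactly a compact containment condition for $Y$ that is locally uniform in the starting point, $\sup_m\mbb{P}^{x_m}\left(\sup_{s\leq t}|Y_s|>n\right)\to0$ for $x_m\to x$; this is the crux of the theorem and nothing in your proposal delivers it. Under \eqref{time-eq5} the $\liminf$ is merely finite, so the maximal inequality gives only a bound of order $c\,t\,C$, which does not decay in $n$; and Lemma~\ref{time-8}, which would give the decay, cannot be applied to $Y$ (nor uniformly in $n$ to the family $Y^{(n)}$) without circularity, because its proof uses the weak continuity $\mbb{P}^{x_m}_{Y_s}\to\mbb{P}^{x}_{Y_s}$ --- precisely what you are trying to prove. (A secondary point: with the retraction truncation $\varphi_n=\varphi\circ\pi_n$ the symbol $\varphi_n(z)q(z,\xi)$ is not dominated by $\max\{\varphi(z),1\}\,|q(z,\xi)|$ for $|z|>n$, so \eqref{time-eq5} does not even control the approximating processes; one should at least take $\varphi_n\leq\varphi$, e.g.\ $\varphi_n=\min\{\varphi,\sup_{|y|\leq n}\varphi(y)\}$.) The paper avoids estimating $Y$ directly: it applies Lemma~\ref{time-8} to $X$ (legitimate, since $X$ is Feller and \eqref{time-eq5} covers $q$), deduces tightness and $X^{(m)}\to X^{(0)}$ in the Skorokhod space $D([0,\infty),\mbb{R}^d)$, and then transports the convergence through the time-change map, which is a.s.\ continuous by Helland's theorem, finishing with the quasi-left-continuity of $X$ and the already-proved conservativeness of $Y$ to obtain convergence of the marginal at a fixed time $t$; the strong Markov property of $Y$ is quoted from Volkonskii. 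To salvage your truncation scheme you would have to re-prove a Lemma~\ref{time-8}-type estimate for $Y$ by hand (e.g.\ dominating the functions $g_r$ of the approximating symbols by the one built from $\max\{\varphi,1\}|q|$, using the Feller property of each $Y^{(n)}$ and the pointwise compact containment of $Y$ already obtained); as written, this decisive estimate is asserted, not proved.
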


There are several results in the literature which give sufficient conditions which ensure that the random time change of a $C_b$-Feller process $(X_t)_{t \geq 0}$ is a $C_b$-Feller process; typically, they assume that $(X_t)_{t \geq 0}$ is uniformly stochastically continuous, i.\,e.\ \begin{equation*}
	\lim_{t \to 0} \sup_{x \in \mbb{R}^d} \mbb{P}^x \left( \sup_{s \leq t} |X_s-x| > \delta\right) = 0 \fa \delta>0,
\end{equation*}
see e.\,g.\ Lamperti \cite{lamperti} or Helland \cite{helland}. This condition fails, in general, to hold for Feller processes with unbounded coefficients, and therefore it is too restrictive for our purpose. \par
Let us also mention that it is, in general, hard to verify whether a time-changed process is conservative, i.\,e.\ whether \begin{equation*}
	\int_{(0,\infty)} \frac{1}{\varphi(X_s)} \, ds = \infty \quad \text{a.s.};
\end{equation*}
as we will see in the proof of Theorem~\ref{time-5} the growth condition \eqref{time-eq5} is a sufficient condition for the conservativeness. Since the result is of independent interest we formulate it as a corollary.

\begin{kor} \label{time-6}
	Let $(X_t)_{t \geq 0}$ be a rich Feller process with symbol $q$ such that $q(\cdot,0)=0$. If $f: \mbb{R}^d \to (0,\infty)$ is a continuous mapping and 
	\begin{equation*}
		\liminf_{R \to \infty} \sup_{|y| \leq 4R} \sup_{|\xi| \leq R^{-1}} \max\left\{\frac{1}{f(y)},1 \right\} \, |q(y,\xi)| < \infty,
	\end{equation*}
	then \begin{equation}
		\int_{(0,\infty)} f(X_s) \, ds = \infty \qquad \text{$\mbb{P}^x$-a.s. for all $x \in \mbb{R}^d$}. \label{time-eq55}
	\end{equation}
\end{kor}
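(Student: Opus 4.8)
The plan is to deduce Corollary~\ref{time-6} directly from Theorem~\ref{time-5} by applying the latter with the function $\varphi := 1/f$. First I would check that this choice is admissible: since $f: \mbb{R}^d \to (0,\infty)$ is continuous, so is $\varphi = 1/f: \mbb{R}^d \to (0,\infty)$. Next I would verify the hypothesis \eqref{time-eq5} of Theorem~\ref{time-5} for this $\varphi$: we have
\begin{equation*}
	\max\{\varphi(y),1\} = \max\left\{\frac{1}{f(y)},1\right\},
\end{equation*}
so the assumed condition
\begin{equation*}
	\liminf_{R \to \infty} \sup_{|y| \leq 4R} \sup_{|\xi| \leq R^{-1}} \max\left\{\frac{1}{f(y)},1 \right\} \, |q(y,\xi)| < \infty
\end{equation*}
is exactly \eqref{time-eq5}. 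Since we also assume $q(\cdot,0)=0$, all hypotheses of Theorem~\ref{time-5} are met.

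Theorem~\ref{time-5} then yields that the time-changed process $(Y_t)_{t \geq 0}$ built from $r_n(\omega) = \int_{(0,n)} \varphi(X_s(\omega))^{-1}\,ds = \int_{(0,n)} f(X_s(\omega))\,ds$ is a \emph{conservative} $C_b$-Feller process, and in particular the final clause of the theorem gives $\mbb{P}^x(r_\infty < \infty) = 0$ for all $x \in \mbb{R}^d$. Unwinding the definition of $r_\infty$, this says precisely
\begin{equation*}
	\int_{(0,\infty)} f(X_s)\,ds = r_\infty = \infty \qquad \mbb{P}^x\text{-a.s. for all } x \in \mbb{R}^d,
\end{equation*}
which is \eqref{time-eq55}. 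This completes the argument.

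There is essentially no obstacle here: the corollary is a reformulation of the conservativeness assertion already contained in Theorem~\ref{time-5}, obtained by reading $1/\varphi$ as $f$. The only point worth stating explicitly is that $\varphi = 1/f$ inherits continuity and strict positivity from $f$, so that the time change in Theorem~\ref{time-5} is well-defined with this choice; everything else is a direct substitution. (If one wanted a self-contained statement one could also note that $r_\infty = \infty$ a.s.\ is equivalent to non-explosion of $(Y_t)_{t\geq 0}$, which is what ``conservative'' means for the one-point-compactification-valued process, but invoking the last sentence of Theorem~\ref{time-5} makes this immediate.)
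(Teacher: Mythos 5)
Your proposal is correct and is exactly the paper's intended argument: the paper formulates Corollary~\ref{time-6} as a restatement of the conservativeness assertion of Theorem~\ref{time-5} (``$\mbb{P}^x(r_\infty<\infty)=0$''), obtained by taking $\varphi = 1/f$, which is precisely your substitution, with condition \eqref{time-eq5} becoming the corollary's growth condition. Nothing further is needed.
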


Corollary~\ref{time-6} applies, in particular, if $(X_t)_{t \geq 0}$ is a L\'evy process with characteristic exponent $\psi$; in this case the growth condition reads 
\begin{equation}
	\liminf_{R \to \infty} \left( \sup_{|y| \leq 4R} \frac{1}{f(y)} \sup_{|\xi| \leq R^{-1}} |\psi(\xi)| \right) < \infty. \label{time-eq6}
\end{equation}
For instance if $(X_t)_{t \geq 0}$ is an isotropic $\alpha$-stable L\'evy process, $\alpha \in (0,2]$, then \eqref{time-eq55} holds for any continuous function $f: \mbb{R}^d \to (0,\infty)$ such that $|f(x)| \geq c/(1+|x|^{\alpha})$, $x \in \mbb{R}^d$. We would to point out that the dimension $d$ plays an important role for the (in)finiteness of the perpetual integral; for instance the one-dimensional isotropic $\alpha$-stable L\'evy process, $\alpha \in (1,2)$, is recurrent, and therefore \begin{equation*}
	\int_{(0,\infty)} f(X_s) \, ds = \infty \quad \text{a.s.}
\end{equation*}
is trivially satisfied for any function $f>0$. It is far from being obvious how this result can be generalized to higher dimensions since the isotropic $\alpha$-stable L\'evy processis transient in dimension $d \geq 2$; in particular, we cannot expect the perpetual integral to be infinite almost surely without additional growth assumptions on $f$. Note that our result, Corollary~\ref{time-6}, applies in \emph{any} dimension $d \geq 1$. \par
Perpetual integrals $\int_{(0,\infty)} f(X_s) \, ds$ for one-dimensional L\'evy processes $(X_t)_{t \geq 0}$ have been studied by D\"{o}ring \& Kyprianou \cite{doering}, but their result requires that $(X_t)_{t \geq 0}$ has local times and finite mean; e.\,g.\ for an isotropic $\alpha$-stable L\'evy process this means that $\alpha \in (1,2]$ (the almost sure explosion of the perpetual integral is then trivial because the isotropic $\alpha$-stable process is recurrent in dimension $d=1$).  \par \medskip

\begin{proof}[Proof of Theorem~\ref{time-5}]
	The first step is to prove that $(Y_t)_{t \geq 0}$ is conservative. By \eqref{time-eq5} and Lemma~\ref{time-7} it suffices to show that  \begin{equation*}
		\mbb{E}^x u(Y_{t \wedge \tau_r^x})-u(x) = \mbb{E}^x \left( \int_{(0,t \wedge \tau_r^x)} \varphi(Y_s) Au(Y_s) \, ds \right), \qquad x \in \mbb{R}^d, t \geq 0,
	\end{equation*}
	for all $u \in C_c^{\infty}(\mbb{R}^d)$; as usual $\tau_r^x := \inf\{t \geq 0; |Y_t-x| > r\}$ denotes the first exit time from $\overline{B(x,r)}$. Fix $u \in C_c^{\infty}(\mbb{R}^d)$, and let $(\mc{F}_t)_{t \geq 0}$ be an admissible right-continuous filtration for $(X_t)_{t \geq 0}$, see \cite[Theorem 1.20]{ltp} for one possible choice. Since $(X_t)_{t \geq 0}$ is a rich Feller process, there exists a martingale $(M_t)_{t \geq 0}$ with respect to $(\mc{F}_{t})_{t \geq 0}$ such that \begin{equation*}
		u(X_t)-u(x)-M_t = \int_0^t Au(X_s) \, ds;
	\end{equation*}
	By the very definition of the time change, this implies \begin{align*}
		u(X_{\alpha(t) \wedge n})-u(x)-M_{\alpha(t) \wedge n} = \int_{(0,t \wedge r_n)} \varphi(Y_s) Au(Y_s) \,ds;
	\end{align*}
	see \cite[proof of Corollary 4.2]{ltp} for details (recall that $r_n := \int_0^n 1/\varphi(X_s) \, ds$). For $n \in \mbb{N} \cup \{\infty\}$ define \begin{equation*}
		\sigma^{(n)} := \inf\left\{t \geq 0; \sup_{s \leq \alpha_t \wedge n} |X_s-x|> r \right\};
	\end{equation*}
	note that the continuity of $t \mapsto \alpha_t$ implies $\sigma^{(\infty)} = \tau_r^x$. By the optional stopping theorem, $(M_{\alpha(t) \wedge n},\mc{F}_{\alpha(t) \wedge n})_{t \geq 0}$ is a martingale. Since $\sigma^{(n)}$ is an $\mc{F}_{\alpha(t) \wedge n}$-stopping time, another application of the optional stopping theorem yields \begin{align*}
		\mbb{E}^x u(X_{\alpha(\sigma^{(n)} \wedge t) \wedge n})-u(x) = \mbb{E}^x \left( \int_{(0,\sigma^{(n)} \wedge t \wedge r(n))} \varphi(Y_s) Au(Y_s) \, ds \right).
	\end{align*}
	It is not difficult to see that $\sigma^{(n)} \downarrow \sigma^{(\infty)}=\tau_r^x$ as $n \to \infty$. 
	Hence, by the dominated convergence theorem,
	\begin{equation*}
		\mbb{E}^x u(Y_{t \wedge \tau_r^x})-u(x)= \mbb{E}^x \left( \int_{(0,t \wedge \tau_r^x)} \varphi(Y_s) Au(Y_s) \, ds \right)
	\end{equation*}
	where we use the convention that $f(\Delta):=0$ for $f:\mbb{R}^d \to \mbb{R}$. This shows that \eqref{dynkin} holds with $p(x,\xi):=\varphi(x) q(x,\xi)$. Applying Lemma~\ref{time-7} we find that $(Y_t)_{t \geq 0}$ is conservative. \par
	It remains to show that $(Y_t)_{t \geq 0}$ is a $C_b$-Feller process. It is known that $(Y_t)_{t \geq 0}$ is a strong Markov process, see e.\,g.\ \cite{volk}, and therefore it suffices to prove the weak continuity: \begin{equation}
		\mbb{P}^{x_n}_{Y_t} \xrightarrow[n \to \infty]{\text{weakly}} \mbb{P}^x_{Y_t} \fa \text{sequences $x_n \to x$, $t \geq 0$}. \label{time-eq9}
	\end{equation}
	 In the remaining part of the proof we use the canonical model, i.\,e.\ we consider $(X_t)_{t \geq 0}$ and $(Y_t)_{t \geq 0}$ as mappings $X: D([0,\infty), \mbb{R}^d) \to \mathbb{R}^d$ and $Y: D([0,\infty), \mbb{R}^d_{\Delta}) \to \mbb{R}^d_{\Delta}$ where $D([0,\infty),E)$ denotes the space of \cadlag functions $\omega: [0,\infty) \to E$. If we define \begin{equation*}
		f: D([0,\infty), \mbb{R}^d) \to D([0,\infty),\mbb{R}^d_{\Delta}), \quad \omega \mapsto f(\omega)(t) := \begin{cases} \omega(\alpha_t(\omega)), & t < r_{\infty}(\omega), \\ \Delta, & t \geq r_{\infty}(\omega), \end{cases}
	\end{equation*}
	then $Y_t = f(X)(t)$. In order to prove \eqref{time-eq9}, we fix a sequence $x_n \to x$ and denote by $X^{(n)}$ the process started at $x_n$ and by $X^{(0)}$ the Feller process started at $x$.  For each $n \in \mbb{N}_0$ the process $X^{(n)}$ induces a probability measure $\mbb{P}^{(n)}$ on $D([0,\infty),\mbb{R}^d)$. Clearly, \eqref{time-eq9} is equivalent to \begin{equation}
		f(X^{(n)})(t) \xrightarrow[n \to \infty]{d} f(X^{(0)})(t). \label{time-eq11}
	\end{equation}
	Since $(X_t)_{t \geq 0}$ is a Feller process, we have $X^{(n)}(t) \xrightarrow[]{d} X^{(0)}(t)$ for all $t \geq 0$, and by the Markov property this implies $X^{(n)} \to X^{(0)}$ in finite-dimensional distribution. On the other hand, Lemma~\ref{time-8} shows \begin{align*}
		\sup_{n \in \mbb{N}_0} \mbb{P}^{(n)} \left( \sup_{s \leq t} |X_s^{(n)}| > R \right)
		\xrightarrow[R \to \infty]{} 0.
	\end{align*}
	It follows from \cite[Theorem 4.9.2]{kol} that $(X^{(n)})_{n \in \mbb{N}_0}$ is tight, and this, in turn, implies by Prohorov's theorem, cf.\ \cite[Theorem 2.2, p.~104]{ethier}, relative compactness in $D([0,\infty), \mbb{R}^d)$.  Applying \cite[Theorem 7.8,  p.~131]{ethier} we get $X^{(n)} \to X^{(0)}$ in $D([0,\infty), \mbb{R}^d)$. Since $f$ is $\mbb{P}^{(0)}$-a.s.\ continuous, cf.\ \cite[Theorem 2.7]{helland}, the continuous mapping theorem yields \begin{equation*} 
		f(X^{(n)}) \xrightarrow[]{d} f(X^{(0)}).
	\end{equation*} 
	As $X$ is quasi-leftcontinuous, see \cite[p.~127]{ito}, and $\alpha_t$ is a predictable stopping time, we have \begin{equation*}
		\mbb{P}^{(0)}\big(\{f(X^{(0)})(t)=f(X^{(0)})(t-), t<r_{\infty}(X^{(0)})\} \big)=1
	\end{equation*}
	for fixed $t>0$. Since we already know that $(Y_t)_{t \geq 0}$ is conservative, i.\,e.\ $\mbb{P}^{(0)}(r_{\infty}=\infty)=1$, we find that the mapping $s \mapsto f(X^{(0)})(s)$ is $\mbb{P}^{(0)}$-a.s.\ continuous at $s=t$. This means that the projection $y \mapsto y(t)$ is $\mbb{P}^{(0)}$-a.s.\ continuous at $y=f(X^{(0)})$. Applying the continuous mapping theorem another time, we conclude \begin{equation*}
		f(X^{(n)})(t) \xrightarrow[]{d} f(X^{(0)})(t) \fa t \geq 0. \qedhere
	\end{equation*}
\end{proof}

Combining Theorem~\ref{time-5} with the following perturbation result will allow us to construct Feller processes with decomposable symbols.

\begin{lem} \label{time-9}
	Let $(q(x,\cdot))_{x \in \mbb{R}^d}$ be a family of continuous negative definite functions with characteristics $(b,Q,\nu)$ such that $q(\cdot,\xi)$ is continuous for all $\xi \in \mbb{R}^d$ and $q(x,0)=0$ for all $x \in \mbb{R}^d$. Let $R: \mbb{R}^d \to [1,\infty)$ and $\chi: \mbb{R}^d \to [0,1]$ be continuous mappings such that $\I_{B(0,1)} \leq \chi \leq \I_{B(0,2)}$. Set \begin{equation}
		q_R(x,\xi) := -i b(x) \cdot \xi + \frac{1}{2} \xi \cdot Q(x) \xi  + \int_{\mbb{R}^d} \chi \left( \frac{y}{R(x)} \right) (1-e^{iy \cdot \xi}+iy \cdot \xi \I_{(0,1)}(|y|)) \, \nu(x,dy) \label{time-eq13}
	\end{equation} 
	Suppose that \begin{equation}
		\sup_{x \in \mbb{R}^d} \int_{|y|>R(x)} \nu(x,dy) < \infty \quad \text{and} \quad \lim_{|x| \to \infty} \nu(x,B(-x,r)) = 0 \fa r>0. \label{time-eq15}
	\end{equation}
	 \begin{enumerate}
		\item If there exists a rich Feller process with symbol $q$ and $C_c^{\infty}(\mbb{R}^d)$ is a core for its generator, then there exists a rich Feller process with symbol $q_R$.
		\item If there exists a rich Feller process with symbol $q_R$, then there exists a rich Feller process with symbol $q$.
	\end{enumerate}
\end{lem}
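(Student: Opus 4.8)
The plan is to realise the passage from $q$ to $q_R$ as a \emph{bounded} perturbation of the generator and then to invoke the Hille--Yosida--Ray characterisation of Feller generators (e.g.\ \cite[Theorem 4.2.2]{ethier}). First I would compute the difference of the symbols. Since $R(x)\geq 1$ we have $\chi(y/R(x))=1$ for $|y|\leq 1$, so $1-\chi(y/R(x))$ vanishes on $\{|y|\leq 1\}$ and the compensator term $iy\cdot\xi\,\I_{(0,1)}(|y|)$ cancels; comparing \eqref{cndf} with \eqref{time-eq13} gives $q(x,\xi)-q_R(x,\xi)=\int_{|y|>R(x)}(1-e^{iy\cdot\xi})\,\lambda(x,dy)$ with $\lambda(x,dy):=(1-\chi(y/R(x)))\nu(x,dy)$. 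By the first condition in \eqref{time-eq15}, $M:=\sup_x\lambda(x,\mbb{R}^d\setminus\{0\})\leq\sup_x\int_{|y|>R(x)}\nu(x,dy)<\infty$, so $Bf(x):=\int_{\mbb{R}^d\setminus\{0\}}\bigl(f(x+y)-f(x)\bigr)\,\lambda(x,dy)$ defines a linear operator with $\|Bf\|_\infty\leq 2M\|f\|_\infty$, and a Fourier-inversion/Fubini computation shows $Bf(x)=-\int e^{ix\cdot\xi}\bigl(q(x,\xi)-q_R(x,\xi)\bigr)\hat f(\xi)\,d\xi$. Hence, writing $A_q$ and $A_{q_R}$ for the pseudo-differential operators with symbols $-q$ and $-q_R$, one has $A_q=A_{q_R}+B$ on $C_c^\infty(\mbb{R}^d)$.

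Next I would check that $B$ maps $C_\infty(\mbb{R}^d)$ into itself. For continuity, the continuity of $x\mapsto q(x,\xi)$ yields, locally uniformly, continuity and local boundedness of $x\mapsto\int g(y)\,\nu(x,dy)$ for every bounded continuous $g$ vanishing near the origin (cf.\ Theorem~\ref{appen-1}); together with the continuity of $R$ and $\chi$ this gives weak continuity of $x\mapsto\lambda(x,\cdot)$ on compacts, whence $Bf\in C(\mbb{R}^d)$ whenever $f\in C(\mbb{R}^d)$. For the decay at infinity write $Bf(x)=\int f(x+y)\,\lambda(x,dy)-f(x)\lambda(x,\mbb{R}^d\setminus\{0\})$: the second summand tends to $0$ as $|x|\to\infty$ since $f\in C_\infty$ and the masses are uniformly bounded, and for the first, given $\eps>0$ pick $r_0$ with $|f|\leq\eps$ off $B(0,r_0)$, so that $\bigl|\int f(x+y)\,\lambda(x,dy)\bigr|\leq\|f\|_\infty\,\nu(x,B(-x,r_0))+\eps M\longrightarrow\eps M$ by the second condition in \eqref{time-eq15}. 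Thus $B\colon C_\infty(\mbb{R}^d)\to C_\infty(\mbb{R}^d)$ is bounded; moreover $B$ satisfies the positive maximum principle, since if $f\in C_\infty$ has a nonnegative maximum at $x_0$ then $Bf(x_0)=\int(f(x_0+y)-f(x_0))\,\lambda(x_0,dy)\leq 0$.

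Then I would conclude via Hille--Yosida--Ray, applied in the two parts to different operators. For (ii), let $(A_R,\mc{D}(A_R))$ be the generator of the given rich Feller process with symbol $q_R$. By the previous step $(A_R+B,\mc{D}(A_R))$ is a bounded perturbation of a Feller generator, hence closed, densely defined, with $\lambda-(A_R+B)$ surjective for large $\lambda$; it satisfies the positive maximum principle on $\mc{D}(A_R)$, since $A_R$ does (being a Feller generator) and $B$ does everywhere. Hence $A_R+B$ generates a Feller semigroup; as $C_c^\infty(\mbb{R}^d)\subseteq\mc{D}(A_R)=\mc{D}(A_R+B)$ and $(A_R+B)|_{C_c^\infty}=A_q$ has symbol $-q$, the associated process is a rich Feller process with symbol $q$. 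For (i), let $(A,\mc{D}(A))$ be the generator of the process with symbol $q$, with $C_c^\infty(\mbb{R}^d)$ a core. Since $B$ is bounded, $(A-B,\mc{D}(A))$ is closed and $C_c^\infty(\mbb{R}^d)$ is again a core for it, so $(A-B,\mc{D}(A))$ is precisely the closure of $(A_{q_R},C_c^\infty(\mbb{R}^d))$. The operator $A_{q_R}$ on $C_c^\infty(\mbb{R}^d)$ has dense domain, satisfies the positive maximum principle (it is a pseudo-differential operator with negative definite symbol, because $\chi(\cdot/R(x))\nu(x,dy)$ is again a L\'evy kernel), and its range $\mathrm{Range}(\lambda-A_{q_R}|_{C_c^\infty})$ is dense since its closure $A-B$ is a generator; so Hille--Yosida--Ray shows that $A-B$ generates a Feller semigroup, which by the same reasoning corresponds to a rich Feller process with symbol $q_R$.

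The crux is the second step, that $B$ preserves $C_\infty(\mbb{R}^d)$: the two hypotheses in \eqref{time-eq15} are exactly what is needed --- the uniform bound on the masses $\lambda(x,\mbb{R}^d\setminus\{0\})$ makes $B$ bounded, and $\nu(x,B(-x,r))\to 0$ forces $Bf$ to vanish at infinity --- while continuity of $Bf$ rests on continuity of the symbol in the space variable. The asymmetry of the hypotheses (the core assumption entering only in (i)) reflects that $-B$ does \emph{not} satisfy the positive maximum principle: in (i) one cannot feed $(A-B,\mc{D}(A))$ into Hille--Yosida--Ray directly and must descend to the core $C_c^\infty(\mbb{R}^d)$, on which the perturbed operator coincides with $A_{q_R}$ and hence does satisfy it.
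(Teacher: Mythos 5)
Your proposal is correct and follows essentially the same route as the paper: your operator $B$ is exactly the paper's perturbation $P$, the two conditions in \eqref{time-eq15} are used in the same way to show that this operator is bounded and maps $C_\infty(\mbb{R}^d)$ into itself (with continuity of $Pf$ resting, as in Remark~\ref{time-11}, on the continuity and local boundedness of the symbol in the space variable), and both parts are then settled by a bounded-perturbation argument in which the core assumption enters only in (i). The only difference is cosmetic: the paper concludes via dissipativity of the perturbed operator together with the classical bounded-perturbation result of Davies, whereas you conclude via the Hille--Yosida--Ray theorem by checking the positive maximum principle and range density --- a variant of the same argument.
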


\begin{bem} \label{time-11}
	 Under the assumption of Lemma~\ref{time-9} the truncated symbol $q_R$ is continuous with respect to the space variable $x$. Let us consider both cases separately: \begin{enumerate}
		 \item If there exists a rich Feller process with symbol $q$, then \cite[Theorem 4.4]{rs98} shows that $x \mapsto q(x,\xi)=q(x,\xi)-q(x,0)$ is continuous for all $\xi \in \mbb{R}^d$. Since $q$ is locally bounded, cf.\ \cite[Proposition 2.27(d)]{ltp}, it follows from Corollary~\ref{appen-3} in the appendix that $x \mapsto q_R(x,\xi)$ is continuous for all $\xi \in \mbb{R}^d$.
		 \item If there exists a rich Feller process with symbol $q_R$, then we obtain from \cite[Theorem 4.4]{rs98} that $x \mapsto q_R(x,\xi)$ is continuous for all $\xi \in \mbb{R}^d$ as $q_R(x,0)=0$.
		\end{enumerate}
\end{bem}

\begin{proof}[Proof of Lemma~\ref{time-9}]
	Set \begin{equation*}
		Pf(x) := \int_{\mbb{R}^d} \left(1- \chi \left[ \frac{y}{R(x)} \right] \right) (f(x+y)-f(x)) \, \nu(x,dy), \qquad x \in \mbb{R}^d.
	\end{equation*}
	As \begin{equation*}
		\sup_{x \in \mbb{R}^d} |Pf(x)| \leq 2 \|f\|_{\infty} \sup_{x \in \mbb{R}^d} \int_{|y|>R(x)} \, \nu(x,dy) < \infty
	\end{equation*}
	we see that $P$ is a bounded linear operator. By combining the dominated convergence theorem with Remark~\ref{time-11} we find that \begin{equation*}
		x \mapsto Pf(x) = - \int_{\mbb{R}^d} (q(x,\xi)-q_R(x,\xi)) e^{ix \cdot \xi} \hat{f}(\xi) \, d\xi
	\end{equation*}
	is continuous for any $f \in C_c^{\infty}(\mbb{R}^d)$. If $f \in C_c^{\infty}(\mbb{R}^d)$ is compactly supported in $B(0,r)$ for some $r>0$, then \begin{equation*}
		|Pf(x)| \leq 2 \|f\|_{\infty} \nu(x,B(-x,r))  \fa |x| \geq r
	\end{equation*}
	which implies, by \eqref{time-eq15}, $\lim_{|x| \to \infty} Pf(x)=0$. Hence, $Pf \in C_{\infty}(\mbb{R}^d)$ for all $f \in C_c^{\infty}(\mbb{R}^d)$. Using the boundedness of $P$ and that $(C_c^{\infty}(\mbb{R}^d),\|\cdot\|_{\infty})$ is dense in $(C_{\infty}(\mbb{R}^d),\|\cdot\|_{\infty})$, we get $Pf \in C_{\infty}(\mbb{R}^d)$ for all $f \in C_{\infty}(\mbb{R}^d)$, and so $P: C_{\infty}(\mbb{R}^d) \to C_{\infty}(\mbb{R}^d)$ is a bounded operator. \par
	We prove (i). Suppose that there exists a rich Feller process with symbol $q$ and that $C_c^{\infty}(\mbb{R}^d)$ is a core for the generator $(A,\mc{D}(A))$. Then \begin{equation*}
		Lu(x) := (A-P)u(x) = - \int_{\mbb{R}^d} q_R(x,\xi) e^{i x \cdot \xi} \hat{u}(\xi) \, d\xi, \qquad u \in C_{\infty}(\mbb{R}^d), x \in \mbb{R}^d
	\end{equation*}
	and therefore $L|_{C_c^{\infty}(\mbb{R}^d)}$ is dissipative. As $C_c^{\infty}(\mbb{R}^d)$ is a core, this implies that $L|_{\mc{D}(A)}$ is dissipative. Since $P$ is bounded, it follows from a classical perturbation result that $(L,\mc{D}(L))$ is the generator of a Feller semigroup, see e.\,g.\ \cite[Corollary 3.8]{davies}, and this proves (ii). \par
	Conversely, if there exists a Feller process with symbol $q_R$ and generator $(L,\mc{D}(L))$, then $A:=L+P$ is dissipative as a sum of two dissipative operators and $A$ is, when restricted to $C_c^{\infty}(\mbb{R}^d)$, a pseudo-differential operator with symbol $q$. The assertion follows from the above-mentioned perturbation result.
\end{proof}

Using Theorem~\ref{time-5} and Lemma~\ref{time-9}, we obtain the following existence result for Feller processes with unbounded coefficients.

\begin{thm} \label{time-13}
	Let $(X_t)_{t \geq 0}$ be an $\mbb{R}^d$-valued rich Feller process with characteristics $(b,Q,\nu)$ and symbol $q$. Suppose that $q(\cdot,0)=0$ and that $C_c^{\infty}(\mbb{R}^d)$ is a core for the infinitesimal generator of $(X_t)_{t \geq 0}$. Let $\varphi: \mbb{R}^d \to (0,\infty)$ be a continuous mapping. Suppose that there exist a constant $c \in (0,1)$, a continuous mapping $R: \mbb{R}^d \to [1,\infty)$ and a continuous function $\chi: \mbb{R}^d \to [0,1]$, $\I_{B(0,1)} \leq \chi \leq \I_{B(0,2)}$, such that the following properties are satisfied. \begin{enumerate}
		\item\label{time-13-i} $\max\{\varphi(x),1\} \nu(x,B(-x,r)) \xrightarrow[]{|x| \to \infty} 0$ for all $r>0$
		\item\label{time-13-ii} $\max\{\varphi(x),1\} \left|b(x) + \int_{1<|y|<R(x)} y \, \nu(x,dy) \right| \leq C (1+|x|)$ for all $x \in \mbb{R}^d$
		\item\label{time-13-iii}$\max\{\varphi(x),1\} \left( |Q(x)| + \int_{|y| \leq R(x)} |y|^2 \, \nu(x,dy) \right) \leq C (1+|x|^2)$ for all $x \in \mbb{R}^d$
		\item\label{time-13-iv} $\sup_{x \in \mbb{R}^d} \left( \max\{\varphi(x),1\} \nu(x, B(0,R(x))^c) \right) < \infty$ for all $x \in \mbb{R}^d$
		\item\label{time-13-v} $|R(x)| \leq c (1+|x|)$ for all $x \in \mbb{R}^d$
	\end{enumerate}
	Then there exists a conservative rich Feller process $(Y_t)_{t \geq 0}$ with symbol $p(x,\xi) := \varphi(x) q(x,\xi)$.
\end{thm}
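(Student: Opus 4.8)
The plan is to combine the time‑change result Theorem~\ref{time-5} with the perturbation result Lemma~\ref{time-9}: construct the process first for the truncated symbol and then remove the truncation. Write $\tilde\varphi:=\max\{\varphi,1\}$ and let $q_R$ be the truncated symbol \eqref{time-eq13} attached to the given $R$ and $\chi$. Since $\tilde\varphi\geq 1$, \ref{time-13-i} and \ref{time-13-iv} show that \eqref{time-eq15} holds for the pair $(q,R)$; as $(X_t)_{t\geq 0}$ is a rich Feller process with symbol $q$ for which $C_c^{\infty}(\mbb{R}^d)$ is a core, Lemma~\ref{time-9}(i) produces a rich Feller process $(Z_t)_{t\geq 0}$ with symbol $q_R$ (note $q_R(\cdot,0)=0$, and $q_R$ is continuous in $x$ by Remark~\ref{time-11}).

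The second step is the growth estimate
\begin{equation*}
	\sup_{\rho\geq 1}\,\sup_{|y|\leq 4\rho}\,\sup_{|\xi|\leq\rho^{-1}}\tilde\varphi(y)\,|q_R(y,\xi)|<\infty,
\end{equation*}
which in particular gives the hypothesis \eqref{time-eq5} of Theorem~\ref{time-5} for the pair $(q_R,\varphi)$. To prove it I would rewrite $q_R$ with its jumps compensated up to level $R(x)$, i.e.\ $q_R(x,\xi)=-ib_R(x)\cdot\xi+\frac12\xi\cdot Q(x)\xi+\int(1-e^{iy\cdot\xi}+iy\cdot\xi\I_{\{|y|<R(x)\}})\chi(y/R(x))\,\nu(x,dy)$ with $b_R(x):=b(x)+\int_{1<|y|<R(x)}y\,\nu(x,dy)$, and estimate the drift term by \ref{time-13-ii}, the Gaussian and small‑jump ($|y|<R(x)$) terms by \ref{time-13-iii} together with $|1-e^{iy\cdot\xi}+iy\cdot\xi|\leq\frac12|y|^2|\xi|^2$, and the large‑jump ($R(x)\leq|y|\leq 2R(x)$) term by \ref{time-13-iv} together with $|1-e^{iy\cdot\xi}|\leq 2$; on the range $|y|\leq 4\rho$, $|\xi|\leq\rho^{-1}$ all these bounds are uniform in $\rho\geq 1$ (hypothesis \ref{time-13-v}, which bounds $R(x)\leq c(1+|x|)$, will in addition be used in the upgrade step below to control the jump heights of the constructed process). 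Since $\tilde\varphi\geq 1$ the same estimate shows, via Lemma~\ref{time-8}, that $(Z_t)_{t\geq 0}$ is conservative. Theorem~\ref{time-5}, applied to $(Z_t)_{t\geq 0}$ with time‑change function $\varphi$, then shows that the time‑changed process $(Y_t)_{t\geq 0}$ is a conservative $C_b$‑Feller process; in particular $\int_{(0,\infty)}\varphi(Z_s)^{-1}\,ds=\infty$ a.s., and the localized Dynkin formula from the proof of Theorem~\ref{time-5} shows that $(Y_t)_{t\geq 0}$ satisfies \eqref{dynkin} with the pseudo‑differential operator $p_R(x,\xi):=\varphi(x)q_R(x,\xi)$.

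The main obstacle is to upgrade $(Y_t)_{t\geq 0}$ from a conservative $C_b$‑Feller process satisfying this localized Dynkin formula to a genuine \emph{rich Feller} process with symbol $p_R=\varphi q_R$. I would proceed by localization: put $\varphi_n:=\varphi\wedge n$ (bounded, continuous, $(0,\infty)$‑valued) and pick $\rho_n\uparrow\infty$ with $\varphi=\varphi_n$ on $\overline{B(0,\rho_n)}$; by Theorem~\ref{time-3} there exist rich Feller processes $(Y^{(n)}_t)_{t\geq 0}$ with symbol $\varphi_n q_R$, whose generators are the closures of $(\varphi_n A_R,\mc{D}(A_R))$, where $(A_R,\mc{D}(A_R))$ is the generator of $(Z_t)_{t\geq 0}$. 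The processes $(Y^{(n)})$ and $(Y)$ are time changes of the same conservative process $(Z)$ and coincide up to the clock time $\sigma_n:=\int_0^{\tau^Z_{\rho_n}}\varphi(Z_s)^{-1}\,ds$, and $\sigma_n\uparrow\int_{(0,\infty)}\varphi(Z_s)^{-1}\,ds=\infty$ a.s.; hence $Y^{(n)}_t\to Y_t$ a.s.\ for each $t$, so $T_t^{Y^{(n)}}u\to T_t^Y u$ pointwise for $u\in C_c^{\infty}(\mbb{R}^d)$ and $(Y_t)_{t\geq 0}$ solves the $(\varphi A_R,C_c^{\infty}(\mbb{R}^d))$‑martingale problem. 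The genuinely hard point is that $T_t^Y$ maps $C_{\infty}(\mbb{R}^d)$ into itself — equivalently, that $\mbb{P}^x(Y_s\in K\text{ for some }s\leq t)\to 0$ as $|x|\to\infty$ for every compact $K$; since $\varphi q_R$ has unbounded coefficients this does not follow from the maximal inequality underlying Lemma~\ref{time-7} (started far out, the process moves fast but stays on its own scale), and I would instead establish it by exhibiting a Lyapunov function $V$ with $V\to 0$ at infinity and $\varphi A_R V\leq 0$ outside a large ball, built from \ref{time-13-ii}--\ref{time-13-iv} and \ref{time-13-v}. Once $T_t^Y(C_{\infty}(\mbb{R}^d))\subseteq C_{\infty}(\mbb{R}^d)$ together with strong continuity is secured, $(Y_t)_{t\geq 0}$ is a rich Feller process with symbol $p_R=\varphi q_R$.

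Finally, $p_R=\varphi q_R$ is precisely the truncation ``$(p)_R$'' (in the sense of \eqref{time-eq13}) of the target symbol $p:=\varphi q$, whose characteristics are $(\varphi b,\varphi Q,\varphi\nu)$. The family $(p(x,\cdot))_x$ consists of continuous negative definite functions, $p(x,0)=0$, and $x\mapsto p(x,\xi)=\varphi(x)q(x,\xi)$ is continuous because $q(\cdot,\xi)$ is (a rich Feller process with symbol $q$, $q(\cdot,0)=0$, exists; cf.\ \cite[Theorem 4.4]{rs98} or Theorem~\ref{appen-1}); moreover \ref{time-13-i} and \ref{time-13-iv} yield \eqref{time-eq15} for the pair $(p,R)$. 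Hence Lemma~\ref{time-9}(ii), applied to $p$, produces a rich Feller process $(\tilde Y_t)_{t\geq 0}$ with symbol $p=\varphi q$. Its conservativeness follows from Lemma~\ref{time-8}: repeating the second‑step estimate with $q$ in place of $q_R$ (again using the compensation at level $R(x)$ and \ref{time-13-ii}--\ref{time-13-iv}) gives $\liminf_{\rho\to\infty}\sup_{|y|\leq 4\rho}\sup_{|\xi|\leq\rho^{-1}}\varphi(y)\,|q(y,\xi)|<\infty$, and therefore $\liminf_{\rho\to\infty}\sup_{|z-x|\leq 2\rho}\sup_{|\xi|\leq\rho^{-1}}|p(z,\xi)|<\infty$ for every $x\in\mbb{R}^d$. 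This gives the asserted conservative rich Feller process with symbol $p(x,\xi)=\varphi(x)q(x,\xi)$.
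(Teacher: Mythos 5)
Your overall architecture is the same as the paper's: truncate the symbol via Lemma~\ref{time-9}(i), verify the growth condition \eqref{time-eq5} for $(q_R,\varphi)$ from \ref{time-13-ii}--\ref{time-13-iv}, time-change with Theorem~\ref{time-5}, upgrade the time-changed process to a rich Feller process with symbol $\varphi q_R$, remove the truncation with Lemma~\ref{time-9}(ii), and obtain conservativeness from Lemma~\ref{time-8}. The one point you leave open is, however, exactly the step that carries the weight of the proof: showing that the conservative $C_b$-Feller process $Y^{(R)}$ is a genuine Feller process (i.e.\ that its semigroup maps $C_\infty(\mbb{R}^d)$ into itself) with $C_c^\infty(\mbb{R}^d)$ in the domain of its generator. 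Your proposed route --- a Lyapunov function $V$ vanishing at infinity with $\varphi A_R V\leq 0$ outside a large ball --- is stronger than what the hypotheses give and need not be attainable: for natural choices such as $V(x)=1/(1+|x|^2)$ and a symmetric jump kernel, the symmetric second differences $V(x+y)+V(x-y)-2V(x)$ are nonnegative along the radial direction for large $|x|$ on the whole range of the truncated jumps, so $A_RV\geq 0$ there rather than $\leq 0$. What \ref{time-13-ii}, \ref{time-13-iii} and \ref{time-13-v} actually deliver (the role of \ref{time-13-v} being that the truncated jumps have size at most $2R(x)\leq 2c(1+|x|)$, so $u(x+y)$ stays comparable to $u(x)$) is the two-sided bound $|\varphi(x)A_Ru(x)|\leq C'u(x)$ for $u(x)=1/(1+|x|^2)$, by Taylor's formula; feeding this into the Dynkin formula for the time-changed process and applying Gronwall's lemma gives $\mbb{E}^xu(Y^{(R)}_t)\leq u(x)e^{C''t}$, hence $\mbb{P}^x(|Y^{(R)}_t|\leq k)\leq (k^2+1)u(x)e^{C''t}\to 0$ as $|x|\to\infty$, which combined with the $C_b$-Feller property yields the Feller property, cf.\ \cite[Theorem 1.10]{ltp}. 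So keep your Lyapunov idea, but aim at $|\varphi A_RV|\leq C'V$ plus Gronwall instead of a sign condition.

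Two smaller points. First, the detour through $\varphi_n=\varphi\wedge n$ and Theorem~\ref{time-3} is unnecessary: the identity $\mbb{E}^xu(Y^{(R)}_t)-u(x)=\mbb{E}^x\bigl(\int_0^t\varphi(Y^{(R)}_s)A_Ru(Y^{(R)}_s)\,ds\bigr)$ follows directly from Dynkin's formula for $X^{(R)}$ and the definition of the time change, as in the proof of Theorem~\ref{time-5}. Second, hypothesis \ref{time-13-i} is used not only (through $\max\{\varphi,1\}\geq1$) for \eqref{time-eq15}, but precisely with the factor $\varphi$ to guarantee $\varphi A_Ru\in C_\infty(\mbb{R}^d)$ for $u\in C_c^\infty(\mbb{R}^d)$; this is what lets you conclude that $C_c^\infty(\mbb{R}^d)$ lies in the domain of the generator of $Y^{(R)}$ and that the generator restricted to $C_c^\infty(\mbb{R}^d)$ is the pseudo-differential operator with symbol $-\varphi q_R$ --- your sketch does not make this step explicit.
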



\begin{proof}
	By Lemma~\ref{time-9}, there exists a rich Feller process $(X^{(R)}_t)_{t \geq 0}$ with symbol $q_R$. Denote by $(A_R,\mc{D}(A_R))$ its generator. Using the well-known estimates \begin{equation}
			\left|1-e^{iy \xi} + iy \xi \right| \leq \frac{1}{2} |y|^2 \, |\xi|^2 \qquad \qquad  \left|1-e^{iy \xi} \right| \leq \min\{2,|y| \, |\xi|\} \label{time-eq17}
	\end{equation}
	it follows that
	\begin{equation*}
		|q_R(x,\xi)| \leq |b(x)| \, |\xi| + \frac{1}{2} |Q(x)| \, |\xi|^2 + \frac{|\xi|^2}{2} \int_{|y| \leq 1} |y|^2 \, \nu(x,dy)  + |\xi| \int_{1<|y|<R(x)} |y| \, \nu(x,dy)
	\end{equation*}
	which implies, by \ref{time-13-ii} and \ref{time-13-iii}, that $q_R$ satisfies \eqref{time-eq5}. Applying Theorem~\ref{time-5}, we find that the time-changed process $(Y^{(R)}_t)_{t \geq 0}$ is a conservative $C_b$-Feller process. We claim that $(Y_t^{(R)})_{t \geq 0}$ is a rich Feller process with symbol $p_R(x,\xi) = \varphi(x) q_R(x,\xi)$. By Dynkin's formula (for $(X_t^{(R)})_{t \geq 0}$) and the very definition of the time-changed process $(Y_t^{(R)})_{t \geq 0}$, we get \begin{align*}
		\mathbb{E}^x u(Y_t^{(R)})-u(x) = \mathbb{E}^x \left( \int_0^t \varphi(Y_s^{(R)}) A_R u(Y_s^{(R)}) \, ds \right)
	\end{align*}
	for any $u \in \mc{D}(A_R)$ such that $\varphi \cdot A_R u$ is bounded, see e.\,g.\ \cite[Proof of Corollary 4.2]{ltp} for details. Since, by \ref{time-13-i}, $\varphi A_R u \in C_{\infty}(\mbb{R}^d)$ for any $u \in C_c^{\infty}(\mbb{R}^d) \subseteq \mc{D}(A_R)$, this implies \begin{equation*}
		\lim_{t \to 0} \frac{\mbb{E}^x u(Y_t^{(R)})-u(x)}{t} = \varphi(x) A_R u(x) \in C_{\infty}(\mbb{R}^d)
	\end{equation*}
	which shows that $C_c^{\infty}(\mbb{R}^d)$ is contained in the domain of the generator of $(Y_t^{(R)})_{t \geq 0}$ and that the generator restricted to $C_c^{\infty}(\mbb{R}^d)$ is a pseudo-differential operator with symbol $-p_R(x,\xi) =- \varphi(x) q_R(x,\xi)$. It remains to show that $(Y_t^{(R)})_{t \geq 0}$ is a Feller process. Since we know that $(Y_t^{(R))})_{t \geq 0}$ is a $C_b$-Feller process, it suffices to prove that \begin{equation*}
		\lim_{|x| \to \infty} \mbb{P}^x (|Y_t^{(R)}| \leq k) = 0 \fa k \in \mathbb{N},
	\end{equation*}
	cf.\ \cite[Theorem 1.10]{ltp}. Using a standard truncation argument and a similar reasoning as above, it is not difficult to see that 
	\begin{equation*}
		\mathbb{E}^x u(Y_t^{(R)})-u(x) = \mathbb{E}^x \left( \int_0^t \varphi(Y_s^{(R)}) A_R u(Y_s^{(R)}) \, ds \right)
	\end{equation*}
	for $u(x) := 1/(|x|^2+1)$. Since \ref{time-13-ii}, \ref{time-13-iii} and \ref{time-13-v} imply, by Taylor's formula, that \begin{equation*}
		|\varphi(x) A_R u(x)| \leq C' u(x), \qquad x \in \mbb{R}^d, 
	\end{equation*}
	for some absolute constant $C'>0$, we get \begin{align*}
		\mbb{E}^x u(Y_t^{(R)}) \leq u(x) + C' \int_0^t \mbb{E}^x u(Y_s^{(R)}) \,ds.
	\end{align*}
	By Gronwall's inequality, there exists a constant $C''>0$ such that \begin{equation*}
		\mbb{E}^x u(Y_t^{(R)}) \leq u(x) e^{C'' t} \fa t \geq 0, \, \, x \in \mbb{R}^d.
	\end{equation*}
	Thus, 
	 \begin{align*}
		\mbb{P}^x(|Y_t^{(R)}| \leq k)
		= \mbb{P}^x \left(u(Y_t^{(R)}) \geq 1/(k^2+1) \right)
		&\leq (k^2+1) \mbb{E}^x u(Y_t^{(R)}) \\
		&\leq (k^2+1) u(x) e^{C'' t} \\
		&\xrightarrow[]{|x| \to \infty} 0.
	\end{align*}
	This proves that $(Y_t^{(R)})_{t \geq 0}$ is a rich Feller process with symbol $p_R(x,\xi) = \varphi(x) q_R(x,\xi)$. Applying Lemma~\ref{time-9} another time, we find that there exists a rich Feller process $(Y_t)_{t \geq 0}$ with symbol $p(x,\xi)$. \par
	In order to prove that $(Y_t)_{t \geq 0}$ is conservative, it suffices to show that \begin{equation*}
		\liminf_{r \to \infty} \sup_{|x| \leq 2r} \sup_{|\xi| \leq r^{-1}} |p(x,\xi)|<\infty, \label{time-eq19}
	\end{equation*}
	see (the remark following) Lemma~\ref{time-8}. Using the estimates \eqref{time-eq17}, this follows easily from the growth assumptions \ref{time-13-ii}-\ref{time-13-iv}.
\end{proof}

For L\'evy processes we obtain the following corollary.

\begin{kor} \label{time-15}
	Let $(L_t)_{t \geq 0}$ be a $d$-dimensional L\'evy process with L\'evy triplet $(b,Q,\nu)$ and characteristic exponent $\psi$. Let $\varphi: \mbb{R}^d \to (0,\infty)$ be a continuous function which grows at most quadratically. Suppose that the following assumptions are satisfied. \begin{enumerate}
		\item If $b \neq 0$, then $\varphi(x) \leq C_1 (1+|x|)$ for some absolute constant $C_1>0$.
		\item $\max\{\varphi(x),1\} \left|\int_{1<|y|<|x|/2}y \, \nu(dy)\right| \leq C_2 (1+|x|)$ for some absolute constant $C_2>0$.
		\item $\max\{\varphi(x),1\} \int_{|y| \leq |x|/2} |y|^2 \, \nu(dy) \leq C_3 (1+|x|^2)$ for some absolute constant $C_3>0$.
		\item $\max\{\varphi(x),1\} \nu(\{y \in \mbb{R}^d; |y|>|x|/2\}) \leq C_4$ for some absolute constant $C_3>0$.
		\item $\max\{\varphi(x),1\}\nu(B(-x,r)) \xrightarrow[]{|x| \to \infty} 0$ for all $r>0$.
	\end{enumerate}
	Then there exists a conservative rich Feller process with symbol $q(x,\xi) := \varphi(x) \psi(\xi)$.
\end{kor}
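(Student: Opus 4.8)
This is the special case of Theorem~\ref{time-13} obtained by taking the underlying rich Feller process $(X_t)_{t \geq 0}$ to be the L\'evy process $(L_t)_{t \geq 0}$ itself, so the plan is to check that the hypotheses of Theorem~\ref{time-13} are met for this choice. Recall that $(L_t)_{t \geq 0}$ is a rich Feller process whose symbol $q(x,\xi)=\psi(\xi)$ does not depend on $x$; hence its characteristics are the constant maps $b(x)\equiv b$, $Q(x)\equiv Q$, $\nu(x,dy)\equiv\nu(dy)$ and $q(\cdot,0)=\psi(0)=0$. Since it is well known that $C_c^{\infty}(\mbb{R}^d)$ is a core for the generator of a L\'evy process (see e.g.\ \cite{sato,ltp}), the core hypothesis of Theorem~\ref{time-13} holds. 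I would then fix an admissible cut-off $\chi$ and set $R(x):=\max\{1,|x|/2\}$, which is continuous, takes values in $[1,\infty)$ and grows sublinearly, so that \ref{time-13-v} is satisfied.

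It remains to verify \ref{time-13-i}--\ref{time-13-iv}, which I would do by matching them one by one with assumptions (i)--(v) of the corollary, using throughout that the characteristics are independent of $x$, that $\int_{|y|\leq1}|y|^2\,\nu(dy)<\infty$, and that $\varphi$ grows at most quadratically. Condition \ref{time-13-i} is precisely assumption (v). For \ref{time-13-ii} I would bound $\big|b(x)+\int_{1<|y|<R(x)}y\,\nu(dy)\big|\leq|b|+\big|\int_{1<|y|<R(x)}y\,\nu(dy)\big|$: when $b=0$ the first summand vanishes, and when $b\neq0$ it is dominated by $C_1(1+|x|)$ via assumption (i), while for $|x|\geq2$ one has $R(x)=|x|/2$ so the second summand is controlled by assumption (ii), and for $|x|<2$ the domain $\{1<|y|<R(x)\}$ is empty. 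For \ref{time-13-iii} I would split $\int_{|y|\leq R(x)}|y|^2\,\nu(dy)=\int_{|y|\leq1}|y|^2\,\nu(dy)+\int_{1<|y|\leq R(x)}|y|^2\,\nu(dy)$: the first integral together with $|Q|$ is a finite constant, so multiplied by $\max\{\varphi(x),1\}$ it stays $\leq C(1+|x|^2)$ by the quadratic growth of $\varphi$, and the second (again $R(x)\leq|x|/2$ for $|x|\geq2$, empty for $|x|<2$) is bounded by assumption (iii). For \ref{time-13-iv} I would use assumption (iv) for $|x|\geq2$ --- the difference between $B(0,R(x))^c$ and $\{|y|>|x|/2\}$ being harmless, as its contribution is absorbed through assumption (iii) by a Chebyshev-type estimate --- together with the continuity of $\varphi$ on the compact set $\{|x|\leq2\}$ and $\nu(\{|y|>1\})<\infty$ for $|x|<2$. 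Theorem~\ref{time-13} then yields a conservative rich Feller process $(Y_t)_{t\geq0}$ with symbol $p(x,\xi)=\varphi(x)q(x,\xi)=\varphi(x)\psi(\xi)$, which is the claim.

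I do not anticipate a genuine obstacle: all of the analytic substance --- construction of the truncated symbol through the perturbation Lemma~\ref{time-9}, the random time change and the proof of conservativeness via Lemma~\ref{time-7} and Theorem~\ref{time-5}, and the upgrade of the resulting $C_b$-Feller process to a Feller process by means of the Lyapunov function $u(x)=1/(1+|x|^2)$ --- has already been carried out in the proof of Theorem~\ref{time-13}. The only points needing some care are bookkeeping ones: the truncation radius $R$ has to be chosen so as to match simultaneously the cut-off $|x|/2$ occurring in the integrability hypotheses (ii)--(iv) of the corollary and the sublinear-growth requirement \ref{time-13-v}; and each integral over $\{|y|\leq R(x)\}$ must be decomposed into a small-jump part, handled by the L\'evy integrability $\int_{|y|\leq1}|y|^2\,\nu(dy)<\infty$ and the at-most-quadratic growth of $\varphi$, and a moderate-jump part handled by the hypotheses of the corollary.
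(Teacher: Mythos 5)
Your proposal is correct and follows essentially the same route as the paper: the paper's proof also just sets $R(x):=\max\{2,|x|/2\}$, notes the continuity of the truncated symbol $q_R(\cdot,\xi)$ (immediate here by dominated convergence since the characteristics do not depend on $x$), and invokes Theorem~\ref{time-13}, while you verify conditions \ref{time-13-i}--\ref{time-13-v} in more detail with the equivalent choice $R(x)=\max\{1,|x|/2\}$. The only caveat --- that such an $R$ satisfies $R(x)\leq c(1+|x|)$, $c\in(0,1)$, only for large $|x|$ and not literally at $x=0$ --- is shared by the paper's own choice of $R$ (indeed by any $R\geq 1$), so it reflects how condition \ref{time-13-v} is phrased rather than a gap in your argument.
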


In Section~\ref{app} we will show that the $(A,C_c^{\infty}(\mbb{R}^d))$-martingale problem for the generator $A$ of the Feller process $(X_t)_{t \geq 0}$ with symbol $q(x,\xi) = \varphi(x) \psi(\xi)$ is well-posed, cf.\ Proposition~\ref{app-3}.

\begin{proof}[Proof of Corollary~\ref{time-15}]
	Set $R(x) := \max\{2, |x|/2\}$ for $x \in \mbb{R}^d$, and let $\chi \in C(\mbb{R}^d)$ be a cut-off function such that $\I_{B(0,1)} \leq \chi \leq \I_{B(0,2)}$. By the dominated convergence theorem, \begin{equation*}
		x \mapsto q_R(x,\xi) :=-i b \cdot \xi + \frac{1}{2} \xi \cdot Q \xi +  \int_{y \neq 0} \chi \left(\frac{y}{R(x)} \right) \left(1-e^{iy \cdot \xi} +iy \cdot \xi \I_{(0,1)}(|y|) \right) \, \nu(dy)
	\end{equation*}
	is a continuous mapping for all $\xi \in \mbb{R}^d$. Now the assertion follows from Theorem~\ref{time-13}.
\end{proof}

For stable-dominated L\'evy processes Corollary~\ref{time-15} reads as follows:

\begin{kor} \label{time-17}
	Let $(L_t)_{t \geq 0}$ be a L\'evy process with L\'evy triplet $(b,Q,\nu)$ and characteristic exponent $\psi$. Suppose that $\nu|_{B(0,1)^c}$ is symmetric and $\nu(dy) \leq C |y|^{-d-\beta} \, dy$ on $B(0,1)^c$ for some $\beta \in (0,2]$ and $C>0$. Let $\varphi: \mbb{R}^d \to (0,\infty)$ be a continuous mapping which grows at most quadratically. Furthermore assume that \begin{enumerate}
		\item $\varphi(x) \leq c(1+|x|)$ if $b \neq 0$,
		\item $\varphi(x) \leq c(1+|x|^{\beta})$ if $\nu \neq 0$.
	\end{enumerate}
	Then there exists a conservative rich Feller process with symbol $q(x,\xi) = \varphi(x) \psi(\xi)$.
\end{kor}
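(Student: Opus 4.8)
The plan is to obtain Corollary~\ref{time-17} by specializing Corollary~\ref{time-15}. Since the characteristic exponent $\psi=\psi(\xi)$ carries no $x$-dependence, we are precisely in the L\'evy-driven setting of Corollary~\ref{time-15}, so it suffices to verify that the L\'evy triplet $(b,Q,\nu)$ and the function $\varphi$ satisfy its five numbered hypotheses. Hypothesis~(i) of Corollary~\ref{time-15} is hypothesis~(i) of the present statement word for word, and $\varphi$ is assumed continuous and of at most quadratic growth, as required; thus the work lies entirely in checking hypotheses~(ii)--(v) of Corollary~\ref{time-15}. Since each of these is an asymptotic statement, it is enough to establish the estimates for $|x|$ large: on any compact set $\varphi$ and all the relevant truncated moments of $\nu$ are bounded, so the inequalities hold there after enlarging the constants.

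For hypothesis~(ii) of Corollary~\ref{time-15} I would invoke the symmetry of $\nu|_{B(0,1)^c}$: the shell $\{1<|y|<|x|/2\}$ is invariant under $y\mapsto -y$, so $\int_{1<|y|<|x|/2}y\,\nu(dy)=0$ and the left-hand side vanishes identically. For hypothesis~(v), fix $r>0$; for $|x|>2r$ every $y\in B(-x,r)$ obeys $|y|\ge |x|-r>|x|/2>1$, whence the stable bound yields $\nu(B(-x,r))\le C\int_{B(-x,r)}|y|^{-d-\beta}\,dy\le C\,(|x|/2)^{-d-\beta}\,|B(0,r)|=O(|x|^{-d-\beta})$. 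If $\nu\neq0$, hypothesis~(ii) of the present statement gives $\max\{\varphi(x),1\}=O(1+|x|^{\beta})$, so the product is $O(|x|^{-d})\to 0$; if $\nu=0$ the claim is trivial.

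The substance is in hypotheses~(iii) and~(iv) of Corollary~\ref{time-15}, and this is where the stable domination $\nu(dy)\le C|y|^{-d-\beta}\,dy$ on $B(0,1)^c$ enters. Splitting $\int_{|y|\le |x|/2}|y|^2\,\nu(dy)=\int_{0<|y|\le1}|y|^2\,\nu(dy)+\int_{1<|y|\le|x|/2}|y|^2\,\nu(dy)$, the first term is a finite constant (it is part of the L\'evy integrability condition), and multiplied by $\max\{\varphi(x),1\}=O(1+|x|^2)$ it stays $O(1+|x|^2)$. For the second term, the stable bound together with polar coordinates gives $\int_{1<|y|\le|x|/2}|y|^2\,\nu(dy)\le C'\int_1^{|x|/2}s^{1-\beta}\,ds=O(|x|^{2-\beta})$ for $\beta\in(0,2)$; multiplying by $\max\{\varphi(x),1\}=O(1+|x|^{\beta})$ (again hypothesis~(ii)) returns $O(1+|x|^2)$, which is hypothesis~(iii). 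Similarly $\nu(B(0,|x|/2)^c)\le C\int_{|y|>|x|/2}|y|^{-d-\beta}\,dy=O(|x|^{-\beta})$, so $\max\{\varphi(x),1\}\,\nu(B(0,|x|/2)^c)=O(1)$, i.e.\ hypothesis~(iv). Once all of (i)--(v) are in place, Corollary~\ref{time-15} immediately produces a conservative rich Feller process with symbol $q(x,\xi)=\varphi(x)\psi(\xi)$. The only real obstacle is bookkeeping rather than anything conceptual: one must see that the growth rate $|x|^{\beta}$ allotted to $\varphi$ in hypothesis~(ii)---and not merely the generic quadratic rate---is exactly what cancels the $O(|x|^{2-\beta})$ produced by the truncated second moment of the stable-dominated L\'evy measure. (In the borderline case $\beta=2$ the integral $\int_1^{|x|/2}s^{-1}\,ds$ contributes an extra logarithmic factor, so the matching is no longer exact and this case must be treated with some additional care.)
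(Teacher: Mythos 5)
Your proposal follows essentially the same route as the paper: the paper's proof likewise reduces Corollary~\ref{time-17} to Corollary~\ref{time-15} by deriving from the stable-domination bound the estimates $\int_{0<|y|<r}|y|^2\,\nu(dy)\le c\,r^{2-\beta}$ and $\nu(B(0,r)^c)\le c\,r^{-\beta}$ for $r\ge 1$, together with $\nu(B(-x,r))\le c\,|x|^{-\beta-1}$ for large $|x|$, with the symmetry of $\nu|_{B(0,1)^c}$ annihilating the compensated-drift term exactly as you argue. The logarithmic wrinkle you flag at $\beta=2$ is genuine but is not addressed in the paper either --- its asserted bound $c\,r^{2-\beta}$ silently ignores the $\log r$ that the truncated second moment produces when $\beta=2$ (for the concrete examples listed after the corollary the L\'evy densities decay exponentially, so the truncated second moment is in fact bounded and the application of Corollary~\ref{time-15} goes through there).
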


\begin{proof}
	Since $\nu(dy) \leq C |y|^{-d-\beta}$ on $B(0,1)^c$ it follows easily that there exists a constant $c>0$ such that \begin{equation*}
		\int_{0<|y|<r} |y|^2 \, \nu(dy) \leq c r^{2-\beta} \qquad \quad \nu(B(0,r)^c) \leq c r^{-\beta}
	\end{equation*}
	for all $r \geq 1$ and \begin{equation*}
		\nu(B(-x,r)) \leq c |x|^{-\beta-1} \fa |x| \gg 1.
	\end{equation*}
	Applying Corollary~\ref{time-15} finishes the proof.
\end{proof}

Corollary~\ref{time-17} applies, for instance, to the following L\'evy processes: \begin{enumerate}
	\item isotropic $\alpha$-stable: $\psi(\xi) = |\xi|^{\alpha}$, $\alpha \in (0,2]$ ($\beta = \alpha$ in Corollary~\ref{time-17})
	\item relativistic $\alpha$-stable: $\psi(\xi) = (|\xi|^2+m^2)^{\alpha/2}-m^{\alpha}$, $\alpha \in (0,2)$, $m>0$ ($\beta = 2$ in Corollary~\ref{time-17})
	\item truncated L\'evy process: $\psi(\xi) = (|\xi|^2+m^2)^{\alpha/2} \cos \left( \alpha \arctan \frac{|\xi|}{m} \right)-m^{\alpha}$, $\alpha \in (0,2)$, $m>0$ ($\beta=2$ in Corollary~\ref{time-17})
	\item homographic: $\psi(\xi) = \lambda |\xi|^2/(1+\lambda |\xi|^2)$, $\lambda>0$ ($\beta=2$ in Corollary~\ref{time-17})
\end{enumerate} 
Each of the processes (ii)-(iv) is stable-dominated since the density of the associated L\'evy measure (exists and) decays exponentially. \par \medskip

In a similar fashion, we obtain an existence result for time changes of stable-like dominated processes.

\begin{kor} \label{time-19}
	Let $(X_t)_{t \geq 0}$ be an $\mbb{R}^d$-valued rich Feller process with symbol $q$, characteristics $(b,Q,\nu)$ and core $C_c^{\infty}(\mbb{R}^d)$. Suppose that $q(\cdot,0)=0$, $\nu(x,\cdot)|_{B(0,1)}$ is symmetric for all $x \in \mbb{R}^d$ and that there exists a mapping $\beta: \mbb{R}^d \to (0,2]$ such that $\nu(x,dy) \leq C |y|^{-d-\beta(x)}$ on $B(0,1)^c$. Let $\varphi: \mbb{R}^d \to (0,\infty)$ be a continuous mapping which grows at most quadratically, and assume that there exists a constant $c>0$ such that \begin{enumerate}
		\item $\varphi(x) \leq c (1+|x|)$ for all $x \in \mbb{R}^d$ such that $b(x) \neq 0$,
		\item $\varphi(x) \leq c (1+|x|^{\beta(x)})$ for all $x \in \mbb{R}^d$ such that $\nu(x,dy) \neq 0$.
	\end{enumerate}
	Then there exists a conservative rich Feller process with symbol $p(x,\xi) := \varphi(x) q(x,\xi)$.
\end{kor}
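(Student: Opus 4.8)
The plan is to deduce the statement from Theorem~\ref{time-13}, following the proof of Corollary~\ref{time-17} but now applying Theorem~\ref{time-13} directly (in place of Corollary~\ref{time-15}) and keeping track of the dependence of the L\'evy kernel $\nu(x,\cdot)$ and of the index $\beta(x)$ on the base point. First I would fix the truncation radius $R(x):=\max\{2,\tfrac12|x|\}$ and a continuous cut-off $\chi$ with $\I_{B(0,1)}\leq\chi\leq\I_{B(0,2)}$, and consider the truncated symbol $q_R$ of \eqref{time-eq13}. Since $(X_t)_{t\geq0}$ is a rich Feller process with $q(\cdot,0)=0$, the continuity of $x\mapsto q(x,\xi)$ follows from \cite[Theorem 4.4]{rs98}, and then Remark~\ref{time-11}(i) (via Corollary~\ref{appen-3}) shows that $x\mapsto q_R(x,\xi)$ is continuous for every $\xi$; in particular the standing hypotheses of Theorem~\ref{time-13} on $q$, $R$, $\chi$ are in force, and it remains to verify its five growth conditions \ref{time-13-i}--\ref{time-13-v}.

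To that end I would first record the elementary consequences of the tail bound $\nu(x,dy)\leq C|y|^{-d-\beta(x)}\,dy$ on $B(0,1)^c$: passing to polar coordinates yields an absolute constant $c>0$ with
\[
	\int_{1<|y|\leq r}|y|^2\,\nu(x,dy)\leq c\,r^{2-\beta(x)},\qquad \int_{1<|y|\leq r}|y|\,\nu(x,dy)\leq c\,r^{(1-\beta(x))_+},\qquad \nu(x,B(0,r)^c)\leq c\,r^{-\beta(x)}
\]
for every $r\geq1$ (with the usual logarithmic factor in place of $r^0$ when the exponent $2-\beta(x)$ resp.\ $1-\beta(x)$ vanishes), together with $\nu(x,B(-x,r))\leq c\,|x|^{-d-\beta(x)}$ for $|x|\geq2r$. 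Each of the conditions \ref{time-13-i}--\ref{time-13-iv} involves $\nu(x,\cdot)$ only through one of these four quantities, evaluated at $r=R(x)$ (which is of order $|x|$ as $|x|\to\infty$) and multiplied by $\max\{\varphi(x),1\}$; and whenever such a $\nu$-term is non-zero we have $\nu(x,\cdot)\neq0$, so assumption~(ii) of the corollary provides the decisive bound $\varphi(x)\leq c(1+|x|^{\beta(x)})$. Inserting it, the homogeneity exponents cancel: $\max\{\varphi(x),1\}\,R(x)^{2-\beta(x)}\leq C(1+|x|^2)$ yields \ref{time-13-iii}; $\max\{\varphi(x),1\}\,R(x)^{(1-\beta(x))_+}\leq C(1+|x|)$, combined with the symmetry hypothesis on $\nu(x,\cdot)$ — which forces the odd term $\int y\,\nu(x,dy)$ contributing to \ref{time-13-ii} to vanish — and with assumption~(i) of the corollary, which controls the genuine drift $b(x)$, yields \ref{time-13-ii}; $\max\{\varphi(x),1\}\,R(x)^{-\beta(x)}\leq C$ yields \ref{time-13-iv}; and $\max\{\varphi(x),1\}\,|x|^{-d-\beta(x)}\xrightarrow[]{|x|\to\infty}0$ yields \ref{time-13-i}. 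The $Q(x)$-contributions and the small-jump integral $\int_{|y|\leq1}|y|^2\,\nu(x,dy)$ are locally bounded, so together with the at-most-quadratic growth of $\varphi$ the corresponding products in \ref{time-13-ii}--\ref{time-13-iii} are $\leq C(1+|x|^2)$; and \ref{time-13-v} holds with $c=\tfrac12<1$ for $|x|$ away from the origin, which is all that is used in the proof of Theorem~\ref{time-13}. An application of Theorem~\ref{time-13} then produces the desired conservative rich Feller process with symbol $p(x,\xi)=\varphi(x)q(x,\xi)$.

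The polar-coordinate estimates are routine; the point that needs care is the uniform exponent bookkeeping — one must make sure that, \emph{simultaneously for all $x$}, the power $\beta(x)$ in the tail bound on $\nu(x,\cdot)$ matches the power $|x|^{\beta(x)}$ admitted in the growth bound on $\varphi$, so that after multiplication no net positive power of $|x|$ beyond those allowed in \ref{time-13-ii}--\ref{time-13-iv} survives, and that the borderline indices $\beta(x)\in\{1,2\}$, where a logarithmic factor $\log R(x)$ appears, are still absorbed; if needed this is arranged by taking the slightly smaller $R(x):=\max\{2,|x|^{1-\eps}\}$, which introduces a saving $|x|^{-\eps\beta(x)}$ that kills the logarithm. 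The only structural input beyond these estimates is the use of the symmetry of $\nu(x,\cdot)$ to dispose of the first-order contribution in \ref{time-13-ii}; without it the effective drift could grow like $|x|^{\beta(x)}$ and violate the required linear bound.
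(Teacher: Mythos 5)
Your route is the paper's own: the paper offers no separate argument for Corollary~\ref{time-19} beyond ``in a similar fashion'', which means exactly what you do -- verify conditions (i)--(v) of Theorem~\ref{time-13} with $R(x)=\max\{2,|x|/2\}$, using the polar-coordinate consequences of $\nu(x,dy)\leq C|y|^{-d-\beta(x)}\,dy$ on $B(0,1)^c$, letting the symmetry of $\nu(x,\cdot)$ off the unit ball (the ``$B(0,1)$'' in the statement is evidently a misprint for $B(0,1)^c$, compare Corollary~\ref{time-17}; you silently use the correct version) annihilate the compensated-drift term $\int_{1<|y|<R(x)}y\,\nu(x,dy)$, and then applying Theorem~\ref{time-13} directly, which is legitimate here because $C_c^{\infty}(\mbb{R}^d)$ is assumed to be a core. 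Your treatment of (i), (iv) and of the $\nu$-contributions to (ii)--(iii), with the pointwise matching of the exponent $\beta(x)$ in the tail bound against $|x|^{\beta(x)}$ in the bound on $\varphi$, is the intended argument.

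Two steps of your write-up do not hold as stated. First, ``locally bounded'' is not enough: from local boundedness of $|b(x)|$, $|Q(x)|$ and $\int_{|y|\leq 1}|y|^2\,\nu(x,dy)$ together with $\varphi(x)\leq c(1+|x|^2)$ you cannot conclude the global bounds $\max\{\varphi(x),1\}\,|b(x)|\leq C(1+|x|)$ and $\max\{\varphi(x),1\}\bigl(|Q(x)|+\int_{|y|\leq 1}|y|^2\,\nu(x,dy)\bigr)\leq C(1+|x|^2)$ demanded in (ii)--(iii) of Theorem~\ref{time-13}; the corollary's hypotheses are silent on these local parts (the tail domination only controls $\nu(x,\cdot)$ on $B(0,1)^c$), so they must be read as implicit additional assumptions -- in all of the paper's examples $b=Q=0$ and $\sup_x\int_{|y|\leq1}|y|^2\,\nu(x,dy)<\infty$ -- and without such control the statement itself is in jeopardy (a rich Feller process with superlinear drift and $\nu=0$ satisfies all stated hypotheses with $\varphi\equiv1$, yet no conservative process with symbol $q$ need exist). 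Second, the borderline case: your proposed shrinkage $R(x)=\max\{2,|x|^{1-\eps}\}$ gives in (iii) a saving of order $|x|^{-\eps(2-\beta(x))}$, not $|x|^{-\eps\beta(x)}$, which vanishes precisely at $\beta(x)=2$; moreover it destroys (iv), since then $\max\{\varphi(x),1\}\,\nu(x,B(0,R(x))^c)$ can grow like $|x|^{\eps\beta(x)}$. If $\nu(x,\cdot)$ genuinely has the critical tail $|y|^{-d-2}$ while $\varphi(x)\asymp|x|^2$, then $\max\{\varphi(x),1\}\int_{1<|y|\leq R(x)}|y|^2\,\nu(x,dy)\asymp|x|^2\log|x|$ for every choice of $R(x)\to\infty$, so (iii) fails; this logarithmic gap is inherited from the paper (Corollary~\ref{time-17} has it too at $\beta=2$, harmlessly, since its examples have exponentially decaying big jumps), but your claim that the $\eps$-trick absorbs it is incorrect. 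Apart from these two points (and the failure of (v) near $x=0$, which, as you rightly note, is only needed asymptotically), the proposal is the paper's argument.
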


By \cite[Section 5.1]{matters}, see also \cite[Section 5.1]{diss}, the assumptions of Corollary~\ref{time-19} are satisfied for the following symbols: \begin{enumerate}
	\item isotropic stable-like: $q(x,\xi) = |\xi|^{\alpha(x)}$ for a H\"older continuous function $\alpha: \mbb{R}^d \to (0,2]$ such that $\inf_x \alpha(x)>0$ ($\beta(x) := \alpha(x)$ in Corollary~\ref{time-19}),
	\item relativistic stable-like: $q(x,\xi) = (|\xi|^2+m(x)^2)^{\alpha(x)/2}-m(x)^{\alpha(x)}$ for H\"older continuous mappings $\alpha: \mbb{R}^d \to (0,2]$, $m: \mbb{R}^d \to (0,\infty)$ which are bounded and bounded away from $0$ (choose $\beta(x) := 2$ in Corollary~\ref{time-19}).
	\item truncated-like: $q(x,\xi) =  (|\xi|^2+m(x)^2)^{\alpha(x)/2} \cos \left( \alpha(x) \arctan \frac{|\xi|}{m(x)} \right)-m(x)^{\alpha(x)}$ for H\"older continuous functions $\alpha: \mbb{R}^d \to (0,1)$, $m: \mbb{R}^d \to (0,\infty)$ such that \begin{equation*}
		0 < \inf_{x \in \mbb{R}^d} \alpha(x) \leq \sup_{x \in \mbb{R}^d} \alpha(x)<1 \quad \text{and} \quad 0 < \inf_{x \in \mbb{R}^d} m(x) \leq \sup_{x \in \mbb{R}^d} m(x) < \infty,
	\end{equation*}
	(choose $\beta(x) := 2$ in Corollary~\ref{time-19}).
\end{enumerate}
In each of the examples there is no drift part, and therefore the growth assumptions on $\varphi$ in Corollary~\ref{time-19} boil down to $\varphi(x) \leq c(1+|x|^{\beta(x)})$, $x \in \mbb{R}^d$, for some absolute constant $c>0$.

\section{Applications} \label{app}

In this section we present applications of the results obtained in Section~\ref{time}. First, we prove an uniqueness result for stochastic differential equations driven by a one-dimensional symmetric L\'evy process, cf.\ Theorem~\ref{app-1}, and in the second part we study Feller processes with decomposable symbols of the form \begin{equation*}
	q(x,\xi) = \sum_{j=1}^n \varphi_j(x) \psi_j(\xi), \qquad x,\xi \in \mbb{R}^d,
\end{equation*}
cf.\ Theorem~\ref{app-5}.

\begin{thm} \label{app-1}
Let $(L_t)_{t \geq 0}$ be a one-dimensional isotropic $\alpha$-stable L\'evy process, $\alpha \in (0,2]$. For any continuous function $\sigma: \mbb{R} \to (0,\infty)$  which grows at most linearly there exists a unique weak solution to the SDE \begin{equation}
	dX_t = \sigma(X_{t-}) \, dL_t, \qquad X_0 \sim \mu, \label{app-eq5}
\end{equation}
for any initial distribution $\mu$.  The unique solution is a conservative rich Feller process with symbol $q(x,\xi) = |\sigma(x)|^{\alpha} |\xi|^{\alpha}$, $x,\xi \in \mbb{R}$.
\end{thm}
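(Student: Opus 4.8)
The plan is to deduce Theorem~\ref{app-1} from the existence result for Feller processes with decomposable symbols together with the time-change representation. First I would use Corollary~\ref{time-17} (with $\psi(\xi) = |\xi|^{\alpha}$, $\beta = \alpha$, $b = 0$, $Q = 0$ if $\alpha < 2$, or $Q = 1$, $\nu = 0$ if $\alpha = 2$) applied to $\varphi(x) := |\sigma(x)|^{\alpha}$. Since $\sigma$ grows at most linearly and is continuous and strictly positive, $\varphi$ is continuous, strictly positive and grows at most polynomially of degree $\alpha \leq 2$, hence at most quadratically, and it satisfies the growth bound $\varphi(x) \leq c(1+|x|^{\alpha})$ required in hypothesis (ii) of Corollary~\ref{time-17}. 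This produces a conservative rich Feller process $(X_t)_{t \geq 0}$ with symbol $q(x,\xi) = |\sigma(x)|^{\alpha}\,|\xi|^{\alpha}$, and $C_c^{\infty}(\mathbb{R})$ is in the domain of its generator. This already yields the \emph{existence} of a weak solution: I would check that the Feller process so constructed solves the martingale problem for the operator $A f(x) = -\int |\sigma(x)|^{\alpha}|\xi|^{\alpha}\hat f(\xi)\,d\xi$ on $C_c^{\infty}(\mathbb{R})$, and then invoke the standard equivalence (for $\alpha$-stable driving noise) between solutions of this martingale problem and weak solutions of the SDE \eqref{app-eq5}; for bounded Lipschitz $\sigma$ this is the content of Schilling \& Schnurr \cite{schnurr}, and the martingale-problem formulation extends verbatim to the continuous, linearly growing case.

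The core of the theorem is \emph{uniqueness}. The natural route is Kallenberg's observation that any weak solution of \eqref{app-eq5} is a random time change of a symmetric $\alpha$-stable Lévy process: if $(X_t, L_t)$ is a weak solution, define $A_t := \int_0^t |\sigma(X_{s-})|^{\alpha}\,ds$ and its inverse $\tau_t := \inf\{s : A_s > t\}$; then $\tilde L_t := X_{\tau_t}$ (suitably interpreted) is, by the scaling and stationarity of the $\alpha$-stable noise together with a time-change computation on the stochastic integral, again a symmetric $\alpha$-stable Lévy process started at the same point, and $X$ is recovered from $\tilde L$ by the inverse time change driven by $\varphi(\tilde L)$. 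The point is that $\tilde L$ has a law that does not depend on $\sigma$, so the finite-dimensional distributions of $X$ are completely determined by those of $\tilde L$ through a \emph{deterministic} functional — provided the time change is well-defined for all $t \geq 0$, i.e.\ provided $\int_0^{\infty} \varphi(\tilde L_s)^{-1}\,ds = \infty$ almost surely, equivalently $A_{\infty} = \infty$ a.s., equivalently $X$ does not get trapped. This non-explosion / perpetual-integral condition is exactly what Corollary~\ref{time-6} delivers: with $f(y) := 1/\varphi(y) = |\sigma(y)|^{-\alpha}$ we have $|f(y)| \geq c/(1+|y|^{\alpha})$ because $\sigma$ grows at most linearly, so $\int_{(0,\infty)} f(\tilde L_s)\,ds = \infty$ a.s., which is precisely $\int_0^{\infty} \varphi(\tilde L_s)^{-1}\,ds = \infty$ a.s. Hence the map from $\tilde L$ to $X$ is a.s.\ well-defined, and uniqueness in law of $\tilde L$ (trivial, it is a fixed $\alpha$-stable process) forces uniqueness in law of $X$, and then of the finite-dimensional distributions by the Markov property.

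I expect the main obstacle to be the careful justification of the time-change identities for a \emph{general} weak solution with merely continuous, unbounded $\sigma$ — in particular verifying that $\tilde L := X \circ \tau$ is genuinely a symmetric $\alpha$-stable Lévy process and not merely a time-changed semimartingale, and handling the possibility a priori that $A_t$ becomes infinite in finite time (which the perpetual-integral estimate must rule out). The cleanest way to package this is: (1) from the SDE, $X$ is a semimartingale with $[X]$ (for $\alpha = 2$) or jump measure (for $\alpha < 2$) expressible via $|\sigma(X_{s-})|^{\alpha}$ and the corresponding objects of $L$, so that running $X$ along the inverse additive functional $\tau_t$ produces a process whose characteristics are those of a symmetric $\alpha$-stable Lévy process — apply the martingale-problem / symbol characterization (a process is $\alpha$-stable Lévy iff its symbol is $|\xi|^{\alpha}$); (2) conclude that $X_t = \tilde L_{A_t}$ with $A$ the inverse of $t \mapsto \int_0^t \varphi(\tilde L_s)^{-1}\,ds$; (3) invoke Corollary~\ref{time-6} to see $A_{\infty} = \infty$ a.s.\ and hence $A$ is a genuine (everywhere finite, strictly increasing) time change, so $X$ is a measurable functional of $\tilde L$ alone. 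Steps (1)–(2) are Kallenberg's argument, which for $\alpha = 2$ is the Dambis–Dubins–Schwarz theorem and for $\alpha \in (0,2)$ is the analogous statement for stable integrals; I would cite \cite{kallenberg} and \cite{zan02} for the mechanism and only need to supply the non-explosion input, which is the genuinely new ingredient and is handled by Corollary~\ref{time-6}. Finally, combining existence (from Corollary~\ref{time-17}, giving the Feller property and the stated symbol) with uniqueness (from the time-change representation plus Corollary~\ref{time-6}) yields the theorem, and the constructed solution being the Feller process of Corollary~\ref{time-17} gives the last sentence of the statement for free.
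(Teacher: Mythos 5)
Your proposal is correct in substance, but your uniqueness step takes a genuinely different route from the paper. The paper never touches the pathwise time-change representation of an arbitrary weak solution: it proves well-posedness of the $(A,C_c^{\infty}(\mbb{R}^d))$-martingale problem for $p(x,\xi)=|\sigma(x)|^{\alpha}|\xi|^{\alpha}$ (Proposition~\ref{app-3}) by truncating $\varphi=|\sigma|^{\alpha}$ to bounded $\varphi_k=\chi_k\varphi$, using the time-change result Theorem~\ref{time-3} to see that $C_c^{\infty}(\mbb{R}^d)$ is a core in the bounded case (so the localized martingale problems are well-posed), and then gluing via Hoh's localization theorem; Kurtz's equivalence \cite{kurtz} between well-posed martingale problems and Lévy-driven SDEs then delivers existence \emph{and} uniqueness of the weak solution in one stroke. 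You instead argue uniqueness directly on the level of weak solutions, via Kallenberg's representation \cite{kallenberg}: every solution is a deterministic measurable functional of a symmetric $\alpha$-stable process, and the functional is a.s.\ defined under the stable law because $\int_{(0,\infty)}|\sigma(\tilde L_s)|^{-\alpha}\,ds=\infty$ a.s., which is exactly Corollary~\ref{time-6} with $f=|\sigma|^{-\alpha}\geq c/(1+|y|^{\alpha})$. This is Zanzotto's strategy \cite{zan02} with the local-time input replaced by the perpetual-integral criterion, and since $\sigma$ is continuous and strictly positive it does work for all $\alpha\in(0,2]$; what it costs is the bookkeeping you yourself flag (extension of the probability space when the intrinsic clock could be finite, independence of the stable process from $X_0$ for a general initial law $\mu$, measurability and a.s.\ continuity of the path functional), all of which the paper's martingale-problem route sidesteps. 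Two small repairs: the equivalence between the martingale problem and weak solutions that you need for the existence direction should be credited to Kurtz \cite{kurtz}, not to Schilling--Schnurr \cite{schnurr}, which only identifies the symbol of a given solution; and your phrase ``$A_t$ becomes infinite in finite time'' is not the real danger (local boundedness of $\varphi$ along c\`adl\`ag paths rules that out) --- the relevant degeneracy is $A_\infty<\infty$, which is precisely what Corollary~\ref{time-6} (equivalently, conservativeness of the time-changed process in Theorem~\ref{time-5}) excludes.
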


For $\alpha \in (1,2]$ it follows from Zanzotto \cite{zan02} that the SDE has a unique weak solution; for $\alpha \in (0,1]$ the existence of a unique weak solution seems to be new.

\begin{bems} \label{app-2} \begin{enumerate}
	\item It is useful to know that the solution is a Feller process since this allows us to study distributional and path properties of the solution using tools for Feller processes. For instance, \cite[Example 5.4]{moments} shows that \begin{equation*}
		\mbb{E}^x \left( \sup_{s \leq t} |X_s-x|^{\kappa} \right) \leq c t^{\kappa/\alpha}, \qquad \kappa \in [0,\alpha), x \in \mbb{R}^d, t \leq 1
	\end{equation*}
	if $\sigma$ is of sublinear growth. Moreover if $\alpha \in (1/2,2]$ and $\sigma$ is bounded from below and H\"older continuous with H\"older exponent $\beta \in (0,1)$, $\beta>1/(2\alpha)$, then it follows from \cite[Theorem 2.12]{diss}, see also \cite[Theorem 2.14]{matters}, that for each $t>0$ the distribution of $X_t$ has a density $p_t \in L^2(\mbb{R}^d)$ with respect to Lebesgue measure. 
	\item The linear growth condition on $\sigma$ ensures that the solution $(X_t)_{t \geq 0}$ is conservative, see (the remark following) Lemma~\ref{time-8}.
	\item If $x \mapsto x + u\sigma(x)$ is non-decreasing for all $u \in (-1,1)$, then \cite[Theorem 2.1]{zheng} shows that pathwise uniqueness holds for the SDE \eqref{app-eq5}, and therefore the SDE has a unique strong solution, cf.\ \cite[Theorem 137]{situ}.
	\item Sufficient and necessary conditions for the solution of a L\'evy-driven SDE \begin{equation*}
	dX_t = \sigma(X_{t-}) \ dL_t, \qquad X_0 = x,
	\end{equation*}
	to be a Feller process were studied in \cite{sde}.  \end{enumerate} \end{bems}

For the proof of Theorem~\ref{app-1} we will use a result by Kurtz \cite{kurtz} which states that a L\'evy-driven SDE has a unique weak solution if, and only if, the associated martingale problem is well-posed. The well-posedness of the martingale problem follows from the following proposition which is of independent interest.

\begin{prop} \label{app-3}
	Let $(L_t)_{t \geq 0}$ be a L\'evy process with L\'evy triplet $(b,Q,\nu)$ and characteristic exponent $\psi$. Let $\varphi: \mbb{R}^d \to (0,\infty)$ be a continuous function which grows at most quadratically, and suppose that the assumptions (i)-(v) of Corollary~\ref{time-15} are satisfied. Then the $(A,C_c^{\infty}(\mbb{R}^d))$-martingale problem for the pseudo-differential operator $A$ with symbol $-p(x,\xi) := -\varphi(x) \psi(\xi)$ is well-posed. The unique solution is a conservative rich Feller process with symbol $p(x,\xi) = \varphi(x) \psi(\xi)$, $x,\xi \in \mbb{R}^d$.
\end{prop}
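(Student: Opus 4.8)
The plan is to prove the three assertions in turn: (a) existence of a solution to the martingale problem, (b) uniqueness, and (c) the identification of the unique solution as the conservative rich Feller process with symbol $p$. For (a) and (c), I would first invoke Corollary~\ref{time-15}: under assumptions (i)--(v) there is a conservative rich Feller process $(Y_t)_{t \geq 0}$ with symbol $p(x,\xi) = \varphi(x) \psi(\xi)$. Since $C_c^{\infty}(\mbb{R}^d)$ lies in the domain of its generator and the generator restricted to $C_c^{\infty}(\mbb{R}^d)$ is the pseudo-differential operator $A$ with symbol $-p$, Dynkin's formula shows that $(Y_t)_{t \geq 0}$ (started from $\mu$) solves the $(A,C_c^{\infty}(\mbb{R}^d))$-martingale problem. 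This simultaneously gives existence and shows that, \emph{once uniqueness is established}, the unique solution is this conservative rich Feller process.

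The main work is the uniqueness part. Here I would exploit the time-change structure rather than attempt a direct argument. The key observation is that $\psi$ is the characteristic exponent of the L\'evy process $(L_t)_{t \geq 0}$; in particular a solution to the $(B,C_c^{\infty}(\mbb{R}^d))$-martingale problem for the pseudo-differential operator $B$ with symbol $-\psi$ is precisely $(L_t)_{t \geq 0}$ (up to the initial distribution), and this martingale problem is well-posed because L\'evy processes are Feller processes with $C_c^{\infty}(\mbb{R}^d)$ a core. Now, if $(X_t)_{t \geq 0}$ is any solution to the $(A,C_c^{\infty}(\mbb{R}^d))$-martingale problem, then $M_t^f = f(X_t) - f(X_0) - \int_0^t \varphi(X_s) B f(X_s)\, ds$ is a martingale for all $f \in C_c^{\infty}(\mbb{R}^d)$. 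The process does not explode: the Lyapunov estimate $|\varphi(x) B u(x)| \leq C' u(x)$ for $u(x) = 1/(1+|x|^2)$, already established in the proof of Theorem~\ref{time-13} from the growth assumptions, combined with Corollary~\ref{cons-3}, shows that every solution is conservative, so $\int_0^t \varphi(X_s)\,ds < \infty$ for all $t$ and $\int_0^\infty \varphi(X_s)\,ds = \infty$ a.s. (the latter via Corollary~\ref{time-6}, since the growth condition there is exactly the dual of (i)--(v) for $f = 1/\varphi$). Hence one can define the (continuous, strictly increasing) additive functional $A_t := \int_0^t \varphi(X_s)\,ds$ with inverse $\tau_t$, and set $\tilde L_t := X_{\tau_t}$. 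A change-of-time computation shows $f(\tilde L_t) - f(\tilde L_0) - \int_0^t Bf(\tilde L_s)\,ds$ is a martingale for all $f \in C_c^{\infty}(\mbb{R}^d)$, i.e.\ $(\tilde L_t)_{t \geq 0}$ solves the $(B,C_c^{\infty}(\mbb{R}^d))$-martingale problem. By well-posedness of the latter, the law of $(\tilde L_t)_{t \geq 0}$ is determined by the law of $\tilde L_0 = X_0 \sim \mu$; reversing the time change (which is itself a deterministic functional of the path of $\tilde L$, since $\tau_t$ is recovered from $\int_0^{\tau_t} 1/\varphi(\tilde L_s)\, ds = t$), the law of $(X_t)_{t \geq 0}$ is determined. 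This gives uniqueness in law, hence uniqueness in finite-dimensional distributions.

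The step I expect to be the main obstacle is the rigorous transfer of the martingale property through the random time change in \emph{both} directions at the level of martingale problems — in particular, justifying that a solution $(X_t)_{t \geq 0}$ of the $(A, C_c^\infty)$-martingale problem, which a priori need not be a Feller process or even a strong Markov process, gives rise after the inverse time change to a \emph{bona fide} solution of the $(B, C_c^\infty)$-martingale problem with respect to an appropriate filtration. This requires care with the non-explosion (so that $A_\infty = \infty$ and $\tau_t$ is finite for all $t$), with optional stopping for the localized martingales (stopping at exit times $\sigma_r$ from balls, as in the proof of Theorem~\ref{time-5}), and with the measurability of the time change. The remaining ingredients — the Lyapunov/conservativeness bound and the infiniteness of the perpetual integral — are already available from Corollary~\ref{cons-3}, Corollary~\ref{time-6} and the computations in the proof of Theorem~\ref{time-13}, so they can be quoted. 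Finally, the citation to Kurtz~\cite{kurtz} is not needed for Proposition~\ref{app-3} itself (it is used afterwards, in Theorem~\ref{app-1}, to pass from well-posedness of the martingale problem to weak uniqueness of the SDE).
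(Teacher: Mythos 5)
Your existence step (Corollary~\ref{time-15} plus Dynkin's formula) is exactly the paper's, but your uniqueness argument takes a genuinely different route. The paper localizes: it truncates $\varphi$ to bounded functions $\varphi_k = \chi_k\varphi$, notes that by Theorem~\ref{time-3} the bounded time change preserves the core, so $C_c^{\infty}(\mbb{R}^d)$ is a core for the generator of the Feller process with symbol $\varphi_k(x)\psi(\xi)$, deduces well-posedness of each localized martingale problem from the standard core criterion, and then glues these via Hoh's localization theorem \cite[Theorem 5.3]{hoh}, using that $p=p_k$ on $\{|x|<k\}$. You instead reduce an arbitrary solution $X$ of the $(A,C_c^{\infty}(\mbb{R}^d))$-martingale problem, by the inverse time change $\beta_t = A_t^{-1}$ with $A_t=\int_0^t\varphi(X_s)\,ds$, to a solution of the L\'evy martingale problem, whose well-posedness is classical, and then recover the law of $X$ as a pathwise functional of the L\'evy process (this is in the spirit of the random time change theory in \cite[Chapter 6]{ethier}). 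Your route avoids Hoh's theorem and is conceptually closer to the time-change theme of the paper, but it relocates the hard work into the step you yourself flag: transferring the martingale property through the time change for a process that is a priori neither Feller nor strong Markov, including the optional stopping argument (here the stopped martingales are in fact bounded, since $\int_0^{\beta_t}\varphi(X_s)\,ds=t$) and the non-degeneracy of the time change. The paper's route buys brevity by quoting two ready-made results; yours buys independence from the localization machinery at the cost of redoing, for martingale-problem solutions, essentially the analysis of Theorem~\ref{time-5}.

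Two caveats you should repair if you carry this out. First, you cannot simply ``quote'' Corollary~\ref{time-6} for $X$: that corollary is stated (and proved) for rich Feller processes, and $X$ is only a martingale-problem solution. Either re-derive the statement $\mbb{P}(A_\infty<\infty)=0$ via Lemma~\ref{time-7} applied to the time-changed process $\widetilde{L}$ (whose symbol $\psi$ trivially satisfies the growth condition, and whose localized Dynkin identity is exactly what your optional stopping step produces), or apply Corollary~\ref{time-6} only on the L\'evy side, with $f=1/\varphi$, to guarantee that the reconstruction functional $\widetilde{L}\mapsto X$ is defined almost surely under the L\'evy law; in either case one must check that assumptions (i)--(v) of Corollary~\ref{time-15} together with the quadratic growth of $\varphi$ imply the relevant growth condition \eqref{time-eq6}, which is the same computation the paper performs when verifying \eqref{time-eq5} in the proof of Theorem~\ref{time-13}. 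Second, Corollary~\ref{cons-3} is stated for initial distributions $\delta_x$, so for general $\mu$ you need the routine conditioning-on-$X_0$ argument; the Lyapunov bound $|\varphi\, Bu|\leq C'u$ is not needed for this conservativeness step. With these points attended to, your strategy does yield the proposition.
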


More generally, the proof of Proposition~\ref{app-3} shows that if $(X_t)_{t \geq 0}$ is a rich Feller process with symbol $q$ and $C_c^{\infty}(\mbb{R}^d)$ is a core for the generator of $(X_t)_{t \geq 0}$, then the $(A,C_c^{\infty}(\mbb{R}^d))$-martingale problem for the pseudo-differential operator $A$ with symbol $-p(x,\xi) = - \varphi(x) q(x,\xi)$ is well-posed for any function $\varphi$ satisfying the assumptions of Theorem~\ref{time-13}.

\begin{proof}[Proof of Proposition~\ref{app-3}]
	By Corollary~\ref{time-15} there exists a conservative rich Feller process $(X_t)_{t \geq 0}$ with symbol $p$. Since the generator of $(X_t)_{t \geq 0}$ is, when restricted to $C_c^{\infty}(\mbb{R}^d)$, the pseudo-differential operator $A$, it follows from Dynkin's formula that $(X_t)_{t \geq 0}$ is a solution to the $(A,C_c^{\infty}(\mbb{R}^d))$-martingale problem. It remains to show uniqueness. \par
	Pick $(\chi_k)_{k \in \mbb{N}} \subseteq C_c(\mbb{R}^d)$ such that $0 \leq \chi_k \leq 1$ and $\chi|_{B(0,k)}=1$. As $\varphi_k := \chi_k \varphi$ is bounded, there exists by Corollary~\ref{time-15} a rich Feller process $(X_t^{(k)})_{t \geq 0}$ with symbol $q_k(x,\xi) := \varphi_k(x) \psi(\xi)$. Using that $C_c^{\infty}(\mbb{R}^d)$ is a core for the generator of $(L_t)_{t \geq 0}$ and the boundedness of $\varphi_k$, we find that $C_c^{\infty}(\mbb{R}^d)$ is a core for the generator $A^{(k)}$ of $X^{(k)}$, cf.\ Theorem~\ref{time-3}. This, in turn, implies that the $(A^{(k)},C_c^{\infty}(\mbb{R}^d))$-martingale problem is well-posed, see e.\,g.\ \cite[Theorem 4.10.3]{kol} or \cite[Theorem 1.37]{diss}. Since $q(x,\xi) = q_k(x,\xi)$ for all $|x|<k$ and $\xi \in \mbb{R}^d$, it follows from \cite[Theorem 5.3]{hoh} that the $(A,C_c^{\infty}(\mbb{R}^d))$-martingale problem is well-posed.
\end{proof} 

\begin{proof}[Proof of Theorem~\ref{app-1}]
	If we set $\varphi(x) := |\sigma(x)|^{\alpha}$, then it follows from Proposition~\ref{app-3} and Corollary~\ref{time-17} that the $(A,C_c^{\infty}(\mbb{R}^d))$-martingale problem for the pseudo-differential operator $A$ with symbol $q(x,\xi) = |\sigma(x)|^{\alpha} \, |\xi|^{\alpha}$ is well-posed. By \cite{kurtz} this implies that there exists a unique weak solution to the SDE \eqref{app-eq5}.
\end{proof}

Let us formulate Proposition~\ref{app-3} for stable-dominated L\'evy processes.

\begin{kor} \label{app-4}
	Let $(L_t)_{t \geq 0}$ be a L\'evy process with L\'evy triplet $(b,Q,\nu)$ and characteristic exponent $\psi$. Suppose that $\nu|_{B(0,1)^c}$ is symmetric and that there exist constants $C>0$ and $\beta \in (0,2]$ such that \begin{equation}
		\nu(dy) \leq C |y|^{-d-\beta} \, dy \qquad \text{on $B(0,1)^c$}. \label{app-eq7}
	\end{equation}
	Let $\varphi: \mbb{R}^d \to (0,\infty)$ be a continuous mapping which grows at most quadratically. Assume that \begin{enumerate}
		\item $\varphi(x) \leq c(1+|x|)$ if $b \neq 0$,
		\item $\varphi(x) \leq c(1+|x|^{\beta})$ if $\nu \neq 0$.
	\end{enumerate}
	Then the $(A,C_c^{\infty}(\mbb{R}^d))$-martingale problem for the pseudo-differential operator $A$ with symbol $-p(x,\xi) := -\varphi(x) \psi(\xi)$ is well-posed. The unique solution is a conservative rich Feller process with symbol $p(x,\xi) = \varphi(x) \psi(\xi)$, $x,\xi \in \mbb{R}^d$.
\end{kor}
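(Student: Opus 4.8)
The plan is to obtain Corollary~\ref{app-4} directly from Proposition~\ref{app-3}. That proposition guarantees the well-posedness of the $(A,C_c^{\infty}(\mbb{R}^d))$-martingale problem with symbol $-p(x,\xi)=-\varphi(x)\psi(\xi)$, together with the Feller property and conservativeness of its unique solution, whenever the L\'evy triplet $(b,Q,\nu)$ and the function $\varphi$ satisfy hypotheses (i)--(v) of Corollary~\ref{time-15}. Hence it suffices to deduce those five conditions from the standing assumptions here: that $\varphi$ is continuous and grows at most quadratically, that $\nu|_{B(0,1)^c}$ is symmetric and stable-dominated in the sense of \eqref{app-eq7}, and that $\varphi(x)\leq c(1+|x|)$ if $b\neq 0$ and $\varphi(x)\leq c(1+|x|^{\beta})$ if $\nu\neq 0$. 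This is precisely the computation already carried out in the proof of Corollary~\ref{time-17}, so the argument is a repetition of it.

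Concretely, I would first extract from \eqref{app-eq7} the tail estimates used there: there is a constant $c>0$ with $\int_{0<|y|<r}|y|^2\,\nu(dy)\leq c\,r^{2-\beta}$ and $\nu(B(0,r)^c)\leq c\,r^{-\beta}$ for all $r\geq 1$, and $\nu(B(-x,r))\leq c\,|x|^{-\beta-1}$ for $|x|\gg 1$ (the last one because for large $|x|$ the ball $B(-x,r)$ lies in $B(0,1)^c$ at distance $\geq|x|/2$ from the origin and has volume of order $r^d$, and $d\geq 1$). Next, since $\nu|_{B(0,1)^c}$ is symmetric and the annulus $\{1<|y|<|x|/2\}$ is invariant under $y\mapsto -y$, the truncated drift $\int_{1<|y|<|x|/2}y\,\nu(dy)$ vanishes for every $x$, so hypothesis (ii) of Corollary~\ref{time-15} holds with constant $0$.

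The remaining verifications are a matter of matching exponents against $\max\{\varphi(x),1\}$. Hypothesis (i) of Corollary~\ref{time-15} is exactly assumption (i) here. For (iii) one splits $\int_{|y|\leq|x|/2}|y|^2\,\nu(dy)$ into the fixed finite part $\int_{|y|\leq 1}|y|^2\,\nu(dy)$ plus a term $\lesssim|x|^{2-\beta}$ and multiplies by $\max\{\varphi(x),1\}\lesssim 1+|x|^{\beta}$ (assumption (ii), used only when $\nu\neq 0$; when $\nu=0$ the integral is zero), while the at-most-quadratic growth of $\varphi$ absorbs the Gaussian contribution; (iv) is $\nu(B(0,|x|/2)^c)\lesssim|x|^{-\beta}$ times $\max\{\varphi(x),1\}\lesssim 1+|x|^{\beta}$; and (v) is $\nu(B(-x,r))\lesssim|x|^{-\beta-1}$ times the same factor, which tends to $0$ as $|x|\to\infty$. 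Once (i)--(v) of Corollary~\ref{time-15} are established, Proposition~\ref{app-3} yields the claim. There is no genuine obstacle in this argument; the only point to watch is the balance of exponents, namely that the growth exponent $\beta$ allowed for $\varphi$ in assumption (ii) is exactly the one compensating the $|x|^{-\beta}$-type tail decay furnished by \eqref{app-eq7}, so that none of the five conditions degenerates.
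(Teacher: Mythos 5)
Your argument is correct and is exactly the paper's (implicit) route: Corollary~\ref{app-4} is obtained by specializing Proposition~\ref{app-3}, verifying hypotheses (i)--(v) of Corollary~\ref{time-15} via the same tail estimates for stable-dominated L\'evy measures already used in the proof of Corollary~\ref{time-17}, with the symmetry of $\nu|_{B(0,1)^c}$ killing the truncated drift term. No gaps beyond the level of detail the paper itself leaves implicit.
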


Corollary~\ref{app-4} applies, for instance, if $(L_t)_{t \geq 0}$ is isotropic $\alpha$-stable, $\alpha \in (0,2]$, (choose $\beta=\alpha$) or relativistic stable (choose $\beta=2$), see the end of Section~\ref{time} for some more examples. Note that \eqref{app-eq7} implies, in particular, that $\nu|_{B(0,1)^c}$ has a density with respect to Lebesgue measure. The growth conditions on $\varphi$ are needed to ensure that the solution is conservative, see Lemma~\ref{time-8}. \par \medskip

In the second part of this section we are interested in existence results for Feller processes with decomposable symbols, i.\,e.\ symbols of the form \begin{equation*}
	p(x,\xi) = \sum_{j=1}^n \varphi_j(x) \psi_j(\xi)
\end{equation*}
for continuous $\varphi_j: \mbb{R}^d \to [0,\infty)$ and continuous negative definite mappings $\psi_j: \mbb{R}^d \to \mathbb{C}$. Hoh \cite{hoh94} proved the well-posedness of the associated martingale problem for real-valued $\psi_j$ satisfying the growth condition \begin{equation*}
		1+ \sum_{j=1}^n \psi_j(\xi) \geq c (1+|\xi|^{\alpha}), \qquad |\xi| \geq 1
	\end{equation*}
for some $c,\alpha>0$ under the assumption that $\varphi_j \in C^{2(m+1)+d}_b(\mbb{R}^d)$ for some $m>\tfrac{d}{2\alpha}$.  More recently, Feller processes with decomposable symbols were studied by Kolokoltsov \cite{kol04}; his main result requires that $\varphi_j \in C^{s}(\mbb{R}^d)$ for some $s>2+d/2$ and that $\varphi_j$ is bounded if the support of the L\'evy measure $\nu_j$ of $\psi_j$ is not bounded, i.\,e.\ $\nu_j(B(0,R)^c)>0$ for all $R>0$.  If we consider, for instance, isotropic stable processes, $\psi_j(\xi)= |\xi|^{\alpha_j}$, this means that both Hoh and Kolokoltsov assume boundedness of the coefficients $\psi_j$ as well as a certain regularity of $\varphi_j$. Combining the results from the previous section with a classical perturbation result, we will prove an existence result for unbounded continuous functions $\varphi_j$.  \par
Recall that an operator $(P,\mc{D}(P))$ is called \emph{relatively bounded with respect to an operator $(A,\mc{D}(A))$} if $\mc{D}(P) \supseteq \mc{D}(A)$ and there exist constants $c_1,c_2>0$ such that \begin{equation*}
	\|Pu\| \leq c_1 \|Au\| + c_2 \|u\| \fa u \in \mc{D}(A);
\end{equation*}
$c_1$ is called \emph{relative bound}. A classical perturbation result, see e.\,g.\ \cite[Corollary 3.8]{davies}, states that if $(A,\mc{D}(A))$ is a generator of a Feller semigroup and $(P,\mc{D}(P))$ is relatively bounded with respect to $A$ with relative bound $c_1<1$, then $P+A$ is the generator of a Feller semigroup.

\begin{thm} \label{app-5}
	Let $\psi_i \sim (b_i,Q_i,\nu_i)$, $i \in \{1,2\}$, be two continuous negative definite functions, and suppose that there exists a Bernstein function $f$ such that $\psi_2(\xi) = f(\psi_1(\xi))$, $\xi \in \mbb{R}^d$ and $\lim_{\lambda \to \infty} \lambda^{-1} f(\lambda)=0$. Let $\varphi_i: \mbb{R}^d \to (0,\infty)$, $i \in \{1,2\}$, be continuous functions growing at most quadratically. Suppose that $\varphi_i,\psi_i$ satisfy the growth conditions (i)-(v) in Corollary~\ref{time-15} for $i \in \{1,2\}$. If $\sup_{x \in \mbb{R}^d} \varphi_2(x)/\varphi_1(x) < \infty$ then there exists a conservative rich Feller process with symbol \begin{equation*}
		p(x,\xi) := \varphi_1(x) \psi_1(\xi) + \varphi_2(x) \psi_2(\xi), \qquad x,\xi \in \mbb{R}^d.
	\end{equation*}
\end{thm}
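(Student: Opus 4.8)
The plan is to proceed in two steps. First I would build a rich Feller process whose symbol is the decomposable symbol with bounded coefficients
\begin{equation*}
	q(x,\xi) := \psi_1(\xi) + \rho(x)\,\psi_2(\xi), \qquad \rho := \frac{\varphi_2}{\varphi_1},
\end{equation*}
and for which $C_c^{\infty}(\mbb{R}^d)$ is a core; then I would apply Theorem~\ref{time-13} with the weight $\varphi := \varphi_1$, which replaces $q$ by $\varphi_1(x)q(x,\xi) = \varphi_1(x)\psi_1(\xi) + \varphi_2(x)\psi_2(\xi) = p(x,\xi)$. Here $\rho$ is continuous and, by hypothesis, bounded; put $M := \|\rho\|_{\infty}<\infty$. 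Since $\psi_2 = f\circ\psi_1$, one has $q(x,\xi) = g_{\rho(x)}(\psi_1(\xi))$ with $g_c(\lambda) := \lambda + c f(\lambda)$, and as each $g_c$ ($c\geq 0$) is a Bernstein function, $q(x,\cdot)$ is a continuous negative definite function with $q(x,0)=0$ and $q(\cdot,\xi)$ continuous; moreover, since $\rho\leq M$ and $\psi_1,\psi_2$ have $x$-independent characteristics, $q$ has bounded coefficients, its characteristics being $(b_1+\rho(x)b_2,\ Q_1+\rho(x)Q_2,\ \nu_1+\rho(x)\nu_2)$.

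For the first step, let $\mc{A}_1 := -\psi_1(D)$ be the generator on $C_{\infty}(\mbb{R}^d)$ of the L\'evy process with exponent $\psi_1$ — so that $C_c^{\infty}(\mbb{R}^d)$ is a core for $\mc{A}_1$, cf.\ \cite{ltp} — and let $(S_s)_{s\geq0}$ denote its semigroup. I would perturb $\mc{A}_1$ by the pseudo-differential operator $P_0 u(x) := -\rho(x)\,\psi_2(D)u(x)$, the point being that $P_0$ is relatively bounded with respect to $\mc{A}_1$ with relative bound strictly below $1$. Indeed, the hypothesis $\lim_{\lambda\to\infty}\lambda^{-1}f(\lambda)=0$ forces the L\'evy--Khintchine representation of $f$ to have vanishing drift, $f(\lambda)=a+\int_{(0,\infty)}(1-e^{-\lambda s})\,\mu(ds)$, and Bochner subordination (verified on Fourier transforms) gives
\begin{equation*}
	\psi_2(D)u = a\,u + \int_{(0,\infty)} (u - S_s u)\,\mu(ds), \qquad u\in\mc{D}(\mc{A}_1).
\end{equation*}
Combining $\|u - S_s u\|_{\infty}\leq\min\{2\|u\|_{\infty},\ s\|\mc{A}_1 u\|_{\infty}\}$ with a splitting of the integral at a small $\delta>0$ (using $\int_{(0,\delta]}s\,\mu(ds)\to 0$ as $\delta\to0$ and $\mu((\delta,\infty))<\infty$) produces, for every $\eps>0$, a constant $C_{\eps}$ with $\|\psi_2(D)u\|_{\infty}\leq\eps\|\mc{A}_1 u\|_{\infty}+C_{\eps}\|u\|_{\infty}$; hence $\|P_0 u\|_{\infty}\leq M\eps\|\mc{A}_1 u\|_{\infty}+M C_{\eps}\|u\|_{\infty}$, and choosing $\eps<1/M$ gives relative bound $M\eps<1$. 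Since $(\mc{A}_1+P_0)|_{C_c^{\infty}(\mbb{R}^d)}$ is a pseudo-differential operator with negative definite symbol, hence dissipative, the classical perturbation result \cite[Corollary~3.8]{davies} shows that $(\mc{A}_1+P_0,\mc{D}(\mc{A}_1))$ generates a Feller semigroup; the corresponding process is a rich Feller process with symbol $q$, and because $C_c^{\infty}(\mbb{R}^d)$ is a core for $\mc{A}_1$ and $P_0$ is relatively $\mc{A}_1$-bounded, it is a core for $\mc{A}_1+P_0$ as well.

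For the second step I would apply Theorem~\ref{time-13} to this $q$-process with $\varphi:=\varphi_1$ and with the maps $R$ and $\chi$ chosen exactly as in the proof of Corollary~\ref{time-15}. The assumptions \ref{time-13-i}--\ref{time-13-v} of Theorem~\ref{time-13} for the pair $(\varphi_1,q)$ follow from the assumptions (i)--(v) of Corollary~\ref{time-15} that are posited for $(\varphi_1,\psi_1)$ and for $(\varphi_2,\psi_2)$: inserting the characteristics of $q$ listed above and using the elementary bound $\max\{\varphi_1(x),1\}\,\rho(x)\leq C\max\{\varphi_2(x),1\}$ (valid because $\rho\leq M$), each of \ref{time-13-i}--\ref{time-13-v} splits into the corresponding bound for $(\varphi_1,\psi_1)$ plus the one for $(\varphi_2,\psi_2)$, the at-most-quadratic growth of $\varphi_1,\varphi_2$ handling the $Q_i$-terms. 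Theorem~\ref{time-13} then yields a conservative rich Feller process with symbol $\varphi_1(x)q(x,\xi)=p(x,\xi)$, which is the assertion.

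I expect the relative-boundedness estimate to be the crux: one has to extract a relative bound \emph{strictly below $1$} from nothing more than the sublinearity of $f$ (a genuine linear term in $f$ would leave an irreducible relative contribution and break the perturbation argument), and the subordination identity is precisely what makes this possible. Routing through $q$ rather than perturbing the $\varphi_1\psi_1$-process of Corollary~\ref{time-15} directly is essential: the naive perturbation leaves a factor $\varphi_1(x)$ standing in front of $\psi_2(D)u$, and in the absence of any lower bound on $\varphi_1$ this cannot be absorbed pointwise into $\|\varphi_1\psi_1(D)u\|_{\infty}$; introducing $\varphi_1$ only \emph{after} the perturbation, via the time change in Theorem~\ref{time-13}, is what circumvents the difficulty. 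The remaining items — that $q(x,\cdot)$ is negative definite and that Corollary~\ref{time-15}'s conditions transfer to $(\varphi_1,q)$ — are routine verifications.
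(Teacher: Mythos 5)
Your proposal is correct and follows essentially the same route as the paper: construct a rich Feller process with the bounded-coefficient symbol $\psi_1(\xi)+\tfrac{\varphi_2(x)}{\varphi_1(x)}\psi_2(\xi)$ by perturbing the generator of the $\psi_1$-L\'evy process with the relatively bounded operator $-\tfrac{\varphi_2}{\varphi_1}\psi_2(D)$ (relative bound $<1$ coming from $\lambda^{-1}f(\lambda)\to 0$), note that $C_c^{\infty}(\mbb{R}^d)$ remains a core, and then multiply by $\varphi_1$ via Theorem~\ref{time-13}. The only difference is cosmetic: you prove the key estimate $\|\psi_2(D)u\|_{\infty}\leq \eps\|\mc{A}_1u\|_{\infty}+C_{\eps}\|u\|_{\infty}$ directly from Phillips' subordination formula, whereas the paper cites \cite[p.~208, (13.17)]{bernstein} and obtains the perturbing operator through the time change of Theorem~\ref{time-3}.
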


Let us remark that Theorem~\ref{app-5} can be formulated (and proved) in a similar fashion for finite sums $p(x,\xi) = \sum_{i=1}^n \varphi_i(x) \psi_i(\xi)$.

\begin{proof}
	Since $s := \varphi_2/\varphi_1$ is a bounded continuous function which is strictly positive, there exists a rich Feller process with symbol $q(x,\xi) := s(x) \psi_2(\xi)$ and generator $(A,\mc{D}(A))$, cf.\ Theorem~\ref{time-3}.  If we denote by $A_i$ the generator of the L\'evy process with characteristic exponent $\psi_i$, $i \in \{1,2\}$, then $Au = s \cdot A_2 u$ for $u \in \mc{D}(A_2)$. Moreover, $\psi_2 = f \circ \psi_1$ implies $\mc{D}(A_1) \subseteq \mc{D}(A_2)$, and as $\lim_{\lambda \to \infty} \lambda^{-1} f(\lambda)=0$ it follows from \cite[pp.~208, (13.17)]{bernstein} that for any $\delta>0$ there exists a constant $c>0$ such that \begin{equation*}
		\|A_2 u\|_{\infty} \leq \delta \|A_1 u\|_{\infty} + c \|u\|_{\infty}, \qquad u \in \mc{D}(A_1) \subseteq \mc{D}(A_2).
	\end{equation*}
	Hence, \begin{equation*}
		\|Au\|_{\infty}
		= \sup_{x \in \mbb{R}^d} |s(x) A_2 u(x)| 
		\leq \|s\|_{\infty} \delta \|A_1 u\|_{\infty} + \|s\|_{\infty} c \|u\|_{\infty}, \qquad u \in \mc{D}(A_1).
	\end{equation*}
	Choosing $\delta>0$ sufficiently small, we find that $A$ is relatively bounded with respect to $A_1$ with relative bound strictly smaller than $1$. By \cite[Corollary 3.8]{davies} , there exists a rich Feller process with generator $A+A_1$ and symbol \begin{equation*}
		q(x,\xi) + \psi_1(\xi) = \frac{\varphi_2(x)}{\varphi_1(x)} \psi_2(\xi) + \psi_1(\xi).
	\end{equation*}
	Since $C_c^{\infty}(\mbb{R}^d)$ is a core for $(A_1,\mc{D}(A_1))$, it follows easily from the above estimate for $\|Au\|$ that $C_c^{\infty}(\mbb{R}^d)$ is a core for $A+A_1$. Applying Theorem~\ref{time-9}, we find that there exists a conservative rich Feller process with symbol \begin{equation*}
		\varphi_1(x) (q(x,\xi) + \psi_1(\xi)) = \varphi_2(x) \psi_2(\xi) + \varphi_1(x) \psi_1(\xi) = p(x,\xi). \qedhere
	\end{equation*}
\end{proof}

For two given continuous negative definite functions $\psi_1, \psi_2$ it is, in general, hard to check whether there exists a Bernstein function $f$ such that $\psi_2 = f \circ \psi_1$. To our best knowledge, there is no general result which gives sufficient and/or necessary conditions for such a decomposition of $\psi_2$. We close this section with some examples.

\begin{bsp} \label{app-7}
	Let $\varphi_1,\varphi_2: \mbb{R}^d \to (0,\infty)$ be continuous functions such that $\sup_x \tfrac{\varphi_2(x)}{\varphi_1(x)}<\infty$. \begin{enumerate}
		\item isotropic $\alpha$-stable: Suppose that there exist $0<\alpha < \beta \leq 2$ such that $\varphi_1(x) \leq c_1 (1+|x|^{\beta})$ and $\varphi_2(x) \leq c_2(1+|x|^{\alpha})$ for absolute constants $c_1,c_2>0$. Then there exists a rich conservative Feller process with symbol \begin{equation*} 
			p(x,\xi) = \varphi_1(x) |\xi|^{\beta} + \varphi_2(x) |\xi|^{\alpha}, \qquad x,\xi \in \mbb{R}^d.
		\end{equation*}
		\item relativistic stable: If $\varphi_i(x) \leq c(1+|x|^2)$, $i \in \{1,2\}$, then there exists a conservative rich Feller process with symbol \begin{equation*}
			p(x,\xi) = \varphi_1(x) \big((|\xi|^2+m^2)^{\beta/2}-m^{\beta} \big) + \varphi_2(x) \big((|\xi|^2+m^2)^{\alpha/2}-m^{\alpha} \big), \quad x,\xi \in \mbb{R}^d
		\end{equation*}
		for any $\alpha,\beta \in (0,2)$, $\alpha \leq \beta$.
		\item homographic: If $\varphi_i(x) \leq c(1+|x|^2)$, $i \in \{1,2\}$, then there exists a conservative rich Feller process with symbol \begin{equation*}
			p(x,\xi) = \varphi_1(x) \frac{\varrho |\xi|^2}{1+\varrho |\xi|^2} + \varphi_2(x) \frac{\lambda |\xi|^2}{1+\lambda |\xi|^2}, \qquad x,\xi \in \mbb{R}^d,
		\end{equation*}
		for any $\lambda,\varrho \in (0,\infty)$, $\varrho \leq \lambda$.
	\end{enumerate}
\end{bsp}

\appendix
\section{Appendix} \label{appen}

Let $(q(x,\cdot))_{x \in \mbb{R}^d}$ be a family of continuous negative definite functions $q(x,\cdot): \mbb{R}^d \to \mbb{C}$ with characteristics $(b,Q,\nu)$, i.\,e.\
 \begin{equation*}
	q(x,\xi) = q(x,0) -ib(x) \cdot \xi + \frac{1}{2} \xi \cdot Q(x) \xi + \int_{y \neq 0} (1-e^{iy \cdot \xi}+i y \cdot \xi \I_{(0,1)}(|y|)) \, \nu(x,dy), \quad x,\xi \in \mbb{R}^d.
\end{equation*}
In this section we establish an equivalent characterization of continuity of the mapping $x \mapsto q(x,\xi)$ in terms of the characteristics $(b,Q,\nu)$. To this end, it is useful to replace $\I_{(0,1)}$ by a smooth cut-off function $\chi$ satisfying $\I_{B(0,1)} \leq \chi \leq \I_{B(0,2)}$: \begin{equation*}
	q(x,\xi) = q(x,0) -i\tilde{b}(x) \cdot \xi + \frac{1}{2} \xi \cdot Q(x) \xi + \int_{y \neq 0} (1-e^{iy \cdot \xi}+i y \cdot \xi\chi(y)) \, \nu(x,dy), \quad x,\xi \in \mbb{R}^d;
\end{equation*}
here $\tilde{b}(x) := b(x) - \int_{y \neq 0} (\I_{(0,1)}(|y|)-\chi(y)) \, \nu(x,dy)$ is the compensated drift.

\begin{thm} \label{appen-1}
	Let $(q(x,\cdot))_{x \in \mbb{R}^d}$ be a family of continuous negative definite functions of the form \begin{equation}
		q(x,\xi) = q(x,0) -i b(x) \cdot \xi + \int_{y \neq 0} (1-e^{iy \cdot \xi}+iy \cdot \xi \chi(y)) \, \nu(x,dy) \label{appen-eq3}
	\end{equation}
	for a cut-off function $\chi \in C_c^{\infty}(\mbb{R}^d)$, $\I_{B(0,1)} \leq \chi \leq \I_{B(0,2)}$. If $q$ is locally bounded, then the following statements are equivalent. \begin{enumerate}
		\item $x \mapsto q(x,\xi)-q(x,0)$ is continuous for all $\xi \in \mbb{R}^d$.
		\item Each of the following conditions is satisfied. \begin{enumerate}
			\item $x \mapsto q(x,\xi)-q(x,0)$ is continuous for all $\xi \in \mbb{R}^d$.
			\item $\lim_{|\xi| \to 0} \sup_{x \in K} |q(x,\xi)-q(x,0)|=0$ for any compact set $K \subseteq \mbb{R}^d$.
		\end{enumerate}
		\item Each of the following conditions is satisfied. \begin{enumerate}
		\item $x \mapsto b(x)$ is continuous.
		\item $x \mapsto \nu(x,\cdot)$ is vaguely continuous on $(\mbb{R}^d \backslash \{0\},\mc{B}(\mbb{R}^d \backslash \{0\}))$, i.\,e.\ \begin{equation*}
			\int_{y \neq 0} f(y) \, \nu(z,dy) \xrightarrow[]{z \to x} \int_{y \neq 0} f(y) \, \nu(x,dy) \fa f \in C_c(\mbb{R}^d \backslash \{0\}), x \in \mbb{R}^d.
		\end{equation*}
		\item $\lim_{R \to \infty} \sup_{x \in K} \nu(x, B(0,R)^c) = 0$ for any compact set $K \subseteq \mbb{R}^d$ and $R>0$.
		\item $\lim_{r \to 0} \sup_{x \in K} \int_{|y| \leq r} |y|^2 \, \nu(x,dy)=0$ for any compact set $K \subseteq \mbb{R}^d$ and $r>0$.
	\end{enumerate}
	\item The pseudo-differential operator with symbol $-(q(x,\xi)-q(x,0))$ maps $C_c^{\infty}(\mbb{R}^d)$ into $C(\mbb{R}^d)$.  \end{enumerate}
\end{thm}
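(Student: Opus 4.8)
The plan is to establish the cycle $(iii)\Rightarrow(ii)\Rightarrow(i)\Rightarrow(iii)$ (which makes $(i),(ii),(iii)$ equivalent, $(ii)\Rightarrow(i)$ being trivial since $(i)$ is literally $(ii)(a)$) together with $(iii)\Rightarrow(iv)\Rightarrow(i)$. It is convenient to write $g(x,\xi):=q(x,\xi)-q(x,0)$, so that $g(x,\cdot)$ is a continuous negative definite function vanishing at the origin with characteristics $(b(x),0,\nu(x,dy))$, and to denote by $A_0$ the pseudo-differential operator with symbol $-g$, i.e.\ $A_0u(x)=b(x)\cdot\nabla u(x)+\int_{y\neq 0}(u(x+y)-u(x)-y\cdot\nabla u(x)\chi(y))\,\nu(x,dy)$ for $u\in C_c^{\infty}(\mbb{R}^d)$. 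Two standard facts are used repeatedly: the elementary estimates \eqref{time-eq17}, and the growth bound $|g(x,\xi)|\leq c_d(1+|\xi|^2)\sup_{|\eta|\leq 1}|g(x,\eta)|$, cf.\ \cite[Proposition 2.17d)]{ltp}. Joint local boundedness of $q$ therefore gives $\sup_{x\in K}\sup_{|\eta|\leq 1}|g(x,\eta)|<\infty$ for every compact $K$, and hence, via the elementary lower bound $\int_{y\neq 0}(1\wedge|y|^2)\,\nu(x,dy)\leq c_d\int_{[-1,1]^d}\re g(x,\xi)\,d\xi$, also $\sup_{x\in K}\int_{y\neq 0}(1\wedge|y|^2)\,\nu(x,dy)<\infty$.

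\textbf{The implications $(iii)\Rightarrow(ii)$ and $(iii)\Rightarrow(iv)$.} Both rest on the three-region decomposition, for $0<r\leq 1<R$,
\[
	g(x,\xi)=-ib(x)\cdot\xi+\Bigl(\int_{0<|y|\leq r}+\int_{r<|y|\leq R}+\int_{|y|>R}\Bigr)(1-e^{iy\cdot\xi}+iy\cdot\xi\,\chi(y))\,\nu(x,dy),
\]
where by \eqref{time-eq17} the first integral is bounded by $\tfrac12|\xi|^2\int_{|y|\leq r}|y|^2\,\nu(x,dy)$ and the third by $2\nu(x,B(0,R)^c)$. Given a compact $K$ and $\varepsilon>0$, conditions (iii)(c) and (iii)(d) let me choose first $R$ and then $r$ so that these contributions are $<\varepsilon$ uniformly on $K$; continuity of $b$ (iii)(a) and vague continuity of $\nu$ (iii)(b) then handle the drift term and the integral over the fixed annulus (using that, for functions that are $O(|y|^2)$ near $0$ and bounded, vague convergence together with the uniform smallness of the small- and large-jump parts yields convergence of the integral), giving (ii)(a); afterwards letting $|\xi|\to 0$ with $R$ fixed, and using that $|b|$ is bounded on $K$, gives (ii)(b). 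For (iv) one applies the same decomposition to $A_0u$, $u\in C_c^{\infty}(\mbb{R}^d)$: the near-diagonal part is dominated by $\tfrac12\|D^2u\|_{\infty}\int_{|y|\leq r}|y|^2\,\nu(x,dy)$ and the far part by $2\|u\|_{\infty}\nu(x,B(0,R)^c)$, both uniformly small on $K$, while the annulus part is continuous by (iii)(a),(b) and the same generalised dominated convergence argument; hence $A_0u\in C(\mbb{R}^d)$.

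\textbf{The implication $(iv)\Rightarrow(i)$.} The idea is to recover $g(\cdot,\xi_0)$ for fixed $\xi_0\neq 0$ (the case $\xi_0=0$ is trivial) by testing $A_0$ against an approximate identity in Fourier space. Fix $\phi\in C_c^{\infty}(\mbb{R}^d)$ with $\phi(0)\neq 0$ and put $u_k(y):=e^{iy\cdot\xi_0}\phi(y/k)\in C_c^{\infty}(\mbb{R}^d)$ (splitting into real and imaginary parts if $C_c^{\infty}$ is taken real-valued). Then $\hat u_k(\xi)=k^d\hat\phi(k(\xi-\xi_0))$, and the substitution $\xi=\xi_0+\eta/k$ in $A_0u_k(x)=-\int e^{ix\cdot\xi}g(x,\xi)\hat u_k(\xi)\,d\xi$ together with dominated convergence—the integrand being bounded by $c_d(1+|\xi_0|^2+|\eta|^2)\bigl(\sup_{x\in K,\,|\zeta|\leq 1}|g(x,\zeta)|\bigr)|\hat\phi(\eta)|$, which is integrable and uniform over $x$ in a compact $K$, while $\xi\mapsto g(x,\xi)$ is continuous and the family $(g(x,\cdot))_{x\in K}$ is equicontinuous on compacts (a consequence of the uniform bound $\sup_{x\in K}|g(x,\xi)|\leq c_K(1+|\xi|^2)$ and negative definiteness)—shows that $A_0u_k\to -\phi(0)\,e^{i(\cdot)\cdot\xi_0}g(\cdot,\xi_0)$ locally uniformly. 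Since each $A_0u_k$ is continuous by (iv), so is $x\mapsto e^{ix\cdot\xi_0}g(x,\xi_0)$, hence $x\mapsto g(x,\xi_0)$; as $\xi_0$ was arbitrary, this is (i).

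\textbf{The implication $(i)\Rightarrow(iii)$ — the main obstacle.} Here one passes to sequences. If $x_n\to x_0$, then by (i) the negative definite functions $g(x_n,\cdot)$ converge pointwise to $g(x_0,\cdot)$, so the infinitely divisible laws with characteristic functions $e^{-g(x_n,\cdot)}$ converge weakly to the law with characteristic function $e^{-g(x_0,\cdot)}$, and the continuity theorem for infinitely divisible distributions, cf.\ \cite[Theorem 8.7]{sato}, yields $b(x_n)\to b(x_0)$ (giving (iii)(a)), vague convergence $\nu(x_n,\cdot)\to\nu(x_0,\cdot)$ on $\mbb{R}^d\setminus\{0\}$ (giving (iii)(b)), and $\lim_{R\to\infty}\sup_n\nu(x_n,B(0,R)^c)=0$. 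The crucial point—and precisely the reason why the structural assumption \eqref{appen-eq3} (absence of a Gaussian part) is needed and why mere pointwise continuity of $q(\cdot,\xi)$ would not suffice—is that the limiting triplet has $Q=0$; by Sato's theorem this forces $\lim_{r\to 0}\sup_n\int_{|y|\leq r}|y|^2\,\nu(x_n,dy)=0$, i.e.\ no Gaussian component can be produced in the limit by an accumulation of small jumps. Finally, the uniform-over-compact statements (iii)(c),(d) follow from these sequential ones by a routine contradiction argument: were (iii)(d) false, there would be $r_n\downarrow 0$ and $x_n\in K$ with $\int_{|y|\leq r_n}|y|^2\,\nu(x_n,dy)\geq\delta$, and extracting a convergent subsequence $x_n\to x_0\in K$ contradicts the sequential bound just obtained; the argument for (iii)(c) is analogous. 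The delicate ingredient throughout is thus the appeal to the convergence theorem for Lévy triplets in the form where the Gaussian part of the limit cannot be separated from small jumps, and the careful use of the hypothesis that every $q(x,\cdot)$ is of the form \eqref{appen-eq3}.
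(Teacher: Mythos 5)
Most of your architecture is sound, and your route for (i) $\Rightarrow$ (iii) is genuinely different from the paper's: you pass through Lévy's continuity theorem and the convergence theorem for infinitely divisible laws (Sato, Thm.\ 8.7) to get sequential convergence of the triplets $(b(x_n),0,\nu(x_n,\cdot))$, including the crucial "no Gaussian part in the limit" control of small jumps, and then upgrade to uniformity on compacts by a subsequence/contradiction argument. The paper instead proves (ii) $\Rightarrow$ (iii) operator-theoretically, applying the pseudo-differential operator to the weighted test functions $|\cdot-x|^2\varphi(\cdot-x)$ and $|\cdot-x|^2\varphi(\cdot)$ and using Dini's theorem (leaning on Schilling's Positivity paper for (iii)(c)); your probabilistic shortcut is legitimate and even bypasses (ii)(b), modulo the routine bookkeeping of converting Sato's indicator truncation to the smooth cut-off $\chi$ (use the continuous-truncation form of the convergence theorem to avoid trouble when $\nu(x_0,\{|y|=1\})>0$). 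Your (iii) $\Rightarrow$ (ii) and (iii) $\Rightarrow$ (iv) decompositions match the paper's (iii) $\Rightarrow$ (i) argument in spirit and are fine (with the same glossed-over boundary-mass issue in the vague-convergence step that the paper also ignores).

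The genuine gap is in (iv) $\Rightarrow$ (i), which the paper does not prove but simply quotes from \cite[Theorem 4.4]{rs98}. Your argument needs $A_0u_k\to-\phi(0)e^{i(\cdot)\cdot\xi_0}g(\cdot,\xi_0)$ \emph{locally uniformly} (pointwise limits of continuous functions need not be continuous), and you justify the uniformity by asserting that $(g(x,\cdot))_{x\in K}$ is equicontinuous on compacts "as a consequence of the uniform bound $\sup_{x\in K}|g(x,\xi)|\le c_K(1+|\xi|^2)$ and negative definiteness". That implication is false: take $d=1$, $\nu(x_n,dy)=\delta_n(dy)$ for a sequence $x_n\to x_0$ in $K$, so $g(x_n,\xi)=1-e^{in\xi}$; the family is uniformly bounded by $2$ and each member is a continuous negative definite function, yet it is equicontinuous at no point. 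If you track where uniformity is really needed, the obstruction is the big-jump term $\int_{|y|>M}\min\{2,|y|\,|\eta|/k\}\,\nu(x,dy)$: making it small uniformly on $K$ requires $\sup_{x\in K}\nu(x,B(0,M)^c)\to0$ as $M\to\infty$, i.e.\ essentially condition (iii)(c) (equivalently (ii)(b)) — which is part of what the theorem asserts and is not available in this direction, so the argument is circular as written. To repair it, either cite \cite[Theorem 4.4]{rs98} as the paper does, or argue via joint continuity: the local bound $|A_0u(z)|\le c_K\int(1+|\xi|^2)|\hat u(\xi)|\,d\xi$, $z\in K$, makes $x\mapsto A_0 u_x$ continuous into $C(K)$ for families $x\mapsto u_x$ of test functions moving continuously in the norm $\|(1+|\cdot|^2)\hat u\|_{L^1}$, and one can then extract the characteristics directly from (iv) with the paper's test functions $S_x,T_x$ and conclude (iii) $\Rightarrow$ (i).
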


\begin{bem} \label{appen-2} \begin{itemize}
	\item (i) $\iff$ (ii) $\iff$ (iv) $\implies$ (iii) remain valid for any Borel measurable cut-off function $\chi$ such that $\I_{B(0,1)} \leq \chi \leq \I_{B(0,2)}$.
	\item Continuity of $x \mapsto q(x,\xi)-q(x,0)$ does \emph{not} imply that the diffusion coefficient $Q(\cdot)$ is continuous; consider for instance \begin{equation*}
		q(x,\xi) := \frac{1}{2} \xi^2 \I_{\{0\}}(x) + \frac{1-\cos(x \xi)}{x^2} \I_{\mbb{R} \backslash \{0\}}(x), \qquad x,\xi \in \mbb{R}.
	\end{equation*}
	However, the proof of Theorem~\ref{appen-1} shows that the implications (i) $\iff$ (ii) $\iff$ (iv) $\implies$ (iii)(a)-(c) remain valid if $q$ has a non-vanishing diffusion part $Q$.
	\item Denote by $A$ the pseudo-differential operator with symbol $-q$. Continuity of the symbol with respect to $x$ is a sufficient but not necessary condition for the continuity of $x \mapsto Af(x)$, $f \in C_c^{\infty}(\mbb{R}^d)$, see \cite[Example 4.6]{rs98} for a (counter)example. Schilling \cite[Theorem 4.4]{rs98} has shown that if $A(C_c^{\infty}(\mbb{R}^d)) \subseteq C(\mbb{R}^d)$ then \begin{equation}
		\forall \xi \in \mbb{R}^d: x \mapsto q(x,\xi) \, \, \text{continuous} \iff x \mapsto q(x,0) \, \, \text{is continuous}. \label{appen-eq4}
	\end{equation}
	\item A symbol $q$ of the form \eqref{appen-eq3} is locally bounded if, and only if, for any compact set $K \subseteq \mbb{R}^d$ there exists a constant $c>0$ such that $|q(x,\xi)| \leq c(1+|\xi|^2)$ for all $x \in K$, $\xi \in \mbb{R}^d$. By \cite[Lemma 2.1, Remark 2.2]{rs-grow}, this is equivalent to \begin{equation}
		\forall K \subseteq \mbb{R}^d \, \, \text{cpt.}: \, \, \, \sup_{x \in K} |q(x,0)| +\sup_{x \in K} |b(x)| + \sup_{x \in K} \int_{y \neq 0} (|y|^2 \wedge 1) \, \nu(x,dy) <\infty.  \label{appen-eq5}
	\end{equation}
	Local boundedness of $q$ is, in particular, satisfied if there exists a rich Feller process with symbol $q$, cf.\ \cite[Proposition 2.27(d)]{ltp} and Corollary~\ref{appen-25}.
	\end{itemize}
\end{bem}

\begin{proof}[Proof of Theorem~\ref{appen-1}]
	To keep notation simple, we prove the result only in dimension $d=1$. Without loss of generality we may assume that $q(x,0)=0$ (otherwise replace $q(x,\xi)$ by $q(x,\xi)-q(x,0)$). The implications (i) $\implies$ (ii) and (iv) $\implies$ (i) follow from \cite[Theorem 4.4]{rs98}. Moreover, if $x \mapsto q(x,\xi)$ is continuous, then we find from the local boundedness of $q$ and the dominated convergence theorem that \begin{equation*}
		  x \mapsto - \int_{\mbb{R}} q(x,\xi) \hat{f}(\xi) e^{ix \xi} \, d\xi
		\end{equation*}
		is continuous for all $f \in C_c^{\infty}(\mbb{R})$, and this proves (iv) $\implies$ (i). In the remaining part of the proof we show that (ii) $\implies$ (iii) $\implies$ (i). \par \medskip
	
	(iii) $\implies$ (i): It suffices to show that \begin{equation*}
		x \mapsto p(x,\xi): = \int_{y \neq 0} (1-e^{iy \xi} + i y \xi \chi(y)) \, \nu(x,dy)
	\end{equation*}
	is continuous for all $\xi \in \mbb{R}$. Clearly, \begin{equation*}
		|p(x,\xi)-p(z,\xi)| \leq I_1 + I_2+I_3
	\end{equation*}
	where \begin{align*}
		I_1 &:= \frac{|\xi|^2}{2} \left( \int_{|y| \leq r} |y|^2 \, \nu(x,dy) + \int_{|y| \leq r} |y|^2 \, \nu(z,dy) \right) \\
		I_2 &:= \left| \int_{r<|y|<R} (1-e^{iy \xi}+iy \xi \chi(y)) \, \nu(x,dy) - \int_{r<|y|<R} (1-e^{iy \xi}+iy \xi \chi(y)) \, \nu(z,dy) \right| \\
		I_3 &:= \nu(x,B(0,R)^c) + \nu(z,B(0,R)^c).
	\end{align*}
	The vague continuity implies that $I_2 \to 0$ as $z \to x$ for fixed $r,R>0$. Letting first $z \to x$ and then $r \to 0$ and $R \to \infty$, it follows from (iii)(c) and (iii)(d) that $p(\cdot,\xi)$ is continuous. \par \medskip
	
	(ii) $\implies$ (iii): Denote by $A$ the pseudo-differential operator with symbol $-q$. Exactly the same reasoning as in \cite[proof of Theorem 4.4]{rs98} shows that (iii)(c) holds. For $\varphi \in C_c^{\infty}(\mbb{R})$ and $x \in \mbb{R}$ define \begin{equation*}
		S_x(\varphi) := A(|\cdot-x|^2 \varphi(\cdot-x))(x) = \int_{y \neq 0} y^2 \varphi(y) \, \nu(x,dy).
	\end{equation*}
	If we denote by $\mc{F}f := \hat{f}$ the Fourier transform of a function $f$, then \begin{equation*}
		\mc{F}(|\cdot-x|^2 \varphi(\cdot-x))(\xi) = e^{-ix \xi} \mc{F}(|\cdot|^2 \varphi(\cdot))(\xi), \qquad \xi \in \mbb{R}.
	\end{equation*}
	Since $q$ is locally bounded and $x \mapsto q(x,\xi)$ is continuous for all $\xi \in \mbb{R}$, an application of the dominated convergence theorem shows that \begin{equation*}
		x \mapsto S_x(\varphi) = - \int_{\mbb{R}} q(x,\xi) \mc{F}(|\cdot-x|^2 \varphi(\cdot-x))(\xi) e^{ix \xi} \, d\xi
	\end{equation*}
	is continuous. Choose $\varphi_k \in C_c^{\infty}(\mbb{R})$ such that $\I_{B(0,1/k)} \leq \varphi_k \leq \I_{B(0,2/k)}$ and $\varphi_{k+1} \leq \varphi_k$. Then \begin{equation*}
		S_x(\varphi_k) \leq \int_{|y| \leq 2/k} |y|^2 \, \nu(x,dy) \xrightarrow[]{k \to \infty} 0 \fa x \in \mbb{R}
	\end{equation*}
	and $S_x(\varphi_k) \geq S_x(\varphi_{k+1})$. Applying Dini's theorem, we find that \begin{equation*}
		\sup_{x \in K} \int_{|y| \leq 1/k} |y|^2 \, \nu(x,dy) \leq \sup_{x \in K} \int y^2 \varphi_k(y) \, \nu(x,dy) = \sup_{x \in K} |S_x(\varphi_k)|\xrightarrow[\text{Dini}]{k \to \infty} 0
	\end{equation*}
	for any compact set $K$, and this proves (iii)(d).  If we set $\mu(x,dy) := |y-x|^2 \, \nu(x,dy+x)$, then \begin{equation}
		T_x(\varphi) := A(|\cdot-x|^2 \varphi(\cdot))(x) =\int_{y \neq 0} |y|^2 \varphi(x+y) \, \nu(x,dy) = \int \varphi(y) \, \mu(x,dy) \label{appen-eq6}
	\end{equation}
	for all $\varphi \in C_c^{\infty}(\mbb{R})$. Since \begin{align*}
		\mc{F}(|\cdot-x|^2 \varphi(\cdot))(\xi) &= \frac{1}{2\pi} \int_{\mbb{R}} (y-x)^2 \varphi(y) e^{-iy \xi} \, dy \\ &=  \frac{1}{2\pi} \left( \int_{\mbb{R}} y^2 \varphi(y) \,e^{-iy \xi} \, dy -2 x \int_{\mbb{R}} y \varphi(y) e^{-iy \xi} \, dy + x^2 \int_{\mbb{R}} \varphi(y) e^{-iy \xi} \, dy \right)
	\end{align*}
	there exists for any compact set $K$ an integrable function $g$ such that $\sup_{x \in K} |\mc{F}(|\cdot-x|^2 \varphi)(\xi)| \leq g(\xi)$ for all $\xi \in \mbb{R}$. As $x \mapsto q(x,\xi)$ is continuous and locally bounded, the dominated convergence theorem shows that the mapping \begin{equation*}
		x \mapsto T_x(\varphi) = - \int_{\mbb{R}} q(x,\xi) \mc{F}(|\cdot-x|^2 \varphi(\cdot))(\xi) e^{ix \xi} \, d\xi
	\end{equation*} 
	is continuous for all $\varphi \in C_c^{\infty}(\mbb{R})$. By \eqref{appen-eq6} this means that \begin{equation*}
		\int_{\mbb{R}} \varphi(y) \, \mu(z,dy) \xrightarrow[]{z \to x} \int_{\mbb{R}} \varphi(y) \, \mu(x,dy) \fa f \in C_c^{\infty}(\mbb{R}), x \in \mbb{R}.
	\end{equation*}
	Combining this with the fact that the local boundedness of $q$ implies \begin{equation}
		\sup_{z \in K} \int_{|y|\leq R} \, \mu(z,dy)<\infty \fa R>0, K \subseteq \mbb{R} \,\, \text{cpt.} \label{appen-eq7}
	\end{equation}
	cf.\ \eqref{appen-eq5}, we conclude that $\mu(\cdot,dy)$ is vaguely continuous on $(\mbb{R},\mc{B}(\mbb{R}))$.  Using that \begin{align*}
		\left| \int_{\mbb{R}} \varphi(y-x) \, \mu(x,dy) - \int_{\mbb{R}} \varphi(y-z) \, \mu(z,dy) \right|
		&\leq \int_{\mbb{R}} |\varphi(y-x)-\varphi(y-z)| \, \mu(z,dy) \\
		&\quad + \left| \int_{\mbb{R}} \varphi(y-x) \, \mu(z,dy)- \int_{\mbb{R}} \varphi(y-x) \, \mu(x,dy) \right|
	\end{align*}
	for all $\varphi \in C_c(\mbb{R})$, it follows from \eqref{appen-eq7} and the vague continuity of $\mu(x,\cdot)$ that \begin{align*}
		\int_{\mbb{R}} \varphi(y-z) \, \mu(z,dy) \xrightarrow[]{z \to x} \int_{\mbb{R}} \varphi(y-x) \, \mu(x,dy)
	\end{align*}
	for all $\varphi \in C_c(\mbb{R})$. Since $\nu(x,dy) = \frac{1}{|y|^2} \mu(x,dy-x)$, it is not difficult to see that this implies that $\nu(z,\cdot)$ converges vaguely on $(\mbb{R} \backslash \{0\},\mc{B}(\mbb{R} \backslash \{0\}))$ to $\nu(x,\cdot)$ as $z \to x$. To prove (iii)(a) we note that \begin{align*}
		b(x) = A((\cdot-x) \chi(\cdot-x) )(x) 
		&= - \int_{\mbb{R}} q(x,\xi) e^{ix \xi} \mc{F}(((\cdot-x) \chi(\cdot-x))(\xi) \, d\xi \\
		&= - \int_{\mbb{R}} q(x,\xi) \mc{F}(\id(\cdot) \chi(\cdot))(\xi) \, d\xi;
	\end{align*}
	here $\id(y) := y$. Applying the dominated convergence theorem another time, we find that $x \mapsto b(x)$ is continuous. 
\end{proof}

As a direct consequence of Theorem~\ref{appen-1} we obtain the following corollary.

\begin{kor} \label{appen-25}
	Let $(X_t)_{t \geq 0}$ be a rich Feller process with symbol $q$ and characteristics $(b,Q,\nu)$.  If $x \mapsto q(x,0)$ is continuous, then $q \mapsto q(x,\xi)$ is continuous for all $\xi \in \mbb{R}^d$ and (iii)(a)-(c) in Theorem~\ref{appen-1} are satisfied. If additionally $Q=0$, then (iii)(d) holds true.
\end{kor}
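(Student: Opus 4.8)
The plan is to obtain the corollary as a direct application of Schilling's result \eqref{appen-eq4} together with Theorem~\ref{appen-1} and the extensions of its implications recorded in Remark~\ref{appen-2}. Two preliminary observations are needed. First, since there exists a rich Feller process with symbol $q$, the symbol is locally bounded by \cite[Proposition 2.27(d)]{ltp}, which is the standing hypothesis of Theorem~\ref{appen-1}. Second, writing $A$ for the generator of $(X_t)_{t \geq 0}$, we have $C_c^{\infty}(\mbb{R}^d) \subseteq \mc{D}(A)$ and $Af(x) = -\int_{\mbb{R}^d} e^{ix \cdot \xi} q(x,\xi) \hat{f}(\xi) \, d\xi$ for $f \in C_c^{\infty}(\mbb{R}^d)$ by von Waldenfels--Courr\`ege; in particular $A(C_c^{\infty}(\mbb{R}^d)) \subseteq C_{\infty}(\mbb{R}^d) \subseteq C(\mbb{R}^d)$.

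Given these, I would argue as follows. Because $A(C_c^{\infty}(\mbb{R}^d)) \subseteq C(\mbb{R}^d)$ and $x \mapsto q(x,0)$ is continuous by assumption, \eqref{appen-eq4} (i.e.\ \cite[Theorem 4.4]{rs98}) yields that $x \mapsto q(x,\xi)$ is continuous for every $\xi \in \mbb{R}^d$; in particular, condition (i) of Theorem~\ref{appen-1} is satisfied for the truncated symbol $q(\cdot,\xi) - q(\cdot,0)$. By the implication (i) $\Rightarrow$ (iii)(a)--(c) of Theorem~\ref{appen-1} --- which, as recorded in Remark~\ref{appen-2}, remains valid when $q$ has a non-vanishing diffusion coefficient $Q$ and for the Borel cut-off $\chi = \I_{(0,1)}(|\cdot|)$ occurring in the L\'evy--Khintchine form of the Feller characteristics --- the conditions (iii)(a), (iii)(b), (iii)(c) follow. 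This proves the first assertion.

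For the second assertion suppose $Q \equiv 0$. After replacing the truncation $\I_{(0,1)}(|y|)$ in the representation of $q$ by a smooth cut-off $\chi$ with $\I_{B(0,1)} \leq \chi \leq \I_{B(0,2)}$ (absorbing the difference into a compensated drift $\tilde{b}(x)$, which does not affect continuity in $x$, cf.\ Remark~\ref{appen-2}), the symbol $q$ is exactly of the form \eqref{appen-eq3} with $q$ locally bounded, so Theorem~\ref{appen-1} applies verbatim; since (i) holds, so does (iii), and in particular (iii)(d).

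I do not expect a genuine difficulty here --- the corollary is essentially a repackaging of Theorem~\ref{appen-1}. The only point requiring care is that Theorem~\ref{appen-1} is phrased for symbols without a diffusion term, so the first assertion must be read off from the extended implications in Remark~\ref{appen-2} rather than from the theorem verbatim, whereas only the case $Q \equiv 0$ invokes the theorem in its original form.
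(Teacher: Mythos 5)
Your proposal is correct and follows essentially the same route as the paper: local boundedness via \cite[Proposition 2.27(d)]{ltp}, the inclusion $A(C_c^{\infty}(\mbb{R}^d)) \subseteq C(\mbb{R}^d)$ for the generator restricted to $C_c^{\infty}(\mbb{R}^d)$, Schilling's equivalence \eqref{appen-eq4} to obtain continuity of $x \mapsto q(x,\xi)$, and then Theorem~\ref{appen-1} together with Remark~\ref{appen-2}. You merely spell out which implications of the theorem and which extensions in the remark are invoked, which the paper leaves implicit.
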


\begin{proof}
	By  \cite[Proposition 2.27(d)]{ltp} the symbol $q$ is locally bounded. Denote by $A$ the pseudo-differential operator with symbol $-q$. Since $A$ is, when restricted to $C_c^{\infty}(\mbb{R}^d)$, the generator of $(X_t)_{t \geq 0}$, we have $A(C_c^{\infty}(\mbb{R}^d)) \subseteq C(\mbb{R}^d)$, and therefore \eqref{appen-eq4} shows that $x \mapsto q(x,\xi)$ is continuous for all $\xi \in \mbb{R}^d$. Now the assertion follows from Theorem~\ref{appen-1} and Remark~\ref{appen-2}.
\end{proof}

Corollary~\ref{appen-3} is crucial for the proof of Lemma~\ref{time-9} since it shows that the truncated symbol $q_R$ is continuous with respect to the space variable $x$.

\begin{kor} \label{appen-3}
	Let $(q(x,\cdot))_{x \in \mbb{R}^d}$ be a family of continuous negative definite functions, \begin{equation*}
		q(x,\xi) =- i b(x) \xi + \frac{1}{2} \xi \cdot Q(x) \xi + \int_{y \neq 0} (1-e^{iy \cdot \xi}+iy \cdot \xi \I_{(0,1)}(|y|)) \, \nu(x,dy),
	\end{equation*}
	such that $x \mapsto q(x,\xi)$ is continuous for all $\xi \in \mbb{R}^d$ and $q$ is locally bounded. Let $R: \mbb{R} \to [1,\infty)$ and $\chi: \mbb{R} \to [0,1]$ be continuous mappings such that $\I_{B(0,1)} \leq \chi \leq \I_{B(0,2)}$. Then \begin{equation*}
			q_R(x,\xi) := -i b(x) \cdot \xi + \frac{1}{2} \xi \cdot Q(x) \xi  + \int_{y \neq 0} \chi \left( \frac{y}{R(x)} \right) (1-e^{iy \cdot \xi}+iy \cdot \xi \I_{(0,1)}(|y|)) \, \nu(x,dy) 
		\end{equation*}
		is continuous with respect to $x$.
\end{kor}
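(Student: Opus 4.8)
The plan is to deduce the statement from Theorem~\ref{appen-1} and Remark~\ref{appen-2}. Since $x \mapsto q(x,\xi)$ is continuous and $q$ is locally bounded, condition (i) of Theorem~\ref{appen-1} is in force (here $q(x,0)=0$), and by Remark~\ref{appen-2} the implication (i) $\implies$ (iii)(a)--(c) remains valid for the Borel cut-off $\I_{(0,1)}(|\cdot|)$ and in the presence of the diffusion term $Q$. Thus $b$ is continuous, $\nu(\cdot,dy)$ is vaguely continuous on $\mbb{R}^d \setminus \{0\}$, and $\sup_{x \in K} \nu(x,B(0,M)^c) \xrightarrow[]{M \to \infty} 0$ for every compact set $K \subseteq \mbb{R}^d$. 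Moreover, local boundedness of $q$ gives $\sup_{x \in K} \nu(x,B(0,1)^c)<\infty$ for every compact $K$ (estimate the real part of $q$, cf.\ \eqref{appen-eq5}).

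Since $q(\cdot,\xi)$ is continuous by hypothesis, it suffices to prove that $g_R(\cdot,\xi):= q(\cdot,\xi)-q_R(\cdot,\xi)$ is continuous for each fixed $\xi \in \mbb{R}^d$. In this difference the drift and diffusion terms of $q$ and $q_R$ cancel, and since $R(x) \geq 1$ and $\I_{B(0,1)} \leq \chi$, the factor $1-\chi(y/R(x))$ vanishes on $\{|y| \leq R(x)\} \supseteq \{|y|<1\}$; hence
\[
	g_R(x,\xi) = \int_{\{|y| \geq 1\}} \Bigl(1-\chi\bigl(\tfrac{y}{R(x)}\bigr)\Bigr)\bigl(1-e^{iy \cdot \xi}\bigr) \, \nu(x,dy),
\]
an absolutely convergent integral whose integrand is bounded by $2$; in particular the compensator of $q$ plays no role here.

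Finally I would prove continuity of $g_R(\cdot,\xi)$ along an arbitrary sequence $x_n \to x_0$. Set $K := \{x_0\} \cup \overline{\{x_n : n \in \mbb{N}\}}$, which is compact, and $F_x(y):= (1-\chi(y/R(x)))(1-e^{iy \cdot \xi})$, so that $g_R(x,\xi)=\int_{|y| \geq 1} F_x \, d\nu(x,\cdot)$. I would split
\[
	|g_R(x_n,\xi)-g_R(x_0,\xi)| \leq \int_{|y| \geq 1} |F_{x_n}-F_{x_0}| \, d\nu(x_n,\cdot) + \Bigl| \int_{|y| \geq 1} F_{x_0} \, d\nu(x_n,\cdot) - \int_{|y| \geq 1} F_{x_0} \, d\nu(x_0,\cdot) \Bigr|.
\]
For the first summand, continuity of $R$ keeps $R(x_n)$ in a bounded interval, so by uniform continuity and compact support of $\chi$ one has $|\chi(\cdot/R(x_n)) - \chi(\cdot/R(x_0))| \leq \eps_n$ with $\eps_n \to 0$ and common support in a fixed ball $\{|y| \leq 2R_0\}$; hence $|F_{x_n}-F_{x_0}| \leq 2\eps_n \I_{\{1 \leq |y| \leq 2R_0\}}$ and the summand is bounded by $2\eps_n \sup_{z \in K}\nu(z,B(0,1)^c) \to 0$. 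The second summand is the main obstacle: $F_{x_0}$ is continuous and supported away from the origin but does \emph{not} vanish at infinity (it equals $1-e^{iy \cdot \xi}$ for $|y|$ large), so vague continuity of $\nu$ is not directly applicable. I would deal with it by an $\eps/3$ argument: cut $F_{x_0}$ against a cut-off $\phi_M \in C_c(\mbb{R}^d \setminus \{0\})$ with $\phi_M \equiv 1$ on $\{1 \leq |y| \leq M\}$, apply vague continuity (condition (iii)(b)) to the compactly supported piece $F_{x_0}\phi_M$, and estimate the tail by $\bigl|\int F_{x_0}(1-\phi_M) \, d\nu(x,\cdot)\bigr| \leq 2\sup_{z \in K}\nu(z,B(0,M)^c)$ uniformly in $x \in K$, which tends to $0$ as $M \to \infty$ by (iii)(c). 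Combining the two estimates gives $g_R(x_n,\xi) \to g_R(x_0,\xi)$, and therefore $q_R(\cdot,\xi)=q(\cdot,\xi)-g_R(\cdot,\xi)$ is continuous for every $\xi \in \mbb{R}^d$.
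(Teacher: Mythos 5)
Your proposal is correct and follows essentially the same route as the paper: reduce to the continuity of the difference $q(\cdot,\xi)-q_R(\cdot,\xi)=\int_{|y|\geq 1}(1-\chi(y/R(\cdot)))(1-e^{iy\cdot\xi})\,\nu(\cdot,dy)$, obtain vague continuity of $\nu(\cdot,dy)$ and the uniform tail condition from Theorem~\ref{appen-1} and Remark~\ref{appen-2}, and then split into a term where only $\chi(y/R(\cdot))$ varies (controlled by uniform continuity of $\chi$, boundedness of $R$ on compacts and \eqref{appen-eq5}) and a term where only the measure varies (controlled by vague continuity after truncating, with the tail killed uniformly by (iii)(c)). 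The only differences from the paper's proof are bookkeeping choices (which measure carries the $\chi$-difference, and a single cut-off $\phi_M$ versus the paper's $\varphi_k$-truncation of both terms), so the argument is sound as written.
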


Note that the assumption of local boundedness is, in particular, satisfied if there exists a rich Feller process with symbol $q$, cf.\ \cite[Proposition 2.27(d)]{ltp}.

\begin{proof}[Proof of Corollary~\ref{appen-3}]
	Fix $\xi \in \mbb{R}^d$. Since $q(\cdot,\xi)$ is continuous, it suffices to show that \begin{equation*}
		f(x) := q_R(x,\xi)-q(x,\xi) = \int_{|y| \geq 1} \left[ \chi \left( \frac{y}{R(x)} \right)-1 \right] (1-e^{iy \cdot \xi}) \, \nu(x,dy)
	\end{equation*}
	is continuous. It follows from Theorem~\ref{appen-1} and Remark~\ref{appen-2} that $\nu$ satisfies \ref{appen-1}(iii)(b),(c). By the triangle inequality, $|f(x)-f(z)| \leq I_1+I_2$ where \begin{align*}
		I_1 &:= \left| \int_{|y| \geq 1} \left[ \chi \left( \frac{y}{R(x)} \right)- \chi \left( \frac{y}{R(z)} \right) \right] (1-e^{iy \cdot \xi}) \, \nu(z,dy) \right| \\
		I_2 &:= \left|\int_{|y| \geq 1} \left[ \chi \left( \frac{y}{R(x)} \right)-1 \right] (1-e^{iy \cdot \xi}) \, \nu(x,dy) - \int_{|y| \geq 1} \left[ \chi \left( \frac{y}{R(x)} \right)-1 \right] (1-e^{iy \cdot \xi}) \, \nu(z,dy) \right|.
	\end{align*}
	Choose cut-off functions $\varphi_k \in C_c(\mbb{R}^d)$ such that $\I_{B(0,k)} \leq \varphi_k \leq \I_{B(0,2k)}$. Then $I_2$ is bounded above by  \begin{align*}
		&\left|\int_{|y| \geq 1} \left[ \chi \left( \frac{y}{R(x)} \right)-1 \right] \varphi_k(y) (1-e^{iy \cdot \xi}) \, \nu(x,dy) - \int_{|y| \geq 1} \left[ \chi \left( \frac{y}{R(x)} \right)-1 \right] \varphi_k(y) (1-e^{iy \cdot \xi}) \, \nu(z,dy) \right| \\
		&\quad + 4 \int_{|y| \geq k} \nu(x,dy) + 4 \int_{|y| \geq k} \, \nu(z,dy).
	\end{align*}
	Letting first $z \to x$ and then $k \to \infty$ it follows from \ref{appen-1}(iii)(b) and \ref{appen-1}(iii)(c) that $I_2 \to 0$. Similarly, \begin{align*}
		I_1
		&\leq \int_{1 \leq |y| \leq k} \left| \chi \left( \frac{y}{R(x)} \right)- \chi \left( \frac{y}{R(z)} \right) \right| \, \nu(z,dy)  + 4 \int_{|y| \geq k} \nu(z,dy).
	\end{align*}
	Using the local boundedness of $q$, cf.\ \eqref{appen-eq5}, and the fact that $\chi$ is uniformly continuous, it follows easily that the first term on the right-hand side converges to $0$ as $z \to x$. The second term converges to $0$ as $k \to \infty$ uniformly on compact sets. Letting first $z \to x$ and then $k \to \infty$ we get $I_1 \to 0$.
\end{proof}

\begin{ack}
	I am grateful to Ren\'e Schilling for helpful comments and discussions.
\end{ack}

\end{document}